\documentclass[reqno, 11pt, a4paper]{amsart}
\usepackage{amsmath, amssymb, amsthm}  
\numberwithin{equation}{section}

\usepackage{appendix}
\usepackage{graphicx}
\usepackage[table]{xcolor}
\usepackage{enumitem}
\usepackage{fancyhdr}
\usepackage{mathrsfs}
\usepackage[truedimen,top=3truecm, bottom=3truecm, left=2.5truecm, right=2.5truecm, includefoot]{geometry}
\usepackage{dsfont} 
\usepackage{pifont}
\usepackage{fontawesome}

\usepackage{mathtools}
\usepackage{mathdots}

\usepackage{tikz}
\usetikzlibrary{shapes,arrows,positioning,fit,backgrounds,calc}
\usepackage{pgfplots}

\usepackage{diagbox}
\usepackage{makecell}

\definecolor{bluelinks}{RGB}{51,0,204}
\definecolor{greenlinks}{RGB}{51,204,102}

\usepackage{aliascnt}
\usepackage[colorlinks=true,linkcolor=bluelinks,citecolor=greenlinks]{hyperref}
\usepackage[capitalise,noabbrev,nameinlink]{cleveref}

\theoremstyle{plain}
\newtheorem{theorem}{Theorem}[section]

\newaliascnt{proposition}{theorem}
\newtheorem{proposition}[proposition]{Proposition}
\aliascntresetthe{proposition}

\newaliascnt{lemma}{theorem}
\newtheorem{lemma}[lemma]{Lemma}
\aliascntresetthe{lemma}

\newaliascnt{corollary}{theorem}
\newtheorem{corollary}[corollary]{Corollary}
\aliascntresetthe{corollary}

\theoremstyle{definition}

\newaliascnt{definition}{theorem}
\newtheorem{definition}[definition]{Definition}
\aliascntresetthe{definition}

\theoremstyle{remark}

\newaliascnt{remark}{theorem}
\newtheorem{remark}[remark]{Remark}
\aliascntresetthe{remark}

\renewcommand{\qedsymbol}{$\blacksquare$}

\crefname{theorem}{Theorem}{Theorems}
\crefname{proposition}{Proposition}{Propositions}
\crefname{lemma}{Lemma}{Lemmas}
\crefname{corollary}{Corollary}{Corollaries}
\crefname{definition}{Definition}{Definitions}
\crefname{remark}{Remark}{Remarks}
\crefname{appendix}{Appendix}{Appendices}

\newcommand{\N}{\mathbb{N}}
\newcommand{\Z}{\mathbb{Z}}
\newcommand{\R}{\mathbb{R}}

\newcommand{\p}{\mathbb{P}}
\newcommand{\E}{\mathbb{E}}
\newcommand{\ind}{\mathds{1}}
\newcommand{\diff}{\mathrm{d}}
\renewcommand{\bar}{\overline}

\newcommand{\tnorm}[1]{\left|\!\left|\!\left|#1\right|\!\right|\!\right|}
\renewcommand{\ge}{\geqslant}
\renewcommand{\le}{\leqslant}

\title[Dynamical and stationary fluctuations for a boundary-driven EPE]{Dynamical and stationary non-equilibrium fluctuations for a boundary-driven exclusion process with energy}

\author[H. Da Cunha]{Hugo Da Cunha}
\address{Hugo Da Cunha -- Université Lyon 1, Centrale Lyon, INSA Lyon, Université Jean Monnet, CNRS, ICJ UMR5208, 69622 Villeurbanne, France}
\email{\href{mailto:dacunha@math.univ-lyon1.fr}{dacunha@math.univ-lyon1.fr}}

\keywords{Non-equilibrium fluctuations, Boundary-driven interacting particle systems, Exclusion process, Ornstein-Uhlenbeck process, Multiple conservation laws}
\subjclass[2020]{60F05, 60G10, 60H15, 60J27, 60K35, 82C22}

\begin{document}
\begin{abstract}
    We consider the exclusion process with energy, a model with mass and energy conservation, and we put it in a boundary-driven setting. We are interested in its hydrodynamic and hydrostatic limits, together with the associated dynamical and stationary fluctuations. We provide a new proof for the derivation of dynamical fluctuations that does not require explicit estimates on the two-point correlation functions. While the hydrodynamic limit is given by two uncoupled heat equations, the dynamical fluctuations are described by a system of coupled generalized Ornstein-Uhlenbeck processes.
\end{abstract}

\maketitle
\section{Introduction}

A challenging problem in non-equilibrium statistical mechanics is to derive macroscopic transport equations and fluctuations directly from microscopic dynamics. Boundary-driven interacting particle systems are a natural framework to study this problem. When the boundary reservoirs impose different densities, a persistent macroscopic current is induced and the system is driven out of equilibrium. Unlike equilibrium systems, the stationary state of non-equilibrium systems generally exhibits correlations over macroscopic distances. These correlations were first identified in \cite{spohn_long_1983} and further investigated in e.g.\cite{derrida_non-equilibrium_2007,bertini_long_2007,derrida_free_2001}.

\medskip

The simplest boundary-driven interacting particle system is the symmetric simple exclusion process (SSEP) with boundaries, extensively studied in the literature \cite{baldasso_exclusion_2017,goncalves_IHP_2019,goncalves_non-equilibrium_2020,goncalves_hydrodynamics_2023,landim_stationary_2008}. In this model, particles perform symmetric random walks on a one-dimensional lattice with the constraint that each site can be occupied by at most one particle. At the two ends of the lattice, particles are added or removed by boundary reservoirs with different densities, and whose interactions are tuned by a factor $N^{-\theta}$ for a parameter $\theta\ge 0$. The hydrodynamic limit and non-equilibrium stationary state (NESS) of this model were investigated in \cite{baldasso_exclusion_2017}. Dynamical and stationary fluctuations for this model were established first in \cite{landim_stationary_2008} for $\theta =0$, then in \cite{franco_non-equilibrium_2019} for $\theta =1$ and later in \cite{goncalves_non-equilibrium_2020} for general $\theta\ge 0$. Each of these works relies heavily on obtaining a suitable bound on the two-point correlation function of the system, which is a challenging task. A classical way to do so is to derive a discrete equation for the correlation function and then estimate its solution with random walk techniques.

\medskip

Most of the work on boundary-driven interacting particle systems has focused on models with a single conserved quantity like the SSEP, typically the particle density. However, many physical systems have multiple conserved quantities that interact with each other. Multi-species exclusion processes are an instance of such systems. We mention the ABC model whose hydrodynamic behaviour in the boundary-driven setting has been studied in \cite{goncalves_hydrodynamics_2023}, for which no fluctuation results have been established yet. Only very recently have fluctuation results been obtained for multi-species boundary-driven exclusion processes. In \cite{chen_non-equilibrium_2026}, the authors investigate a two-species exclusion process with slow boundaries and species conversion. They establish dynamical fluctuations for this model, also by obtaining a bound on the two-point correlation functions which in that case are no longer scalar, but have four components. In this paper, we also consider a two-component boundary-driven exclusion process, but it is of different nature: its second conserved quantity is not a second species of particles, but an energy carried by the particles themselves.

\medskip

The model we consider is the symmetric Exclusion Process with Energy (EPE), recently introduced in \cite{da_cunha_stationary_2026} without boundaries, that we extend here to the boundary-driven setting. In this model, particles evolve according to the SSEP dynamics, but also carry an amount of energy given by an integer between $1$ and $\kappa\ge 2$. When two particles are adjacent, they can exchange energy unit by unit. Reservoirs of particles and energy are attached at the boundaries of the system, and they can inject or remove particles and energy with a speed tuned by a factor~$N^{-\theta}$. The dynamics is designed to ensure that the model is gradient, \textit{i.e.}~that the instantaneous currents of particles and energy can be written in gradient form (a kind of macroscopic Fick's law is satisfied), even at the boundaries. In the present case, the gradient conditions are very simple as the currents are respectively proportional to the discrete gradients of the particle and energy densities. Even though this property makes the model particularly tractable, estimates on the two-point correlation functions are still challenging to obtain as equations for particle-energy and energy-energy correlations do not close. Therefore, we need a new method to derive non-equilibrium and stationary fluctuations for this model.

\medskip

At the macroscopic level and in the diffusive timescale, this system is described by the particle and energy densities $\rho (t,u)$ and $\mathcal{E}(t,u)$. They respectively satisfy the heat equation with diffusion coefficient $\kappa -1$, with boundary conditions of Dirichlet type if $\theta <1$, Robin type if $\theta =1$ and Neumann type if $\theta >1$. Although the hydrodynamic equations are uncoupled, the fluctuation fields of particles and energy are heavily coupled. The hydrodynamic limit is stated in the article, but not proved as it follows the same strategy as the one of the SSEP \cite{baldasso_exclusion_2017,goncalves_IHP_2019}. Our first result (\cref{thm:dynamical_fluctuations}) establishes the convergence of the joint density-energy fluctuation field to a generalized Ornstein-Uhlenbeck process that can be formally written as the solution of the following SPDE
\begin{equation*}
    \partial_t \mathcal{Y}_t = (\kappa -1)\Delta \mathcal{Y}_t + \nabla\big( \sqrt{2(\kappa -1)\chi(\rho_t,\mathcal{E}_t)}\dot{\mathcal{W}}_t\big)
\end{equation*}
where $\chi (\rho ,\mathcal{E})$ is the compressibility matrix of the system, and $\dot{\mathcal(W)}_t$ is a two-dimensional space-time white noise. In the Robin case $\theta =1$, some additional boundary terms arise. This result is obtained without requiring any precise estimate on the two-point correlation functions, and this is one of the main contributions of this paper. Instead, we rely on a variance estimate that we assume at initial time and is propagated along the dynamics (see \ref{eq:assumption_variance_estimate}) thanks to the very simple gradient property of the model. In particular, this relaxes the assumptions on the initial measure imposed in \cite{franco_non-equilibrium_2019,goncalves_non-equilibrium_2020} and allows to consider initial states associated to rougher macroscopic profiles, and where less information about the correlations is available.

\medskip

We also investigate the stationary fluctuations of this model, \emph{i.e.}~the fluctuations of the system started from its NESS. In \cref{thm:stationary_fluctuations}, for $\theta\le 1$, we prove that the stationary fluctuation field converges to a Gaussian field whose covariance decomposes into a local-equilibrium and a non-equilibrium part. At the critical value $\theta =1$, additional boundary terms emerge for the energy-energy correlations. In the case $\theta >1$, the stationary fluctuation decompose into a Gaussian field and a random vector determined by the fluctuations of the total number of particles and energy, see \cref{thm:stationary_fluctuations_neumann}. We also establish the hydrostatic limit of the model, for which the mere variance estimate is sufficient rather than stationary correlation estimates.

\medskip

We emphasize that if we forget about the energy and only consider the particle density, then the model reduces to the SSEP with boundaries. Therefore, our results must be consistent with the ones obtained in \cite{franco_non-equilibrium_2019,goncalves_non-equilibrium_2020} for the SSEP. We say a word about this in \cref{appendix:robin_boundary_matrices}. Moreover, setting $\kappa =2$ and making some linear combinations of the conserved quantities in the EPE model, we recover the symmetric ABC model. In particular, dynamical and stationary fluctuations for the symmetric ABC model with slow boundary, which was not yet available in the literature, can be deduced from our results. 

\medskip

Let us say a word about the proof strategy. For dynamical fluctuations, we first prove tightness of the fluctuation fields relying on the variance estimate mentioned above, and Dynkin's martingale decomposition. Then, we identify the limit points as the solution of the martingale problem associated to the generalized Ornstein-Uhlenbeck process. To do so, we need a local-equilibrium replacement lemma that is uniform over space, including at the boundaries. The proof of the local-equilibrium property relies on spectral gap estimate and equivalence of ensembles already established for the EPE in \cite{da_cunha_stationary_2026}. For stationary fluctuations, we first prove that the NESS of this model satisfies all the required assumptions to apply the dynamical fluctuations result. This allows to obtain a characterization of the stationary fluctuation field, and then we compute its covariance by taking large time limits in the dynamical covariance.

\subsection*{Outline of the paper}

\cref{sec:definition_model} introduces the EPE model, and its generalization to the boundary-driven case. In \cref{sec:main_results}, we state the main results of the paper, namely the hydrodynamic and hydrostatic limits, and the dynamical and stationary fluctuations. We introduce in \cref{sec:dynkin_martingale} the martingales that will be useful throughout the paper. \cref{sec:tightness} is devoted to the propagation of the variance estimate along the dynamics, and to the proof of tightness of the fluctuation fields, while \cref{sec:identification} identifies the limit points of the fluctuation fields as a generalized Ornstein-Uhlenbeck process. Stationary fluctuations are investigated in \cref{sec:stationary_fluctuations}. In \cref{sec:local_equilibrium}, we prove the local-equilibrium property, which is a key ingredient in the proof of dynamical fluctuations. Finally, we conclude with an \cref{sec:appendix_OU} about definition and uniqueness of the generalized Ornstein-Uhlenbeck process, and an \cref{appendix:robin_boundary_matrices} about the Robin boundary matrices that appear in the critical case $\theta =1$, together with some discussion about the consistency with the existing literature on the SSEP.

\subsection*{General notations}

\begin{itemize}
    \item Elements of $\R^n$ are represented by column vectors. For any $\mathbf{x}\in\R^n$, we denote by $\mathbf{x}^\dagger$ its transpose, and by $|\mathbf{x}|$ its Euclidean norm.
    \item If $m$ is a measure and $f\in L^2(m)$, we sometimes denote by $\langle m,f\rangle$ and sometimes by~$m(f)$ the integral of $f$ with respect to $m$. We also denote by $\langle \cdot ,\cdot\rangle_m$ the inner product in $L^2(m)$.
    \item We denote by $\mathcal{D}([0,T],E)$ the Skorokhod space of càdlàg (\textit{i.e.}~ right-continuous with left limits) functions from $[0,T]$ to a topological space $E$, and by $C([0,T],E)$ the space of continuous functions from $[0,T]$ to $E$.
    \item If $\Lambda$ is a finite set, we denote by $|\Lambda|$ its cardinality.
    \item For non-negative sequences $(u_k)_{k\in\N}$ and $(v_k)_{k\in\N}$, we write $v_k=O(u_k)$ if $|v_k|\le Cu_k$ for some constant $C>0$ that is independent of $k$ but may depend on other parameters.
    \item Throughout, we denote by $C$ a generic positive constant that may change from line to line. When necessary, we emphasize the dependence of $C$ on some parameters by writing~$C=C(\cdot)$.
\end{itemize}

\section{Definition of the model}
\label{sec:definition_model}

\subsection{Bulk dynamics}

Consider the lattice $\Lambda_N = \{1,\hdots ,N-1\}$ that we refer to as the \emph{bulk}, and let $\kappa \ge 2$ be an integer. We start by introducing the symmetric Exclusion Process with Energy (EPE). In this model, we put at most one particle per site on $\Lambda_N$, and each particle carries an amount of energy given by an integer between $1$ and $\kappa$. In other words, we consider \emph{configurations}~$\eta =(\eta_x)_{x\in\Lambda_N}$ in the state space $\Omega_N\coloneq \{0,1,\hdots ,\kappa\}^{\Lambda_N}$, where $\eta_x = 0$ means that there is no particle at site~$x$, and $\eta_x = k$ for $k\in\{1,\hdots ,\kappa\}$ means that there is a particle at site~$x$ with energy $k$. For any~$x\in\Lambda_N$, define the variable $\xi_x=\xi_x(\eta )=\ind_{\{\eta_x\ge 1\}}$ that indicates whether there is a particle at site $x$ or not. Then, we let the particles evolve on the bulk according to the Markovian dynamics generated by the operator~$\mathscr{L}_0$ that acts on functions~$f:\Omega_N\longrightarrow\R$ through the formula 
\begin{equation}\label{def:generator_bulk}
    \mathscr{L}_0f(\eta ) = \sum_{x=1}^{N-2} c_{x,x+1}^p(\eta )\big[ f(\eta^{x,x+1})-f(\eta )\big] + \sum_{x=1}^{N-2} \sum_{|y-x|=1} c_{x\to y}^e(\eta ) \big[ f(\eta^{x\to y})-f(\eta )\big]
\end{equation}
for any $\eta\in\Omega_N$. In the above formula, the configuration $\eta^{x,y}$ stands for the configuration obtained from $\eta$ by exchanging the values of $\eta_x$ and $\eta_y$, while $\eta^{x\to y}$ is the configuration obtained from $\eta$ by transferring one unit of energy from site $x$ to site $y$ whenever possible, namely
\begin{equation}\label{eq:transformedconfig}
    \eta_z^{x,y}=\begin{cases}
        \eta_y & \mbox{ if }z=x,\\
        \eta_x & \mbox{ if }z=y,\\
        \eta_z & \mbox{ otherwise},
    \end{cases}
    \qquad\mbox{ and }\qquad \eta_z^{x\to y}=\begin{cases}
        \eta_x-1 & \mbox{ if }z=x,\\
        \eta_y+1 & \mbox{ if }z=y,\\
        \eta_z & \mbox{ otherwise}.
    \end{cases}
\end{equation}
In this dynamics, there are two types of exchanges: the first one is the jump of particles between neighbouring sites with rate $c_{x,x+1}^p(\eta )$, and the second one is the exchange of energy (unit by unit) between neighbouring particles with rate $c_{x,y}^e(\eta )$. We choose these rates to be given by
\begin{subequations}\label{eq:rates}
\begin{equation}\label{eq:rate_particle}
    c_{x,x+1}^p(\eta ) = (\kappa -1)\big[ \xi_x(1-\xi_{x+1}) + (1-\xi_x)\xi_{x+1}\big],
\end{equation}
\begin{equation}
    c_{x\to y}^e(\eta ) = \xi_x\xi_y (\eta_x-1)(\kappa -\eta_y)= (\eta_x-\xi_x)(\kappa\xi_y-\eta_y),
\end{equation}
\end{subequations}
for any $x,y\in\Lambda_N$ and $\eta\in \Omega_N$, where the last identity comes from the fact that $\xi_x\eta_x=\eta_x$. The choice of these rates is made to ensure that the model belongs to the class of \emph{gradient systems}, meaning the instantaneous currents of particles $j_{x,x+1}^p(\eta )$ and of energy $j_{x,x+1}^e(\eta )$ through a bond~$\{x,x+1\}\subset\Lambda_N$, respectively defined by
\begin{equation*}
    j_{x,x+1}^p(\eta ) = c_{x,x+1}^p(\eta )(\xi_x-\xi_{x+1}),
\end{equation*}
\begin{equation*}
    j_{x,x+1}^e(\eta ) = c_{x,x+1}^p(\eta )(\eta_x-\eta_{x+1}) + c_{x\to x+1}^e(\eta )-c_{x+1\to x}^e(\eta ),
\end{equation*}
can be rewritten as the discrete gradients
\begin{equation}\label{eq:gradient_currents_bulk}
    j_{x,x+1}^p(\eta ) = (\kappa -1) (\xi_x-\xi_{x+1})\qquad\mbox{ and }\qquad j_{x,x+1}^e(\eta )=(\kappa -1)(\eta_x-\eta_{x+1}).
\end{equation}
We emphasize that the rate $\kappa -1$ for the jumps of SSEP prescribed in \eqref{eq:rate_particle} is fundamental to ensure the gradient property.

\medskip

Notice that this dynamics preserves both the total number of particles $\sum\xi_x$ and the total energy $\sum\eta_x$. Moreover, they necessarily satisfy the following bounds
\begin{equation*}
    \sum_x\xi_x\le\sum_x\eta_x\le\kappa\sum_x\xi_x.
\end{equation*}
It leads us to define the set of admissible values for the particle and energy densities, namely the triangle
\begin{equation}\label{def:triangle}
    \mathcal{T}_\kappa = \big\{ (\rho ,\mathcal{E}) \; : \; 0\le \rho\le 1\mbox{ and }\rho\le \mathcal{E}\le\kappa\rho\big\},
\end{equation}
and let $\mathring{\mathcal{T}}_\kappa$ be its interior.

\medskip

The generator $\mathscr{L}_0$ admits a family of invariant (reversible) measures, that we define hereafter. For the proof of the reversibility, we refer to \cite[Proposition 3]{da_cunha_stationary_2026}.

\begin{definition}[Invariant measures]\label{defin:invariant_measures}
    Let $(\rho,\mathcal{E})\in\mathcal{T}_\kappa$ corresponding to particle and energy densities, respectively. When $\rho >0$, define the marginal $\nu_{\rho ,\mathcal{E}}$ to be the probability measure on~$\{0,1,\hdots ,\kappa\}$ under which:
    \begin{itemize}
        \item the variable $\xi_x$ follows a Bernoulli distribution of parameter $\rho$,
        \item conditionally on $\xi_x=1$, the variable $\eta_x-1$ follows a Binomial distribution of parameters $\kappa -1$ and
        \begin{equation}\label{def:binomial_parameter}
            \mathfrak{p} = \mathfrak{p}(\rho ,\mathcal{E}) = \frac{\frac{\mathcal{E}}{\rho}-1}{\kappa -1}\in [0,1].
        \end{equation}
    \end{itemize}
    Namely,
    \begin{equation*}
        \nu_{\rho ,\mathcal{E}}(\eta_x=0)=1-\rho \qquad\mbox{ and }\qquad \nu_{\rho ,\mathcal{E}}(\eta_x=\ell ) = \rho {{\kappa -1}\choose{\ell -1}}\mathfrak{p}^{\ell -1}(1-\mathfrak{p})^{\kappa -\ell} \mbox{ for }1\le \ell\le\kappa.
    \end{equation*}
    When $\rho =0$, we define $\nu_{0,0}$ to be the Dirac measure concentrated on $0$. Then, for any $N\ge 1$, define the product measure on $\Omega_N$ given by $\nu_{\rho,\mathcal{E}}^N \coloneq \nu_{\rho ,\mathcal{E}}^{\otimes\Lambda_N}$. One can check that for any $(\rho,\mathcal{E})\in\mathcal{T}_\kappa$, the measure $\nu_{\rho ,\mathcal{E}}^N$ is reversible with respect to the generator $\mathscr{L}_0$.
\end{definition}

The definition of the invariant measures will be useful to define the boundary dynamics, which we introduce in the next section.

\subsection{Boundary dynamics}

At the boundaries of the bulk, we attach reservoirs that can inject or remove particles and energy. For this, we fix parameters $(\rho_\ell ,\mathcal{E}_\ell)\in \mathcal{T}_\kappa$ that stand respectively for the particle and energy densities of the left reservoir (at site $x=0$). Similarly, let $(\rho_r,\mathcal{E}_r)\in\mathcal{T}_\kappa$ stand for the right reservoir (at site $x=N$) densities. We describe the dynamics that we impose on the left boundary, but the one on the right boundary is defined in a similar way with the corresponding parameters. The dynamics is as follows:
\begin{itemize}
    \item If site $x=1$ is occupied by a particle (\textit{i.e.}~$\xi_1=1$), then the reservoir can absorb it at rate $(\kappa -1)(1-\rho_\ell )$. It can also inject one energy unit to it with rate $(\mathcal{E}_\ell -\rho_\ell )(\kappa -\eta_1)$, and absorb one energy unit from it with rate $(\kappa\rho_\ell-\mathcal{E}_\ell )(\eta_1-1)$.
    \item If site $x=1$ is empty (\textit{i.e.}~$\xi_1=0$), then the reservoir can inject a particle on it at rate~$(\kappa -1)\rho_\ell$, and the energy of the injected particle is sampled according to the distribution~$1+\mathrm{Bin}(\kappa -1, \mathfrak{p}_\ell)$ with $\mathfrak{p}_\ell \coloneq\mathfrak{p} (\rho_\ell, \mathcal{E}_\ell)$ defined in \eqref{def:binomial_parameter}.
\end{itemize}

The infinitesimal generator $\mathscr{L}_\ell$ of this dynamics acts on functions $f:\Omega_N\longrightarrow\R$ via
\begin{multline}
    \mathscr{L}_\ell f(\eta )= (\kappa -1)(1-\rho_\ell )\xi_1 \big[ f(\eta^{1;0})-f(\eta )\big] \\
    + (\kappa -1)\rho_\ell (1-\xi_1) \sum_{j=1}^\kappa {{\kappa -1}\choose{j-1}}\mathfrak{p}_\ell^{j-1}(1-\mathfrak{p}_\ell)^{\kappa -j}\big[ f(\eta^{1;j})-f(\eta )\big] \\
    + (\mathcal{E}_\ell-\rho_\ell)(\kappa\xi_1-\eta_1) \big[ f(\eta^{1,+})-f(\eta )\big] + (\kappa\rho_\ell-\mathcal{E}_\ell)(\eta_1-\xi_1)\big[ f(\eta^{1,-})-f(\eta )\big].
\end{multline}
In the above formula, the configuration $\eta^{x;j}$ is the configuration obtained from $\eta$ by replacing the value of $\eta_x$ by $j$, and $\eta^{x,\pm}$ is the configuration obtained from $\eta$ after replacing $\eta_x$ by $\eta_x\pm 1$ whenever possible, namely
\begin{equation*}
    \eta_z^{x;j}=\begin{cases}
        j & \mbox{ if }z=x,\\
        \eta_z & \mbox{ otherwise},
    \end{cases}
    \qquad\mbox{ and }\qquad \eta_z^{x,\pm}=\begin{cases}
        \eta_x\pm 1 & \mbox{ if }z=x,\\
        \eta_z & \mbox{ otherwise}.
    \end{cases}
\end{equation*}
Similarly, the infinitesimal generator $\mathscr{L}_r$ for the right boundary acts on func\-tions~$f:\Omega_N\longrightarrow\R$ via
\begin{multline}
    \mathscr{L}_r f(\eta )= (\kappa -1)(1-\rho_r )\xi_{N-1} \big[ f(\eta^{N-1;0})-f(\eta )\big] \\
    + (\kappa -1)\rho_r (1-\xi_{N-1}) \sum_{j=1}^\kappa {{\kappa -1}\choose{j-1}}\mathfrak{p}_r^{j-1}(1-\mathfrak{p}_r)^{\kappa -j}\big[ f(\eta^{N-1;j})-f(\eta )\big] \\
    + (\mathcal{E}_r-\rho_r)(\kappa\xi_{N-1}-\eta_{N-1}) \big[ f(\eta^{N-1,+})-f(\eta )\big] + (\kappa\rho_r-\mathcal{E}_r)(\eta_{N-1}-\xi_{N-1})\big[ f(\eta^{N-1,-})-f(\eta )\big].
\end{multline}

Finally, we take a parameter $\theta\ge 0$ ruling the speed of interactions between the bulk and the reservoirs, and we define the full generator of the dynamics as
\begin{equation}\label{def:generator_full}
    \mathscr{L}_N \coloneq \mathscr{L}_0 +\frac{1}{N^\theta}\big( \mathscr{L}_\ell + \mathscr{L}_r\big).
\end{equation}
Thanks to the gradient condition \eqref{eq:gradient_currents_bulk}, one can check that for any $x\in\Lambda_N$, we have the local conservation laws
\begin{equation}\label{eq:local_conservation_laws}
    \mathscr{L}_N\xi_x = j_{x-1,x}^p(\eta )-j_{x,x+1}^p(\eta ) \qquad\mbox{ and }\qquad \mathscr{L}_N\eta_x = j_{x-1,x}^e(\eta )-j_{x,x+1}^e(\eta ),
\end{equation}
where the instantaneous currents at the boundaries are given by
\begin{equation}\label{eq:boundary_particle_current}
    j_{0,1}^p (\eta ) = \frac{\kappa -1}{N^\theta} (\rho_\ell -\xi_1) ,\qquad j_{N-1,N}^p (\eta )= \frac{\kappa -1}{N^\theta} (\xi_{N-1}-\rho_r)
\end{equation}
and
\begin{equation}\label{eq:boundary_energy_current}
    j_{0,1}^e (\eta ) = \frac{\kappa -1}{N^\theta} (\mathcal{E}_\ell -\eta_1) ,\qquad j_{N-1,N}^e (\eta )= \frac{\kappa -1}{N^\theta} (\eta_{N-1}-\mathcal{E}_r).
\end{equation}
Set $\bar{\Lambda}_N = \Lambda_N\cup\{0,N\}$. By convention, we extend $\xi$ and $\eta$ to $\bar{\Lambda}_N$ by setting
\begin{equation}\label{eq:conventions_boundary_values}
    \xi_0\equiv\rho_\ell ,\quad \xi_N\equiv\rho_r,\quad \eta_0\equiv\mathcal{E}_\ell\quad\mbox{ and }\quad \eta_N\equiv\mathcal{E}_r.
\end{equation} 

For simplicity, in what follows we will assume that $(\rho_\ell ,\mathcal{E}_\ell),(\rho_r,\mathcal{E}_r)\in \mathring{\mathcal{T}}_\kappa$, so that the boundary dynamics is not degenerate.

\medskip

Fix a probability measure $\mu^N$ on $\Omega_N$, and let $(\eta (t))_{t\ge 0}$ be the Markov process on $\Omega_N$ driven by the diffusively accelerated generator $N^2\mathscr{L}_N$ and starting from $\mu^N$. We denote by $\p_{\mu^N}$ the law that this process generates on the Skorokhod space $\mathcal{D}([0,T],\Omega_N)$ for an arbitrarily fixed time horizon $T>0$. Also denote by $\E_{\mu^N}$ the corresponding expectation. Because of the boundary dynamics, and since $\rho_\ell,\rho_r\neq 0$, the process $(\eta (t))_{t\ge 0}$ is irreducible and therefore admits a unique stationary measure~$\mu_\mathrm{ss}^N$.

\begin{proposition}
    In the equilibrium case where $(\rho_\ell ,\mathcal{E}_\ell)=(\rho_r ,\mathcal{E}_r)=(\rho ,\mathcal{E})$, the stationary measure~$\mu_\mathrm{ss}^N$ coincides with the product measure $\nu_{\rho ,\mathcal{E}}^N$ given in \cref{defin:invariant_measures}. In the non-equilibrium case $(\rho_\ell ,\mathcal{E}_\ell )\neq (\rho_r,\mathcal{E}_r)$, we have no explicit expression for $\mu_\mathrm{ss}^N$, but the density and energy fields under $\mu_\mathrm{ss}^N$, respectively defined by $\rho_\mathrm{ss}^N(x)\coloneq \mu_\mathrm{ss}^N(\xi_x)$ and $\mathcal{E}_\mathrm{ss}^N(x)\coloneq \mu_\mathrm{ss}^N(\eta_x)$ for $x\in\Lambda_N$, write as
    \begin{equation*}
        \rho_\mathrm{ss}^N(x) = \frac{(\rho_r-\rho_\ell)x+(N^\theta -1)(\rho_\ell+\rho_r)+N\rho_\ell}{2N^\theta +N-2},
    \end{equation*}
    \begin{equation}\label{eq:stationary_empirical profiles}
        \mathcal{E}_\mathrm{ss}^N(x) = \frac{(\mathcal{E}_r-\mathcal{E}_\ell)x+(N^\theta -1)(\mathcal{E}_\ell+\mathcal{E}_r)+N\mathcal{E}_\ell}{2N^\theta +N-2}.
    \end{equation}
\end{proposition}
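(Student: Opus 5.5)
The statement has two parts, and I would handle them separately.

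\emph{Equilibrium case.} The bulk generator $\mathscr{L}_0$ is reversible with respect to $\nu_{\rho,\mathcal{E}}^N$ by \cref{defin:invariant_measures}. By uniqueness of $\mu_\mathrm{ss}^N$ (irreducibility), it then suffices to show that $\mathscr{L}_\ell$ and $\mathscr{L}_r$ are also reversible with respect to $\nu_{\rho,\mathcal{E}}^N$ when the reservoir parameters equal $(\rho,\mathcal{E})$.

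Both boundary generators act only on the single coordinate $\eta_1$ (resp. $\eta_{N-1}$). Since the measure is a product, I only need detailed balance for the one-site chain on $\{0,\dots,\kappa\}$ with respect to the marginal $\nu_{\rho,\mathcal{E}}$. There are two kinds of transitions to check:
\begin{itemize}
\item \emph{Creation/annihilation $0\leftrightarrow j$.} Detailed balance reads
\[
(1-\rho)(\kappa-1)\rho\binom{\kappa-1}{j-1}\mathfrak{p}^{j-1}(1-\mathfrak{p})^{\kappa-j}=\nu_{\rho,\mathcal{E}}(j)(\kappa-1)(1-\rho),
\]
which holds by the definition of the marginal.
\item \emph{Energy moves $j\leftrightarrow j+1$ with $j\ge1$.} We need
\[
\nu(j)(\mathcal{E}-\rho)(\kappa-j)=\nu(j+1)(\kappa\rho-\mathcal{E})\,j .
\]
Using $\nu(j+1)/\nu(j)=\frac{\kappa-j}{j}\frac{\mathfrak{p}}{1-\mathfrak{p}}$, this reduces to $\frac{\mathfrak{p}}{1-\mathfrak{p}}=\frac{\mathcal{E}-\rho}{\kappa\rho-\mathcal{E}}$. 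That identity follows directly from \eqref{def:binomial_parameter}, since $\mathfrak{p}=\frac{\mathcal{E}-\rho}{(\kappa-1)\rho}$ and $1-\mathfrak{p}=\frac{\kappa\rho-\mathcal{E}}{(\kappa-1)\rho}$.
\end{itemize}
Hence $\nu_{\rho,\mathcal{E}}^N$ is reversible, and therefore stationary, for $\mathscr{L}_N$.

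\emph{Non-equilibrium profiles.} I would use stationarity in the form $\mu_\mathrm{ss}^N(\mathscr{L}_N\xi_x)=0$ for every $x\in\Lambda_N$, together with the local conservation laws \eqref{eq:local_conservation_laws}. The key point is that the currents are linear in $\xi$ (and in $\eta$), both in the bulk \eqref{eq:gradient_currents_bulk} and at the boundaries \eqref{eq:boundary_particle_current}–\eqref{eq:boundary_energy_current}. So the equations close on the one-point functions, with no correlations appearing, and the expected current $\mu_\mathrm{ss}^N(j_{x,x+1}^p)$ equals a constant $J$ independent of $x\in\{0,\dots,N-1\}$.

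This gives the linear system
\[
(\kappa-1)\bigl(\rho_\mathrm{ss}^N(x)-\rho_\mathrm{ss}^N(x+1)\bigr)=J \quad\text{for } 1\le x\le N-2,
\]
\[
\frac{\kappa-1}{N^\theta}\bigl(\rho_\ell-\rho_\mathrm{ss}^N(1)\bigr)=J=\frac{\kappa-1}{N^\theta}\bigl(\rho_\mathrm{ss}^N(N-1)-\rho_r\bigr).
\]
The bulk equations force $\rho_\mathrm{ss}^N(x)=a+bx$. The two boundary equations then read $\rho_\ell-a-b=-N^\theta b$ and $a+b(N-1)-\rho_r=-N^\theta b$. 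Subtracting them gives
\[
b=\frac{\rho_r-\rho_\ell}{2N^\theta+N-2},
\]
and substituting back, $a=\rho_\ell+(N^\theta-1)b$ yields the numerator $(N^\theta-1)(\rho_\ell+\rho_r)+N\rho_\ell$ after putting everything over the common denominator. The energy profile follows from the identical computation with $\eta$ in place of $\xi$ and $(\mathcal{E}_\ell,\mathcal{E}_r)$ in place of $(\rho_\ell,\rho_r)$.

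\emph{Expected difficulty.} The only delicate points are the boundary detailed-balance check, where the specific rates $(\mathcal{E}_\ell-\rho_\ell)(\kappa\xi_1-\eta_1)$ and $(\kappa\rho_\ell-\mathcal{E}_\ell)(\eta_1-\xi_1)$ must match the binomial parameter, and the algebra in the last step of the profile computation. Neither should pose a real obstacle.
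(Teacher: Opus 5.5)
Your proposal is correct and follows the paper's proof. In the equilibrium case you show $\nu_{\rho,\mathcal{E}}^N$ is reversible for $\mathscr{L}_0$, $\mathscr{L}_\ell$ and $\mathscr{L}_r$; for the profiles you solve the linear system given by $\mu_\mathrm{ss}^N(\mathscr{L}_N\xi_x)=\mu_\mathrm{ss}^N(\mathscr{L}_N\eta_x)=0$, and you correctly supply the boundary detailed-balance check and the algebra that the paper omits. One cosmetic slip: with the two boundary equations written as you wrote them, it is adding them (not subtracting) that isolates $b=\frac{\rho_r-\rho_\ell}{2N^\theta+N-2}$, whereas subtracting gives $2a=\rho_\ell+\rho_r-Nb$.
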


\begin{proof}
    We already know that for any parameters $\rho,\mathcal{E}$, the measure $\nu_{\rho,\mathcal{E}}^N$ is reversible for the generator $\mathscr{L}_0$. Moreover, the generator $\mathscr{L}_\ell$ (resp.~$\mathscr{L}_r$) is built in such a way that the measure~$\nu_{\rho_\ell,\mathcal{E}_\ell}^N$ (resp.~$\nu_{\rho_r,\mathcal{E}_r}^N$) is reversible for it. Therefore, in the equilibrium case $(\rho_\ell ,\mathcal{E}_\ell)=(\rho_r ,\mathcal{E}_r)=(\rho ,\mathcal{E})$, the measure $\nu_{\rho ,\mathcal{E}}^N$ is reversible for the full generator $\mathscr{L}_N$, and hence it coincides with the stationary measure $\mu_\mathrm{ss}^N$. 

    \medskip

    In order to obtain the expressions of the stationary density and energy fields, it suffices to use the stationarity relations $\mu_\mathrm{ss}^N(\mathscr{L}_N\xi_x)=0$ and $\mu_\mathrm{ss}^N(\mathscr{L}_N\eta_x)=0$ for any $x\in\Lambda_N$ and to solve the resulting linear systems. We omit the details of the computations, which are straightforward but tedious.
\end{proof}

\section{Main results}
\label{sec:main_results}

\subsection{Hydrodynamic limit}

We start by introducing some notations. Define the random vector
\begin{equation}\label{eq:Q_x}
    \mathbf{Q}_x(t) = \begin{pmatrix}
        \xi_x(t) \\
        \eta_x(t)
    \end{pmatrix}
\end{equation}
for any $x\in\Lambda_N$ and $t\ge 0$. Let $\rho^\mathrm{ini} : [0,1]\longrightarrow [0,1]$ and $\mathcal{E}^\mathrm{ini} : [0,1]\longrightarrow [0,\kappa]$ be two measurable, Riemann-integrable, profiles satisfying the compatibility bounds $\rho^\mathrm{ini}\le \mathcal{E}^\mathrm{ini}\le\kappa\rho^\mathrm{ini}$. Set
\begin{equation*}
    \mathbf{q}^\mathrm{ini}(u) = \begin{pmatrix}
        \rho^\mathrm{ini}(u) \\
        \mathcal{E}^\mathrm{ini}(u)
    \end{pmatrix} \in \mathcal{T}_\kappa \qquad\mbox{ for any }u\in [0,1],
\end{equation*}
and also
\begin{equation*}
    \mathbf{q}_\ell = \begin{pmatrix}
        \rho_\ell \\
        \mathcal{E}_\ell
    \end{pmatrix}, \qquad \mathbf{q}_r = \begin{pmatrix}
        \rho_r \\
        \mathcal{E}_r
    \end{pmatrix}.
\end{equation*}

The first assumption we make is that the initial distribution $\mu^N$ is \emph{associated} with the profiles~$\rho^\mathrm{ini}$ and $\mathcal{E}^\mathrm{ini}$ in the sense that for any $\delta >0$ and any continuous test function $G:[0,1]\longrightarrow\R^2$, we have
\begin{equation}\label{eq:association_initial_distribution}
   \lim_{N\to +\infty} \mu^N \left( \eta\in\Omega_N \; :\; \bigg| \frac1N\sum_{x\in\Lambda_N} G\left(\frac xN\right)^\dagger \!\mathbf{Q}_x - \int_0^1 G(u)^\dagger \mathbf{q}^\mathrm{ini}(u)\diff u\bigg| >\delta \right) =0. \tag{H1}
\end{equation}
For any $t\ge 0$, define the \emph{empirical measure} $\pi^N_t$ to be the random measure on $[0,1]$ acting on~$\R^2$-valued functions which is given by
\begin{equation}\label{eq:empirical_measure}
    \pi^N_t(\diff u) = \frac1N\sum_{x\in\Lambda_N} \mathbf{Q}_x(t)^\dagger\delta_{x/N}(\diff u),
\end{equation}
where $\delta_v(\diff u)$ is the Dirac measure concentrated at $v\in [0,1]$ which acts on $\R^2$-valued functions. Then, hypothesis \ref{eq:association_initial_distribution} translates into the convergence in probability of the empirical measure~$\pi^N_0$ towards the deterministic measure $\mathbf{q}^\mathrm{ini}(u)\diff u$ as $N\to +\infty$, with respect to the weak topology on the space of measures on~$[0,1]$.

\medskip

We have the following result stating that the hydrodynamic limit of this model is given by a system of two uncoupled heat equations with different boundary conditions, depending on the value of the parameter $\theta$.

\begin{theorem}[Hydrodynamic limit]\label{thm:hydrodynamic_limit}
    Assume \eqref{eq:association_initial_distribution}. Then, for any $t\ge 0$, we have the convergence in probability
    \begin{equation*}
        \pi_t^N(\diff u)\xrightarrow[N\to +\infty]{\p} \mathbf{q}_t(u)^\dagger\diff u,
    \end{equation*}
    where $\mathbf{q}_t(u) = \begin{pmatrix}
        \rho_t(u) \\
        \mathcal{E}_t(u)
    \end{pmatrix}$ is the unique weak solution to the two-component heat equation
    \begin{equation}\label{eq:hydrodynamic_equation}
        \begin{cases}
            \partial_t\mathbf{q}_t(u) = D\partial_u^2\mathbf{q}_t(u) & \mbox{ for } (t,u)\in [0,T]\times [0,1],\\
            \mathbf{q}_0 = \mathbf{q}^\mathrm{ini},
        \end{cases}
    \end{equation}
    with diffusion coefficient $D=\kappa -1$, and with
    \begin{itemize}
        \item if $0\le\theta <1$, Dirichlet boundary conditions
        \begin{equation*}
            \mathbf{q}_t(0) = \mathbf{q}_\ell ,\qquad \mathbf{q}_t(1) = \mathbf{q}_r \quad \mbox{ for any }t\in [0,T] \; ;
        \end{equation*}
        \item if $\theta =1$, Robin boundary conditions
        \begin{equation*}
            \partial_u\mathbf{q}_t(0) = \mathbf{q}_t(0)-\mathbf{q}_\ell ,\qquad \partial_u\mathbf{q}_t(1) = \mathbf{q}_r-\mathbf{q}_t(1) \quad \mbox{ for any }t\in [0,T] \; ;
        \end{equation*}
        \item if $\theta >1$, Neumann boundary conditions
        \begin{equation*}
            \partial_u\mathbf{q}_t(0) = \partial_u\mathbf{q}_t(1) =0 \quad \mbox{ for any }t\in [0,T] .
        \end{equation*}
    \end{itemize}
\end{theorem}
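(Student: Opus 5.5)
The plan is the classical entropy/tightness strategy used for the boundary-driven SSEP, adapted to two conserved quantities. Because the currents \eqref{eq:gradient_currents_bulk}, \eqref{eq:boundary_particle_current} and \eqref{eq:boundary_energy_current} are exact discrete gradients of the \emph{linear} variables $\xi_x,\eta_x$, the two components decouple at the level of the generator acting on linear functionals. No replacement lemma is needed, which is the key simplification.

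First, for a test function $G\in C^{1,2}([0,T]\times[0,1],\R^2)$ we use Dynkin's formula:
\begin{equation*}
M_t^N(G)=\langle\pi_t^N,G_t\rangle-\langle\pi_0^N,G_0\rangle-\int_0^t(\partial_s+N^2\mathscr{L}_N)\langle\pi_s^N,G_s\rangle\,\diff s .
\end{equation*}
By \eqref{eq:local_conservation_laws} and a summation by parts, $N^2\mathscr{L}_N\langle\pi^N,G\rangle$ equals $(\kappa-1)\frac1N\sum_x\Delta_NG(x/N)^\dagger\mathbf{Q}_x$ plus boundary terms. These boundary terms involve $N\,\nabla_N G$ at the endpoints multiplied by $\mathbf{Q}_1,\mathbf{Q}_{N-1}$, and $N^{1-\theta}G(1/N)^\dagger(\mathbf{q}_\ell-\mathbf{Q}_1)$ (and symmetrically on the right).

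The quadratic variation is bounded by $CN^{-1}\|\partial_uG\|_\infty^2+CN^{-\theta-1}\|G\|_\infty^2$, so $M^N$ vanishes in $L^2$. Tightness of $(\pi^N)$ in $\mathcal{D}([0,T],\mathcal{M}_+\times\mathcal{M}_+)$ then follows from Aldous' criterion, using this martingale bound and the boundedness of $\mathbf{Q}_x$. Any limit point is supported on measures absolutely continuous with density in $\mathcal{T}_\kappa$ (since $0\le\xi_x\le1$ and $\xi_x\le\eta_x\le\kappa\xi_x$), and is continuous in time because jumps are of size $O(1/N)$.

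The choice of test functions depends on the regime. For $\theta=1$ we take arbitrary $G$; the boundary terms become $(\kappa-1)[\partial_uG(0)-G(0)]^\dagger\mathbf{Q}_1+(\kappa-1)G(0)^\dagger\mathbf{q}_\ell$ and the symmetric right-hand term. For $\theta>1$ we take $G$ with $\partial_uG(0)=\partial_uG(1)=0$, so that all boundary terms are $O(N^{1-\theta})+O(1/N)$. For $\theta<1$ we take $G$ vanishing at $0,1$. The factor $N^{1-\theta}G(1/N)=O(N^{-\theta})$ is then harmless, but $\partial_uG(0)\mathbf{Q}_1$ remains and must be replaced by $\mathbf{q}_\ell$.

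\textbf{The main obstacle} is controlling the single-site terms $\mathbf{Q}_1$ and $\mathbf{Q}_{N-1}$. This needs two replacements: in the Robin case, $\mathbf{Q}_1$ by the box average $\frac1{\epsilon N}\sum_{x\le\epsilon N}\mathbf{Q}_x$; in the Dirichlet case, additionally $\mathbf{Q}_1$ by $\mathbf{q}_\ell$. I would first try a direct argument via linearity. Apply Dynkin's formula to $\sum_{x\le\epsilon N}h(x)\mathbf{Q}_x$ with an auxiliary discrete function $h$ chosen so that the generator telescopes to the desired time-integrated difference, with a martingale of small variance. This works because the evolution of first moments closes exactly, so one only needs $L^1$ or $L^2$ smallness of time integrals.

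For $\theta<1$, take $h(x)=N^\theta+x$ up to normalisation, a discrete harmonic function compatible with the boundary rate $N^{-\theta}$. This gives $\int_0^t(\mathbf{Q}_1(s)-\mathbf{q}_\ell)\,\diff s\to0$ in $L^1$, since the boundary current is $N^{-\theta}(\mathbf{q}_\ell-\mathbf{Q}_1)$ and the bulk gradient telescopes. If this direct argument fails, the fallback is the standard Dirichlet-form estimate: relative entropy with respect to a product measure $\nu^N_{\gamma(\cdot)}$ with a smooth profile interpolating $\mathbf{q}_\ell,\mathbf{q}_r$, combined with the energy estimate $\int_0^T\langle -\mathscr{L}_N\sqrt{f_s},\sqrt{f_s}\rangle\,\diff s\le C/N$ (up to boundary errors). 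This lets one swap $\eta_1$ with $\eta_x$, $x\le\epsilon N$, by paths using the particle-jump and energy-exchange moves, and $\eta_1$ with the reservoir using $\mathscr{L}_\ell$.

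To conclude, every limit point is concentrated on weak solutions of \eqref{eq:hydrodynamic_equation} with the stated boundary conditions, with initial datum $\mathbf{q}^\mathrm{ini}$ by \eqref{eq:association_initial_distribution}. For $\theta<1$, the Dirichlet condition is encoded through the replaced boundary term $\partial_uG(0)^\dagger\mathbf{q}_\ell$, possibly supplemented by an energy estimate showing $\mathbf{q}_t\in H^1$ with the correct traces. Uniqueness of weak solutions of the heat equation, componentwise, in each of the three regimes gives convergence of the whole sequence. Since the limit is deterministic and continuous in time, convergence in probability holds at each fixed $t$.
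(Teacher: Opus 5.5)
Your outline is sound, but for the boundary terms it takes a genuinely different route from the one the paper relies on. The paper gives no proof and defers to the SSEP arguments of \cite{baldasso_exclusion_2017,goncalves_IHP_2019}. There the boundary replacements ($\mathbf{Q}_1\to\mathbf{q}_\ell$ for $\theta<1$, $\mathbf{Q}_1\to$ box average for $\theta=1$) come from the entropy method: entropy inequality, Feynman--Kac, Dirichlet-form bounds relative to a slowly varying product measure, and a moving-particle lemma, i.e.\ the machinery of \cref{sec:local_equilibrium}. An energy estimate then puts the limit in $L^2(0,T;H^1)$, so that boundary traces make sense in the weak formulations.

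Your main route instead uses the fact that $\mathscr{L}_N$ maps linear functionals of $(\mathbf{Q}_x)$ to linear functionals plus constants. Each replacement then reduces to a discrete Poisson problem for $A_N^\theta$ plus an $L^2$ martingale bound, which is exactly the mechanism of the paper's own \cref{lemma:boundary_part_tightness}. This is shorter and gives bounds uniform in the initial law. The price is that it is tied to the linear gradient structure and by itself gives no regularity of the limit. Uniqueness must therefore be proved for the very weak formulation, by duality against $T_{t-s}^\theta H$ with $H\in\mathcal{S}_\theta$; this does work here in all three regimes.

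Three points need fixing.

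First, your harmonic function is on the wrong side. The increasing profile $h_x=N^\theta+x-1$ satisfies $h_2-h_1=N^{-\theta}h_1$, so it is annihilated by the left-boundary row of $A_N^\theta$. As a result the boundary current $N^{-\theta}h_1(\mathbf{q}_\ell-\mathbf{Q}_1)$ and the telescoped bulk sum cancel exactly in $\mathbf{Q}_1$, and you learn nothing about site $1$. You need $h\propto(A_N^\theta)^{-1}\delta_1$, namely $h_x=N^\theta+N-1-x$, which is harmonic everywhere except at site $1$. It gives the exact identity
\begin{equation*}
N^2\mathscr{L}_N\sum_{x\in\Lambda_N}h_x\mathbf{Q}_x=DN^2a_N\big(\mathbf{m}_\mathrm{ss}^N(1)-\mathbf{Q}_1\big),\qquad a_N=2+(N-2)N^{-\theta}.
\end{equation*}
Since $|h_x|\le 2N$ and $a_N\ge\frac12N^{1-\theta}$, dividing Dynkin's formula by $DN^2a_N$ shows that $\int_0^t(\mathbf{Q}_1-\mathbf{m}_\mathrm{ss}^N(1))\diff s=O(N^{\theta-1})$ in $L^2$. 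Combined with $\mathbf{m}_\mathrm{ss}^N(1)\to\mathbf{q}_\ell$, this is the replacement you want.

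Second, in the Robin case with arbitrary $G$, your limiting identity contains $\frac1\varepsilon\int_0^\varepsilon\mathbf{q}_s(u)\diff u$. Letting $\varepsilon\to0$ requires a trace, hence the energy estimate you only mention for $\theta<1$. The simpler fix is to restrict to $G$ with $\partial_uG(0)=G(0)$ and $\partial_uG(1)=-G(1)$. Then the coefficient $D\big(\nabla_NG(\tfrac1N)-G(\tfrac1N)\big)$ of $\mathbf{Q}_1$ is $O(1/N)$, so no replacement is needed at all, and uniqueness follows by the same duality argument.

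Third, a minor point: the boundary part of the quadratic variation is $O(N^{-\theta}|G(\tfrac1N)|^2)$, not $O(N^{-\theta-1})$. It still vanishes for the test functions you actually use.
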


\begin{remark}[Maximum principle and compatibility bounds]\label{remark:maximum_principle}
    As a consequence of the maximum principle, or simply with the explicit formula of the solution to the heat equation as a convolution with the heat kernel, we have that $\mathbf{q}_t(u)\in\mathcal{T}_\kappa$ for any $(t,u)\in [0,T]\times [0,1]$, \textit{i.e.}~the compatibility bounds $\rho_t(u)\le\mathcal{E}_t(u)\le\kappa\rho_t(u)$ are preserved by the hydrodynamic equation \eqref{eq:hydrodynamic_equation}. Moreover, for any $t >0$ and any $u\in (0,1)$, as a consequence of the infinite speed of propagation of the heat equation, we have the inequalities
    \begin{equation*}
       0< \rho_t(u)<1\quad\mbox{ and }\quad \rho_t(u)<\mathcal{E}_t(u)<\kappa\rho_t(u)\qquad i.e.\qquad \mathbf{q}_t(u)\in\mathring{\mathcal{T}}_\kappa 
    \end{equation*}
    In the Neumann case $\theta >1$, this holds provided that  $\int_0^1\mathbf{q}^\mathrm{ini}(u)\diff u$ belongs to $\mathring{\mathcal{T}}_\kappa$.
\end{remark}

We do not provide a proof of the hydrodynamic limit here, as thanks to the gradient relations \eqref{eq:gradient_currents_bulk}, \eqref{eq:boundary_particle_current} and \eqref{eq:boundary_energy_current}, it can be obtained by the exact same arguments as in \cite{baldasso_exclusion_2017,goncalves_IHP_2019}.

\medskip

This results is a law of large numbers for the empirical measure $\pi_t^N$. In the next section, we are interested in the fluctuations of this empirical measure around its hydrodynamic limit, \textit{i.e.}~in a central limit theorem for $\pi_t^N$ both in the dynamical and stationary regimes. We start with the former.

\subsection{Non-equilibrium dynamical fluctuations}

Let us introduce the space of test functions that we use to define the fluctuation fields. It is a space of smooth test functions satisfying Dirichlet, Robin or Neumann type boundary conditions depending on the value of $\theta$, and stabilized by taking second derivative. 

\begin{definition}[Space of test functions]\label{defin:test_functions}
    We define the space $\mathcal{S}_\theta$ to be the set of smooth functions~$G:[0,1]\longrightarrow\R^2$, that satisfy the boundary conditions
    \begin{equation*}
        \begin{cases}
            \partial_u^{2k}G(0) = \partial_u^{2k}G(1) =0 & \mbox{ if } 0\le \theta <1 ,\\
            \partial_u^{2k+1}G(0)=\partial_u^{2k}G(0), \quad \partial_u^{2k+1}G(1)=-\partial_u^{2k}G(1) & \mbox{ if }\theta =1 ,\\
            \partial_u^{2k+1}G(0) = \partial_u^{2k+1}G(1) =0 & \mbox{ if }\theta >1,
        \end{cases}
    \end{equation*}
    for any integer $k\ge 0$. We endow this space with the topology induced by the family of seminorms given by $\tnorm{G}_j = \|\partial_u^jG\|_\infty$ for any $G\in\mathcal{S}_\theta$ and any integer $j\ge 0$. This makes $\mathcal{S}_\theta$ a nuclear Fréchet space\footnote{\textit{i.e.}~a complete Hausdorff space whose topology is induced by a countable family of seminorms, and in which all summable sequences are absolutely summable.}, and we denote by $\mathcal{S}_\theta'$ its topological dual.
\end{definition}

\begin{definition}
    Define the linear operators $\Delta_\theta : \mathcal{S}_\theta\longrightarrow\mathcal{S}_\theta$ and $\nabla_\theta : \mathcal{S}_\theta\longrightarrow C^\infty([0,1],\R^2)$ by
    \begin{equation*}
        \Delta_\theta G (u) = \begin{cases}
            \partial_u^2 G(0^+) & \mbox{ if } u=0,\\
            \partial_u^2 G(u) & \mbox{ if } u\in (0,1),\\
            \partial_u^2 G(1^-) & \mbox{ if } u=1,  
        \end{cases}\qquad \nabla_\theta G (u) = \begin{cases}
            \partial_u G(0^+) & \mbox{ if } u=0,\\
            \partial_u G(u) & \mbox{ if } u\in (0,1),\\
            \partial_u G(1^-) & \mbox{ if } u=1.  
        \end{cases}
    \end{equation*}
\end{definition}

\begin{definition}[Heat semigroup]
    Let $(T_t^\theta)_{t\ge 0}$ be the heat semigroup associated to the equation \eqref{eq:hydrodynamic_equation} with the corresponding boundary conditions depending on the value of $\theta$, in the homogeneous case $\mathbf{q}_\ell = \mathbf{q}_r =0$.
\end{definition}

\noindent We list hereafter some properties of the heat semigroup that will be useful in the sequel. 

\begin{proposition}\label{prop:heat_semigroup_properties}
    The semigroup $(T_t^\theta)_{t\ge 0}$ satisfies the following properties:
    \begin{enumerate}
        \item For any $G\in\mathcal{S}_\theta$ and any $t\ge 0$, we have $T_t^\theta G\in\mathcal{S}_\theta$.
        \item For any $G\in\mathcal{S}_\theta$, we have the convergence
        \begin{equation*}
            T_t^\theta G \xrightarrow[t\to\infty]{} \begin{cases}
                0 & \mbox{ if } 0\le \theta\le 1,\\
                \displaystyle\int_0^1 G(u)\diff u & \mbox{ if }\theta >1,
            \end{cases}
        \end{equation*}
        uniformly on $[0,1]$. Moreover, this convergence is exponentially fast.
        \item When $0\le \theta\le 1$, the operator $\Delta_\theta$ is a bijection from $\mathcal{S}_\theta$ to itself. When $\theta >1$, it is solely a bijection from the subspace of $\mathcal{S}_\theta$ consisting of mean-zero functions, to itself. 
    \end{enumerate}
\end{proposition}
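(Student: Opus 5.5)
The plan is to use the spectral decomposition of the one-dimensional Laplacian with the relevant boundary conditions. The semigroup acts componentwise on $\R^2$-valued functions, since $D=\kappa-1$ is scalar, so it suffices to treat scalar $G$. For each $\theta$, let $\{\phi_n\}_{n\ge 0}$ be an orthonormal basis of $L^2([0,1])$ of eigenfunctions of $\partial_u^2$ with the homogeneous boundary conditions:
\begin{itemize}
\item for $\theta<1$ (Dirichlet), $\phi_n(u)=\sqrt2\sin(n\pi u)$, $n\ge1$;
\item for $\theta>1$ (Neumann), $\phi_0=1$ and $\phi_n(u)=\sqrt2\cos(n\pi u)$, $n\ge1$;
\item for $\theta=1$ (Robin), $\phi_n(u)=A_n\big(\lambda_n\cos(\lambda_n u)+\sin(\lambda_n u)\big)$, where the $\lambda_n>0$ solve $\tan\lambda=2\lambda/(\lambda^2-1)$.
\end{itemize}
The boundary conditions $\phi'(0)=\phi(0)$ and $\phi'(1)=-\phi(1)$ make $-\partial_u^2$ self-adjoint and positive, as one sees by integrating by parts:
\[
\langle -\phi'',\phi\rangle=\|\phi'\|^2+\phi(0)^2+\phi(1)^2>0 .
\]
The eigenvalues $-\lambda_n^2$ satisfy $\lambda_n\sim n\pi$. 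Then
\[
T_t^\theta G=\sum_n e^{-D\lambda_n^2 t}\langle G,\phi_n\rangle\phi_n .
\]

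\textbf{Item (1).} Each $\phi_n$ lies in $\mathcal{S}_\theta$, because the boundary conditions are preserved under differentiation twice: for instance, $\phi_n''=-\lambda_n^2\phi_n$ gives all even-order conditions. For $G\in\mathcal{S}_\theta$, repeated integration by parts, with the boundary terms vanishing exactly because $G$ and $\phi_n$ satisfy the same conditions at every order, gives
\[
\langle G,\phi_n\rangle=(-\lambda_n^{-2})^k\langle \partial_u^{2k}G,\phi_n\rangle .
\]
Hence the coefficients decay faster than any power of $n$. Since $\|\partial_u^j\phi_n\|_\infty\le C\lambda_n^j$, the series for $T_t^\theta G$ and all its derivatives converge uniformly, so $T_t^\theta G$ is smooth. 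Each term satisfies the boundary conditions of $\mathcal{S}_\theta$, so the sum does too.

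\textbf{Item (2).} The same uniform convergence gives, for $\theta\le1$,
\[
\|T_t^\theta G\|_\infty\le e^{-D\lambda_1^2 t}\sum_n|\langle G,\phi_n\rangle|\,\|\phi_n\|_\infty .
\]
The first eigenvalue is bounded away from $0$: $\lambda_1=\pi$ for Dirichlet, and $\lambda_1>0$ for Robin by the positivity computation above. For $\theta>1$ the zero mode survives, so
\[
T_t^\theta G-\int_0^1G=\sum_{n\ge1}(\cdots)=O\big(e^{-D\pi^2 t}\big).
\]
This gives exponential convergence, and the same bound holds in every seminorm.

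\textbf{Item (3).} For injectivity, $\Delta_\theta G=0$ means $G$ is affine. Affine functions satisfying Dirichlet or Robin conditions are $0$: for Robin, $G=a+bu$ with $b=a$ and $b=-(a+b)$ forces $a=b=0$. Neumann affine functions are constant, so they vanish under the mean-zero constraint. For surjectivity onto $\mathcal{S}_\theta$ (or its mean-zero subspace), set
\[
\Delta_\theta^{-1}H=-\sum_{n}\lambda_n^{-2}\langle H,\phi_n\rangle\phi_n ,
\]
summing over $n\ge1$ in the Neumann case, where $\langle H,\phi_0\rangle=0$. The rapid decay of the coefficients again shows this lies in $\mathcal{S}_\theta$. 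Alternatively, one can integrate $H$ twice and fix the two constants by the boundary conditions; the higher-order conditions are inherited from those of $H=\partial_u^2G$.

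\textbf{Main obstacle.} The delicate case is Robin, $\theta=1$, because the eigenfunctions are not explicit trigonometric functions. I need the asymptotics $\lambda_n=n\pi+O(1/n)$ to get uniform bounds $\|\phi_n^{(j)}\|_\infty\le C\lambda_n^j$ with uniformly bounded normalizing constants $A_n\lambda_n$, and I need completeness, which follows from Sturm–Liouville theory. A second point to check is that the integration by parts for the coefficient decay leaves no boundary residue. For Robin the boundary term is
\[
\big[G'\phi-G\phi'\big]_0^1 ,
\]
and it cancels because both functions satisfy the same conditions; this is precisely why $\mathcal{S}_\theta$ imposes the conditions at all even and odd orders.
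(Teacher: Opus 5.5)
Your proposal is correct and follows the same route as the paper. The paper proves this only by pointing to the explicit sine, cosine and mixed sine--cosine eigenfunction expansions of the semigroup, deferring details to Franco et al., Section 3; you carry out that argument.

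One cosmetic slip: in the Robin case the first root of $\tan\lambda = 2\lambda/(\lambda^2-1)$ lies in $(1,\pi/2)$, so with your indexing $\lambda_n=(n-1)\pi+O(1/n)$ rather than $n\pi+O(1/n)$. This does not affect the bounds $\|\partial_u^j\phi_n\|_\infty\le C\lambda_n^j$ or the coefficient decay you use.
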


\begin{proof}
    The proof of these results relies on the explicit formula of the semigroup, expanded as a series of sines when $\theta <1$, a series of cosines when $\theta >1$ and a series of sines and cosines when~$\theta =1$, see for instance \cite[Section 2.4.1]{franceschini_non-equilibrium_2024} for these explicit formulas. The proof of these results is properly exposed in \cite[Section 3]{franco_non-equilibrium_2019} in the case $\theta =1$. The general case is very similar so we do not reproduce it here. 
\end{proof}

\medskip

As we said earlier, we are interested in the fluctuations of the empirical measure $\pi_t^N$ around its hydrodynamic limit, that is, for any test function $G$ we want to study the limiting behaviour of the quantity~$\sqrt{N}\big( \langle \pi_t^N,G\rangle - \E_{\mu^N}[\langle \pi_t^N,G\rangle]\big)$. This leads us to define the \emph{fluctuation field} as follows.

\begin{definition}[Fluctuation field]
    For any $t\ge 0$, define the \emph{fluctuation field} $\mathcal{Y}_t^N$ to be the random element of $\mathcal{S}_\theta'$ given by
    \begin{equation}\label{def:fluctuation_field}
        \mathcal{Y}_t^N(G) = \frac{1}{\sqrt{N}}\sum_{x\in\Lambda_N} G\left(\frac xN\right)^\dagger\!\bar{\mathbf{Q}}_x(t)
    \end{equation}
    for any $G\in\mathcal{S}_\theta$. Above, we set
    \begin{equation*}
        \bar{\mathbf{Q}}_x(t) = \mathbf{Q}_x(t)- \mathbf{m}_t^N(x) \quad \mbox{ where }\quad \mathbf{m}_t^N(x) = \E_{\mu^N}[\mathbf{Q}_x(t)].
    \end{equation*}
\end{definition}

Throughout, since the fluctuation field only sees the values of the test functions on the discrete set $\frac1N\Lambda_N$, we often identify a test function $G\in\mathcal{S}_\theta$ with the vector $(G(x/N))_{x\in\Lambda_N}$.

\medskip

We need to impose another assumption on the initial distribution $\mu^N$, which is a variance estimate on the initial distribution. Assume that
\begin{equation}\label{eq:assumption_variance_estimate}
    \exists C_0>0,\quad \forall G\in\mathcal{S}_\theta ,\qquad\E_{\mu^N}\left[ \big| \mathcal{Y}_0^N(G)\big|^2\right] \le C_0 \| G\|_{N}^2\tag{H2}
\end{equation}
where $\| G\|_N^2 = \langle G,G\rangle_N$ is the discrete $\ell^2 (\Lambda_N)$-norm associated to the inner product 
\begin{equation}\label{eq:discrete_inner_product}
    \langle G,H\rangle_N = \frac1N\sum_{x\in\Lambda_N} G\left(\frac xN\right)^\dagger\! H\left(\frac xN\right).
\end{equation}

\begin{remark}
    Note that both assumptions \ref{eq:association_initial_distribution} and \ref{eq:assumption_variance_estimate} are satisfied under the initial product measure 
    \begin{equation}\label{eq:local_gibbs}
        \bigotimes_{x\in\Lambda_N} \nu_{\rho^\mathrm{ini}(x/N),\mathcal{E}^\mathrm{ini}(x/N)}.
    \end{equation}
    In the literature, an assumption that the two-point correlation function of the initial distribution is of order $1/N$ is usually made \cite{franceschini_non-equilibrium_2024,goncalves_non-equilibrium_2020,goncalves_hydrodynamics_2023,landim_stationary_2008}, and this in particular implies \ref{eq:assumption_variance_estimate}. However, we do not need this assumption here, and we start directly from the weaker assumption \ref{eq:assumption_variance_estimate} instead. This is one of the main novelties of this work, which also allows to relax the assumptions made on the regularity of the initial profiles.
\end{remark}

\begin{definition}[Compressibility matrix]\label{defin:compressibility_matrix}
    Define the \emph{compressibility matrix} to be the $2\times 2$ covariance matrix of the random variables $\xi_x$ and $\eta_x$ under the invariant measure $\nu_{\rho ,\mathcal{E}}$ associated parameters~$(\rho ,\mathcal{E})\in\mathcal{T}_\kappa$, namely
    \begin{equation*}
        \chi (\rho ,\mathcal{E}) =\begin{pmatrix}
            \mathrm{Var}_{\nu_{\rho ,\mathcal{E}}}(\xi_x) & \mathrm{Cov}_{\nu_{\rho ,\mathcal{E}}}(\xi_x ; \eta_x)\\
            \mathrm{Cov}_{\nu_{\rho ,\mathcal{E}}}(\xi_x ;\eta_x) & \mathrm{Var}_{\nu_{\rho ,\mathcal{E}}}(\eta_x)
        \end{pmatrix}
    \end{equation*}
    An explicit computation gives that for $(\rho ,\mathcal{E})\in\mathcal{T}_\kappa$, with $\rho\neq 0$, we have
    \begin{equation*}
        \chi (\rho ,\mathcal{E}) = \begin{pmatrix}
            \rho (1-\rho ) & \mathcal{E}(1-\rho ) \\
            \mathcal{E}(1-\rho ) & \frac{(\mathcal{E}-\rho )(\kappa\rho - \mathcal{E})}{(\kappa -1)\rho} + \frac{(1-\rho )\mathcal{E}^2}{\rho}
        \end{pmatrix}
    \end{equation*}
    and when $\rho =0$ this matrix is the zero matrix. Moreover, the matrix $\chi (\rho ,\mathcal{E})$ is positive semidefinite, and it is positive definite if $(\rho ,\mathcal{E})$ belongs to the interior triangle $\mathring{\mathcal{T}}_\kappa$.
\end{definition}

\begin{definition}[Boundary matrices]\label{defin:boundary_matrices}
    Let $m_2(\rho ,\mathcal{E})$ be the second moment of the energy under the invariant measure $\nu_{\rho ,\mathcal{E}}$, namely
    \begin{equation*}
        m_2(\rho ,\mathcal{E}) = \nu_{\rho ,\mathcal{E}}(\eta_x^2) = \frac{\mathcal{E}^2}{\rho} +\mathcal{E}-\rho - \frac{(\mathcal{E}-\rho )^2}{(\kappa -1)\rho}.
    \end{equation*}
    Define the left boundary conductivity matrix $\Sigma_\ell$ by
    \begin{multline}\label{eq:Sigma_left}
        \Sigma_\ell (\rho ,\mathcal{E}) = D\begin{pmatrix}
           (1-\rho_\ell )\rho + (1-\rho )\rho_\ell & (1-\rho_\ell )\mathcal{E} + (1-\rho )\mathcal{E}_\ell\\
           (1-\rho_\ell )\mathcal{E} + (1-\rho )\mathcal{E}_\ell & (1-\rho_\ell )m_2(\rho ,\mathcal{E})+(1-\rho )m_2(\rho_\ell ,\mathcal{E}_\ell)
        \end{pmatrix} \\
        + \big( (\mathcal{E}_\ell -\rho_\ell )(\kappa\rho -\mathcal{E}) + (\mathcal{E}-\rho )(\kappa\rho_\ell -\mathcal{E}_\ell)\big) \begin{pmatrix}
            0 & 0 \\
            0 & 1
           \end{pmatrix}
    \end{multline}
    Define similarly the right boundary matrix $\Sigma_r (\rho ,\mathcal{E})$ by replacing $(\rho_\ell ,\mathcal{E}_\ell)$ by $(\rho_r ,\mathcal{E}_r)$ in \eqref{eq:Sigma_left}. These matrices are positive semi-definite. We note that, under the proper choice of boundary parameters, \emph{Einstein's relation} holds, namely
    \begin{equation*}
        \Sigma_\ell (\rho_\ell ,\mathcal{E}_\ell) = 2D\chi (\rho_\ell ,\mathcal{E}_\ell)\quad\mbox{ and }\quad \Sigma_r (\rho_r ,\mathcal{E}_r) = 2D\chi (\rho_r ,\mathcal{E}_r).
    \end{equation*}
\end{definition}

Our main result about the non-equilibrium dynamical fluctuations is the following.

\begin{theorem}[Non-equilibrium dynamical fluctuations]\label{thm:dynamical_fluctuations}
    Assume \eqref{eq:association_initial_distribution} and \eqref{eq:assumption_variance_estimate}, and further assume that the initial fluctuation field $\mathcal{Y}_0^N$ converges in distribution to a random variable $\mathcal{Y}_0$ in~$\mathcal{S}_\theta '$. Then, the sequence of fluctuation fields $(\mathcal{Y}_t^N)_{t\in [0,T]}$ converges in distribution in~$\mathcal{D}([0,T],\mathcal{S}_\theta ')$ to a limit $(\mathcal{Y}_t)_{t\in [0,T]}$ with continuous trajectories that satisfies
    \begin{equation}\label{eq:decomposition_limit_point}
        \mathcal{Y}_t(G) = \mathcal{Y}_0(T_t^\theta G) + \int_0^t dM_s(T_{t-s}^\theta G)
    \end{equation}
    for any $G\in\mathcal{S}_\theta$ and any $t\in [0,T]$. Above, $(M_t)_{t\in [0,T]}$ is a mean-zero Gaussian martingale with quadratic variation given by
    \begin{equation*}
        \langle M(G)\rangle_t = \int_0^t \|\nabla_\theta G\|_{\theta ,\mathbf{q}_s}^2\diff s
    \end{equation*}
    where for any $t\in [0,T]$ and any functions $F,G$, we set
    \begin{multline}\label{eq:inner_product_theta}
        \langle F,G\rangle_{\theta ,\mathbf{q}_t} = 2D\int_0^1 F(u)^\dagger \chi (\mathbf{q}_t(u))G(u)\diff u \\
        + \ind_{\{\theta =1\}} \Big\{ F(0)^\dagger \Sigma_\ell (\mathbf{q}_t(0))G(0) +  F(1)^\dagger \Sigma_r (\mathbf{q}_t(1))G(1)\Big\}.
    \end{multline}
    Moreover, $(M_t)_{t\in [0,T]}$ is independent of the initial field $\mathcal{Y}_0$. In other words, the limit $(\mathcal{Y}_t)_{t\in [0,T]}$ is the unique (in law) solution to the generalized Ornstein-Uhlenbeck $\mathrm{OU}(\mathcal{S}_\theta ,D\Delta_\theta , \|\nabla\cdot\|_{\theta ,\mathbf{q}_t})$ process (\textit{cf.}~\cref{sec:appendix_OU}), which is the formal solution to the following stochastic partial differential equation
    \begin{equation}
        \partial_t\mathcal{Y}_t = D\Delta_\theta \mathcal{Y}_t + \nabla_\theta\big( \sqrt{2D\chi (\mathbf{q}_t)}\dot{W}_t\big) + \ind_{\{\theta =1\}}\Big(  \delta_0\sqrt{\Sigma_\ell (\mathbf{q}_t(0))}\dot{B}_t^\ell + \delta_1\sqrt{\Sigma_r (\mathbf{q}_t(1))}\dot{B}_t^r\Big)
    \end{equation}
    where $(\dot{W}_t)_{t\in [0,T]}$ is a two-dimensional space-time white noise and $B^\ell, B^r$ are two-dimensional Brownian motions, mutually independent and independent of the initial field $\mathcal{Y}_0$. Above, the square roots of the matrices $\chi$, $\Sigma_\ell$ and $\Sigma_r$  are well-defined as they are positive semidefinite.
\end{theorem}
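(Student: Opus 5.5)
We follow the classical martingale strategy, but we replace correlation estimates by a propagated variance bound. For $G\in\mathcal{S}_\theta$, Dynkin's formula applied to $\mathcal{Y}_t^N(G)$ under $N^2\mathscr{L}_N$ gives
\begin{equation*}
\mathcal{Y}_t^N(G)=\mathcal{Y}_0^N(G)+\int_0^t N^2\mathscr{L}_N\mathcal{Y}_s^N(G)\,\diff s+\mathcal{M}_t^N(G).
\end{equation*}
The gradient identities \eqref{eq:gradient_currents_bulk}, \eqref{eq:boundary_particle_current} and \eqref{eq:boundary_energy_current}, with the conventions \eqref{eq:conventions_boundary_values}, make the drift exactly linear in the field. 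Summing by parts, it equals $D\mathcal{Y}_s^N(\Delta_NG)$ plus boundary terms involving $\bar{\mathbf{Q}}_1,\bar{\mathbf{Q}}_{N-1}$. These boundary terms carry prefactors $N^{3/2}(G(1/N)-G(0))$, $N^{3/2-\theta}G(1/N)$, and so on. For each regime of $\theta$, the boundary conditions in \cref{defin:test_functions} are designed to cancel them up to errors $O(N^{-1/2})\,|\bar{\mathbf{Q}}_1|$, which are negligible. The one exception is the case $\theta<1$ with Dirichlet data, where $G(0)=0$ and the remaining term $N^{1/2-\theta}\partial_uG(0)\bar{\mathbf{Q}}_1$ must be treated separately. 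The plan is to replace that remainder by a time integral of a boundary average controlled through the martingale and variance bound; alternatively, one handles it via a discrete semigroup adapted to the boundary, working with the time-dependent test function $T^{\theta,N}_{t-s}G$ rather than $G$.

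The key step, and the novelty, is propagating \eqref{eq:assumption_variance_estimate}. We apply Dynkin to $\mathcal{Y}^N_s(G_s)$ with $G_s=T^{\theta,N}_{t-s}G$, the discrete semigroup solving the backward discrete heat equation with the discretized boundary conditions. Because the drift is closed and linear, $\mathcal{Y}_t^N(G)=\mathcal{Y}_0^N(T^{\theta,N}_tG)+\int_0^t\diff\mathcal{M}_s^N(T^{\theta,N}_{t-s}G)$ holds exactly. Hence
\begin{equation*}
\E\big[\mathcal{Y}_t^N(G)^2\big]\le C_0\|T^{\theta,N}_tG\|_N^2+\E\int_0^t\Gamma^N_s\big(T^{\theta,N}_{t-s}G\big)\,\diff s .
\end{equation*}
The carré du champ $\Gamma^N_s(H)$ is bounded deterministically by $C\big(\sum_x(\nabla_NH(x))^2/N+N^{1-\theta}|H(1/N)|^2+\dots\big)$, since the occupation variables are bounded by $\kappa$. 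The discrete energy identity $\int_0^t\|\nabla_NT^{\theta,N}_sG\|^2\,\diff s\le C\|G\|_N^2$, together with its boundary analogue, then gives $\sup_{t\le T}\E[\mathcal{Y}_t^N(G)^2]\le C\|G\|_N^2$. Tightness in $\mathcal{D}([0,T],\mathcal{S}_\theta')$ follows from Mitoma's criterion. It reduces to tightness of each real process $\mathcal{Y}^N_\cdot(G)$, which we get from Aldous' criterion: the variance bound controls the integral term, and Doob's inequality controls the martingale. Continuity of limit points follows because jumps are $O(N^{-1/2})$.

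To identify the limit, the main obstacle is the quadratic variation. We have $\langle\mathcal{M}^N(G)\rangle_t=\int_0^t\frac1N\sum_x\nabla_NG^\dagger\,\mathcal{C}_x(\eta_s)\,\nabla_NG\,\diff s$ plus boundary terms. Here $\mathcal{C}_x$ is a local quadratic function of $\eta_x,\eta_{x+1}$, such as $\xi_x(1-\xi_{x+1})+\xi_{x+1}(1-\xi_x)$ and energy-exchange rates. We must show that this converges in $L^1$ to the deterministic limit $\int_0^t\|\nabla_\theta G\|^2_{\theta,\mathbf{q}_s}\diff s$. We would first invoke the local equilibrium replacement announced for \cref{sec:local_equilibrium}: use spectral gap and equivalence of ensembles on mesoscopic boxes, uniformly including the boundary sites, to replace $\mathcal{C}_x$ by its expectation under $\nu_{\bar{\mathbf{Q}}^{\epsilon N}_x}$, where $\bar{\mathbf{Q}}^{\epsilon N}_x$ is the block average. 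We then replace block averages by $\mathbf{q}_s$ via \cref{thm:hydrodynamic_limit}. The averages of these expectations give $2D\chi$ in the bulk. At $\theta=1$, the boundary rates evaluated at site $1$ produce exactly $\Sigma_\ell(\mathbf{q}_s(0))$ and $\Sigma_r(\mathbf{q}_s(1))$, which requires identifying the boundary value of the hydrodynamic profile from the local averages near site $1$. For $\theta\ne1$, the boundary terms carry vanishing prefactors.

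Once the quadratic variation has a deterministic limit, every limit point satisfies the martingale identities $\mathcal{Y}_t(G)-\mathcal{Y}_0(G)-D\int_0^t\mathcal{Y}_s(\Delta_\theta G)\,\diff s=M_t(G)$ with $M_t(G)^2-\int_0^t\|\nabla_\theta G\|^2_{\theta,\mathbf{q}_s}\,\diff s$ a martingale. Lévy's characterization then makes $M$ Gaussian with independent increments, and independent of $\mathcal{Y}_0$ because its bracket is deterministic. Using time-dependent test functions $T^\theta_{t-s}G$, which lie in $\mathcal{S}_\theta$ by \cref{prop:heat_semigroup_properties}, yields \eqref{eq:decomposition_limit_point}. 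Uniqueness of the $\mathrm{OU}$ martingale problem from \cref{sec:appendix_OU} upgrades this to convergence of the whole sequence.
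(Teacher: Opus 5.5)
You follow the paper's architecture: an exact linear drift $\mathcal{Y}^N_s(A_N^\theta G)$ after centering, propagation of \eqref{eq:assumption_variance_estimate} through $P^N_t=e^{tA_N^\theta}$ and $\Gamma^N\le C_1\mathscr{E}_N^\theta$, Mitoma, uniform local equilibrium for the bracket, and uniqueness of the OU martingale problem. However, you mishandle the boundary part of the drift, which is the genuinely delicate point.

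Write $A_N^\theta G=DG''+R_N+\mathbf{b}^N_\ell(G)\delta_1+\mathbf{b}^N_r(G)\delta_{N-1}$. The left boundary then contributes $N^{-1/2}\mathbf{b}^N_\ell(G)^\dagger\bar{\mathbf{Q}}_1$ to the drift. Your claim that the boundary conditions reduce this to $O(N^{-1/2})|\bar{\mathbf{Q}}_1|$ is correct only for $\theta=1$ and for $\theta\ge2$.
\begin{itemize}
\item For $\theta>1$, the condition $\partial_uG(0)=0$ kills $N^{3/2}(G(\tfrac1N)-G(0))$ but not $DN^{3/2-\theta}G(\tfrac1N)$, because $G(0)$ is unconstrained. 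For $1<\theta\le 3/2$ this term does not even tend to zero.
\item For $\theta<1$, the remainder is not $N^{1/2-\theta}\partial_uG(0)^\dagger\bar{\mathbf{Q}}_1$. Up to negligible terms it is $DN^{1/2}(1-N^{-\theta})\,\partial_uG(0)^\dagger\bar{\mathbf{Q}}_1$, since $G(0)=0$ does not make $N^{3/2}G(\tfrac1N)\approx N^{1/2}\partial_uG(0)$ vanish.
\end{itemize}
In these regimes $\|A_N^\theta G\|_N^2\ge N^{-1}|A_N^\theta G(\tfrac1N)|^2$ is of order $N$, respectively $N^{3-2\theta}$. So your step ``the variance bound controls the integral term'' in the Aldous argument fails for $\theta<1$ and for $1<\theta<3/2$.

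Moreover, for $\theta<1$ and $1<\theta\le 3/2$ nothing in your proposal shows $N^{-1/2}\int_0^t\mathbf{b}^N_\ell(G)^\dagger\bar{\mathbf{Q}}_1(s)\,\diff s\to0$. You need this to identify the drift $D\int_0^t\mathcal{Y}_s(\Delta_\theta G)\,\diff s$ of limit points. Your fallback with the discrete semigroup does not help. It gives an exact identity at each fixed $t$, but $\int_0^t\diff M^N_s(P^N_{t-s}G)$ is not a martingale in $t$, so it yields no path tightness. You would also still need convergence of $P^N_tG$, of its discrete gradients and of its boundary values to those of $T^\theta_tG$.

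What is missing is a quantitative estimate showing that the time integral of $\bar{\mathbf{Q}}_1$ is much smaller than its pointwise size. Your phrase ``a time integral of a boundary average controlled through the martingale and variance bound'' points this way, but it must become \cref{lemma:boundary_part_tightness}. Set $B_N^\theta=-A_N^\theta$ and $K^N_h=(B_N^\theta)^{-1}(I-P^N_h)$. Apply Dynkin's formula to $H_r=K^N_{t-r}G$, which satisfies $\partial_rH_r+A_N^\theta H_r=-G$ and $H_t=0$; this gives
\[
\int_s^t\mathcal{Y}^N_r(G)\,\diff r=\mathcal{Y}^N_s(K^N_{t-s}G)+\int_s^t\diff M^N_r(K^N_{t-r}G).
\]
By the propagated variance bound, $\Gamma^N\le C_1\mathscr{E}_N^\theta$ and the spectral decomposition, both terms have second moment at most $C_\gamma(t-s)^{1+\gamma}\langle G,(B_N^\theta)^{\gamma-1}G\rangle_N$. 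Now take $G=\mathbf{u}\delta_1$:
\begin{itemize}
\item the variational formula for $(B_N^\theta)^{-1}$, with the boundary part $DN^{1-\theta}g(1)^2$ of $\mathscr{E}_N^\theta$, gives $\langle\delta_1,(B_N^\theta)^{-1}\delta_1\rangle_N\le N^{\theta-3}/D$;
\item the largest eigenvalue satisfies $\lambda_{\max}(B_N^\theta)\le 6DN^2$.
\end{itemize}
Hence
\[
\E_{\mu^N}\Big[\Big(\int_s^t\mathbf{u}^\dagger\bar{\mathbf{Q}}_1(r)\,\diff r\Big)^2\Big]\le C_\gamma|\mathbf{u}|^2(t-s)^{1+\gamma}N^{2\gamma+\theta-2}.
\]
Combine this with \cref{lemma:boundary_terms_estimate}: $|\mathbf{b}^N_\ell(G)|\le C_GN$ for $\theta<1$, $\le C_G$ for $\theta=1$, and $\le C_G(1+N^{2-\theta})$ for $\theta>1$. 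Choosing $\gamma$ small, the boundary integrals are bounded in $L^2$ by $C(t-s)^{1+\gamma}N^{-\delta}$ for some $\delta>0$ whenever $\theta\le2$; for $\theta>2$ the crude bound suffices. This gives both Kolmogorov--Chentsov tightness of the integral term and the vanishing of the boundary contributions in the limit. Without this estimate your argument covers only $\theta=1$ and $\theta>3/2$.
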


The proof of this result relies on the so-called \emph{entropy method} developed in \cite{guo1988nonlinear}. It consists in proving the tightness of the sequence of fluctuation fields $(\mathcal{Y}_t^N)_{N\ge 1}$, and then to characterize the limit points as solutions of a martingale problem. We defer the proof of tightness to \cref{sec:tightness}, and the identification of limit points to \cref{sec:identification}. We immediately deduce the following result when the initial distribution is the local Gibbs state defined in \eqref{eq:local_gibbs}.

\begin{corollary}
    Assume that the process starts from the initial Gibbs state defined in \eqref{eq:local_gibbs}. Then, the initial fluctuation field $\mathcal{Y}_0^N$ converges in distribution to a mean-zero Gaussian field $\mathcal{Y}_0$ with covariance given by
    \begin{equation*}
        \E[\mathcal{Y}_0(G)\mathcal{Y}_0(H)] = \int_0^1 G(u)^\dagger \chi (\mathbf{q}^\mathrm{ini}(u))H(u)\diff u
    \end{equation*}
    for any $G,H\in\mathcal{S}_\theta$. In particular, the limit $(\mathcal{Y}_t)_{t\in [0,T]}$ is a mean-zero Gaussian process with covariance given by
    \begin{equation*}
        \E[\mathcal{Y}_t(G)\mathcal{Y}_s(H)] = \int_0^1 T_t^\theta G(u)^\dagger \chi (\mathbf{q}^\mathrm{ini}(u))T_s^\theta H(u)\diff u + \int_0^s \langle \nabla_\theta T_{t-r}^\theta G,\nabla_\theta T_{s-r}^\theta H\rangle_{\theta ,\mathbf{q}_r}\diff r
    \end{equation*}
    for any $G,H\in\mathcal{S}_\theta$ and any $0\le s\le t\le T$.
\end{corollary}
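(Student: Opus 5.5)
The corollary splits into two independent tasks. First, identify the initial limit $\mathcal{Y}_0$ under the product measure \eqref{eq:local_gibbs} by a Lindeberg CLT. Second, feed this into \cref{thm:dynamical_fluctuations} and compute covariances from \eqref{eq:decomposition_limit_point}.

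For the first step, fix $G\in\mathcal{S}_\theta$. Under \eqref{eq:local_gibbs}, $\mathcal{Y}_0^N(G)=N^{-1/2}\sum_{x}G(x/N)^\dagger\bar{\mathbf{Q}}_x(0)$ is a sum of independent, centred, uniformly bounded variables, since $|\mathbf{Q}_x|\le 1+\kappa$. Its variance is
\begin{equation*}
\frac1N\sum_x G(x/N)^\dagger\chi(\mathbf{q}^\mathrm{ini}(x/N))G(x/N).
\end{equation*}
This is a Riemann sum, and it converges to $\int_0^1 G^\dagger\chi(\mathbf{q}^\mathrm{ini})G\,\diff u$ because $\chi$ is continuous (indeed polynomial in $\rho,\mathcal{E}$ and $\mathcal{E}^2/\rho$, hence bounded on $\mathcal{T}_\kappa$, continuous away from $\rho=0$ and tending to $0$ as $\rho\to 0$) and $\mathbf{q}^\mathrm{ini}$ is Riemann integrable. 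The summands are $O(N^{-1/2})$, so the Lindeberg condition holds trivially and each $\mathcal{Y}_0^N(G)$ is asymptotically Gaussian. By linearity in $G$, the same holds for any finite linear combination, so the finite-dimensional distributions converge to those of the centred Gaussian field with covariance given by polarization of the quadratic form above.

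To upgrade this to convergence in distribution in $\mathcal{S}_\theta'$, I would use Mitoma's criterion, which applies because $\mathcal{S}_\theta$ is nuclear Fréchet. Tightness of $\mathcal{Y}_0^N(G)$ for each $G$ follows from the uniform variance bound $\E[\mathcal{Y}_0^N(G)^2]\le C\|G\|_N^2\le C\tnorm{G}_0^2$. This same bound is \eqref{eq:assumption_variance_estimate}, so together with \eqref{eq:association_initial_distribution} (a weak law of large numbers obtained by Chebyshev and the same Riemann sums), all hypotheses of \cref{thm:dynamical_fluctuations} are satisfied. I expect this topological upgrade to be the only point needing care, and it is standard.

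For the second step, \cref{thm:dynamical_fluctuations} gives the limit \eqref{eq:decomposition_limit_point}, where $M$ is a Gaussian martingale independent of $\mathcal{Y}_0$. Hence $\mathcal{Y}_t(G)$ is a sum of two independent centred Gaussians, and the process is jointly Gaussian; the stochastic integrals $\int_0^t dM_r(F_r)$ against deterministic integrands are Gaussian with the right joint law, cf.\ \cref{sec:appendix_OU}. For $s\le t$, expanding $\E[\mathcal{Y}_t(G)\mathcal{Y}_s(H)]$, the cross terms vanish by independence and zero mean. The $\mathcal{Y}_0$ term gives $\int T_t^\theta G^\dagger\chi(\mathbf{q}^\mathrm{ini})T_s^\theta H$ by the first step, since $T_t^\theta G\in\mathcal{S}_\theta$ by \cref{prop:heat_semigroup_properties}. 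For the martingale term, apply the Itô isometry with the polarized quadratic covariation $\diff\langle M(F),M(K)\rangle_r=\langle\nabla_\theta F,\nabla_\theta K\rangle_{\theta,\mathbf{q}_r}\diff r$. Since the integral up to $s$ is orthogonal to the increments on $[s,t]$, this yields $\int_0^s\langle\nabla_\theta T_{t-r}^\theta G,\nabla_\theta T_{s-r}^\theta H\rangle_{\theta,\mathbf{q}_r}\diff r$, which is the stated formula.
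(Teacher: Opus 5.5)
Your argument is correct and follows the route the paper intends. The paper states this corollary as an immediate consequence of \cref{thm:dynamical_fluctuations}, having noted just before it that the product measure \eqref{eq:local_gibbs} satisfies \eqref{eq:association_initial_distribution} and \eqref{eq:assumption_variance_estimate}. You make explicit the steps it leaves implicit: the Lindeberg CLT and the Mitoma/tightness upgrade for $\mathcal{Y}_0^N$, then the covariance computation from \eqref{eq:decomposition_limit_point} using independence of $M$ and $\mathcal{Y}_0$ and the polarized It\^o isometry.
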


We carry on with the results in the stationary regime.

\subsection{Hydrostatics and non-equilibrium stationary fluctuations}

We conclude this section by stating the results about the hydrostatics and the non-equilibrium stationary fluctuations, \textit{i.e.}~when the initial distribution $\mu^N$ is taken to be the stationary measure $\mu_\mathrm{ss}^N$. We start with the hydrostatic limit, which states that the stationary measure $\mu_\mathrm{ss}^N$ is associated with the stationary solution of the hydrodynamic equation \eqref{eq:hydrodynamic_equation} with corresponding boundary conditions.

\begin{theorem}[Hydrostatic limit]\label{thm:hydrostatic_limit}
    For any $\delta >0$, and any continuous $G:[0,1]\longrightarrow\R^2$, we have
    \begin{equation*}
        \lim_{N\to +\infty} \mu_\mathrm{ss}^N \left( \eta\in\Omega \; :\; \bigg| \frac1N\sum_{x\in\Lambda_N}G\left(\frac xN\right)^\dagger \!\mathbf{Q}_x - \int_0^1 G(u)^\dagger\mathbf{q}_\mathrm{ss}(u)\diff u\bigg| >\delta\right) =0
    \end{equation*}
    where $\mathbf{q}_\mathrm{ss}(u)$ is the stationary solution of \eqref{eq:hydrodynamic_equation} with corresponding boundary conditions. It is given for all $u\in [0,1]$ by
    \begin{equation}\label{eq:stationary_solution}
        \mathbf{q}_\mathrm{ss}(u) = \begin{cases}
            (\mathbf{q}_r-\mathbf{q}_\ell )u + \mathbf{q}_\ell & \mbox{ if } 0\le \theta <1,\\
            (\mathbf{q}_r-\mathbf{q}_\ell )\frac{u+1}{3} + \mathbf{q}_\ell & \mbox{ if } \theta =1,\\
            \frac12(\mathbf{q}_\ell+\mathbf{q}_r ) & \mbox{ if } \theta >1.
        \end{cases}
    \end{equation}
\end{theorem}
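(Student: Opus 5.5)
The plan is a standard second-moment (Chebyshev) argument under $\mu_\mathrm{ss}^N$. Write
\begin{equation*}
\frac1N\sum_{x\in\Lambda_N} G\left(\tfrac xN\right)^\dagger\mathbf{Q}_x - \int_0^1 G^\dagger\mathbf{q}_\mathrm{ss} = \frac{1}{\sqrt N}\,\mathcal{Y}^N_{\mathrm{ss}}(G) + \Big(\frac1N\sum_{x} G\left(\tfrac xN\right)^\dagger\mathbf{m}_\mathrm{ss}^N(x) - \int_0^1 G^\dagger\mathbf{q}_\mathrm{ss}\Big),
\end{equation*}
where $\mathbf{m}^N_\mathrm{ss}(x)=(\rho^N_\mathrm{ss}(x),\mathcal{E}^N_\mathrm{ss}(x))^\dagger$ and $\mathcal{Y}^N_\mathrm{ss}$ is the fluctuation field centred under $\mu^N_\mathrm{ss}$. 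By density of smooth functions and the uniform bound $|\mathbf{Q}_x|\le\kappa$, it suffices to treat smooth $G$; the error is at most $2\kappa\|G-\tilde G\|_\infty$.

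\textbf{Deterministic term.} Use the explicit stationary profiles from the proposition above. Set $u=x/N$. Then $\rho_\mathrm{ss}^N(x)$ equals
\begin{equation*}
\frac{(\rho_r-\rho_\ell)u + (N^{\theta-1}-N^{-1})(\rho_\ell+\rho_r)+\rho_\ell}{2N^{\theta-1}+1-2N^{-1}},
\end{equation*}
and similarly for $\mathcal{E}^N_\mathrm{ss}$. This converges uniformly in $x$ to the three cases of \eqref{eq:stationary_solution}:
\begin{itemize}
\item for $\theta<1$, $N^{\theta-1}\to0$, which gives $(\mathbf{q}_r-\mathbf{q}_\ell)u+\mathbf{q}_\ell$;
\item for $\theta=1$, the limit is $\big((\mathbf{q}_r-\mathbf{q}_\ell)u+\mathbf{q}_\ell+\mathbf{q}_r\big)/3$, which equals $(\mathbf{q}_r-\mathbf{q}_\ell)\frac{u+1}{3}+\mathbf{q}_\ell$;
\item for $\theta>1$, dividing by $N^{\theta-1}$ gives $(\mathbf{q}_\ell+\mathbf{q}_r)/2$.
\end{itemize}
A Riemann-sum argument then sends the deterministic term to $0$.

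\textbf{Fluctuation term.} Here we need a variance bound under the NESS, $\mathrm{Var}_{\mu^N_\mathrm{ss}}\big(\frac1N\sum_x G(x/N)^\dagger\mathbf{Q}_x\big)\to0$. It suffices that $\mu_\mathrm{ss}^N$ satisfies \ref{eq:assumption_variance_estimate} with a constant independent of $N$, i.e.\ $\E_{\mu_\mathrm{ss}^N}[|\mathcal{Y}^N(G)|^2]\le C\|G\|_N^2$. The variance is then $O(1/N)$ and Chebyshev's inequality concludes. Even an $o(N)$ bound would be enough, which gives some slack.

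To get the estimate, use the propagation of the variance estimate proved in \cref{sec:tightness}. Start the dynamics from a product measure, e.g.\ $\nu^N_{\rho,\mathcal{E}}$ with fixed interior parameters, which satisfies \ref{eq:assumption_variance_estimate} with constant $C_0=\sup\|\chi\|$. The propagation, coming from Dynkin's formula together with the linear gradient structure, gives
\begin{equation*}
\E[\mathcal{Y}_t^N(G)^2]\le C\big(\|G\|_N^2+\E[\mathcal{Y}_0^N(T^N_tG)^2]\big)
\end{equation*}
uniformly in $t$. Here $T^N_t$ is the discrete semigroup, which is a contraction in $\|\cdot\|_N$, and the martingale quadratic variation is bounded by $C\int_0^t\|\nabla^N T^N_s G\|^2_N\,ds\le C\|G\|_N^2$ through the energy identity for the discrete heat equation.

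Then let $t\to\infty$. The law at time $t$ converges to $\mu^N_\mathrm{ss}$ by irreducibility at fixed $N$, and the centring means converge to $\mathbf{m}^N_\mathrm{ss}$. The bound therefore passes to $\mu^N_\mathrm{ss}$ with a constant uniform in $N$.

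The main obstacle is this uniform-in-time propagation, in particular controlling the boundary contributions to the quadratic variation when $\theta<1$. These carry the fast factor $N^{2-\theta}$, and one must check that the Dirichlet-type discrete semigroup makes $T_s^NG$ small near the boundary. If a direct argument fails, I would fall back on the Duhamel formula for the two-point function, since the gradient structure closes the one-point equations exactly.
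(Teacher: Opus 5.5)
Your proposal is correct and is essentially the paper's proof. It uses the same Chebyshev split and the same uniform convergence of the explicit profiles $\mathbf{m}^N_\mathrm{ss}$. The variance bound comes, as in the paper, from the Dynkin decomposition along $P^N_{t-s}G$, the estimate $\Gamma^N\le C_1\mathscr{E}_N^\theta$ and the energy identity. The only difference is that the paper runs this decomposition directly under $\mu^N_\mathrm{ss}$, using invariance of the law and $P^N_tG\to0$ at fixed $N$, whereas you start from a product measure and invoke convergence to equilibrium; that works equally well.

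Two small corrections. First, the boundary ``obstacle'' you flag is absent. The boundary part of $\Gamma^N$ carries $N^{1-\theta}$ (not $N^{2-\theta}$) and is dominated by the killing term $DN^{1-\theta}\bigl(|G(\tfrac1N)|^2+|G(\tfrac{N-1}{N})|^2\bigr)$ of $\mathscr{E}_N^\theta$. The energy identity therefore absorbs it, and no smallness of $P^N_sG$ near the boundary is needed. Your bound by $\int_0^t\|\nabla_NP^N_sG\|_N^2\,\mathrm{d}s$ alone is incomplete. The two-point Duhamel fallback is also unavailable here, since those equations do not close for this model. Second, for $\theta=1$ the numerator tends to $(\mathbf{q}_r-\mathbf{q}_\ell)u+2\mathbf{q}_\ell+\mathbf{q}_r$, not $(\mathbf{q}_r-\mathbf{q}_\ell)u+\mathbf{q}_\ell+\mathbf{q}_r$. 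Only the former equals $3\big((\mathbf{q}_r-\mathbf{q}_\ell)\frac{u+1}{3}+\mathbf{q}_\ell\big)$.
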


Define the stationary fluctuation field to be the random element of $\mathcal{S}_\theta'$ given by
\begin{equation*}
    \mathcal{Y}_\mathrm{ss}^N(G) = \frac{1}{\sqrt{N}}\sum_{x\in\Lambda_N} G\left(\frac xN\right)^\dagger \big( \mathbf{Q}_x(t) - \mathbf{m}_\mathrm{ss}^N(x)\big)\qquad \mbox{ with }\quad \mathbf{m}_\mathrm{ss}^N(x) = \begin{pmatrix}
        \rho_\mathrm{ss}^N(x) \\
        \mathcal{E}_\mathrm{ss}^N(x)
    \end{pmatrix}
\end{equation*}
for any $G\in\mathcal{S}_\theta$, where $\rho_\mathrm{ss}^N$ and $\mathcal{E}_\mathrm{ss}^N$ are the stationary profiles defined in \eqref{eq:stationary_empirical profiles}. Then the stationary fluctuations in the case $0\le \theta\le 1$ can be stated as follows.

\begin{theorem}[Non-equilibrium stationary fluctuations, $\theta\le 1$]\label{thm:stationary_fluctuations}
    Under the stationary measure~$\mu_\mathrm{ss}^N$, the stationary fluctuation field~$\mathcal{Y}_\mathrm{ss}^N$ converges in distribution to a mean-zero Gaussian field $\mathcal{Y}_\mathrm{ss}$ with covariance given by
    \begin{multline}\label{eq:covariance_dirichlet_stationary}
        \E\big[\mathcal{Y}_\mathrm{ss}(F)\mathcal{Y}_\mathrm{ss}(G)\big] = \int_0^1 F(u)^\dagger \chi (\mathbf{q}_\mathrm{ss}(u))G(u)\diff u \\
         + D\int_0^\infty \int_0^1 T_s^\theta F(u)^\dagger \partial_u^2\big(\chi (\mathbf{q}_\mathrm{ss}(u))\big)T_s^\theta G(u)\diff u\diff s \\
        + \ind_{\{\theta =1\}} \int_0^\infty \Big( T_s^\theta F(1)^\dagger \Xi_r T_s^\theta G(1) + T_s^\theta F(0)^\dagger \Xi_\ell T_s^\theta G(0)\Big)\diff s
    \end{multline}
    for any $F,G\in\mathcal{S}_\theta$, where $\Xi_\ell$ and $\Xi_r$ are the boundary matrices defined by
    \begin{align}
        & \Xi_\ell\coloneq \frac{(\kappa -2)(3-2\rho_\ell -\rho_r)(\rho_\ell\mathcal{E}_r-\rho_r\mathcal{E}_\ell)^2}{3\rho_\ell (2\rho_\ell +\rho_r)^2}\begin{pmatrix}
        0 & 0 \\
        0 & 1
        \end{pmatrix},\label{eq:Xil}\\
        & \Xi_r\coloneq \frac{(\kappa -2)(3-\rho_\ell-2\rho_r)(\rho_\ell\mathcal{E}_r-\rho_r\mathcal{E}_\ell)^2}{3\rho_r(\rho_\ell +2\rho_r)^2}\begin{pmatrix}
        0 & 0 \\
        0 & 1
        \end{pmatrix}. \label{eq:Xir}
    \end{align}
    Above, $\mathbf{q}_\mathrm{ss}$ is the stationary solution of \eqref{eq:hydrodynamic_equation} given in \eqref{eq:stationary_solution}.
\end{theorem}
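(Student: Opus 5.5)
The plan is to deduce the stationary result from \cref{thm:dynamical_fluctuations} by starting the dynamics from $\mu_\mathrm{ss}^N$ itself and sending time to infinity.

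\textbf{Step 1: the assumptions of \cref{thm:dynamical_fluctuations} hold under $\mu_\mathrm{ss}^N$.} Hypothesis \eqref{eq:association_initial_distribution} is exactly \cref{thm:hydrostatic_limit}, with $\mathbf{q}^\mathrm{ini}=\mathbf{q}_\mathrm{ss}$. Since $\mathbf{q}_\mathrm{ss}$ is stationary for \eqref{eq:hydrodynamic_equation}, we have $\mathbf{q}_t\equiv\mathbf{q}_\mathrm{ss}$ for all $t$. For \eqref{eq:assumption_variance_estimate} I would use the propagation argument of \cref{sec:tightness}. The Dynkin decomposition gives
\[
\E\big[\mathcal{Y}_t^N(G)^2\big]\le 2\,\E\big[\mathcal{Y}_0^N(T^{N}_tG)^2\big]+C\|G\|_N^2 ,
\]
with $T^N_t$ the discrete semigroup, which decays exponentially when $\theta\le1$. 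Starting from a product measure, letting $t\to\infty$, and using stationarity together with convergence to $\mu_\mathrm{ss}^N$, yields a uniform bound $\mu_\mathrm{ss}^N(\mathcal{Y}^N_\mathrm{ss}(G)^2)\le C_0\|G\|_N^2$. This bound also gives tightness of $\mathcal{Y}^N_\mathrm{ss}$ in $\mathcal{S}_\theta'$, by Mitoma's criterion and nuclearity. We therefore extract a subsequence converging to some $\mathcal{Y}_0$. The difference $\mathbf{m}^N_\mathrm{ss}-\E[\mathbf{Q}_x(t)]$ vanishes, since the mean is stationary.

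\textbf{Step 2: identification of the limit.} Along the subsequence, \cref{thm:dynamical_fluctuations} applies. For every $t$, the law of $\mathcal{Y}_t$ equals the law of $\mathcal{Y}_0$ by stationarity, and
\[
\mathcal{Y}_t(G)=\mathcal{Y}_0(T^\theta_tG)+\int_0^t dM_s(T^\theta_{t-s}G).
\]
Here $M$ is Gaussian with deterministic, time-homogeneous quadratic variation $\|\nabla_\theta\cdot\|^2_{\theta,\mathbf{q}_\mathrm{ss}}$, and it is independent of $\mathcal{Y}_0$. Let $t\to\infty$. The first term tends to $0$ in $L^2$, because $\E[\mathcal{Y}_0(F)^2]\le C_0\|F\|^2$ and $T_t^\theta G\to0$ uniformly and exponentially fast by \cref{prop:heat_semigroup_properties}. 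The second term is Gaussian with variance $\int_0^t\|\nabla_\theta T^\theta_sG\|^2_{\theta,\mathbf{q}_\mathrm{ss}}ds$, which converges. Hence $\mathcal{Y}_0$ is the mean-zero Gaussian field with covariance $\int_0^\infty\langle\nabla_\theta T_sF,\nabla_\theta T_sG\rangle_{\theta,\mathbf{q}_\mathrm{ss}}ds$. This limit is uniquely determined, so the whole sequence converges.

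\textbf{Step 3: rewriting the covariance in the form \eqref{eq:covariance_dirichlet_stationary}.} This computation is the main obstacle. Write $\chi=\chi(\mathbf{q}_\mathrm{ss}(u))$ and $f(s)=\int_0^1 T_sF^\dagger\chi\,T_sG\,du$. Differentiating with $\partial_sT_s=D\Delta$ and integrating by parts twice in $u$ gives
\[
-f'(s)=2D\int \nabla T_sF^\dagger\chi\nabla T_sG-D\int T_sF^\dagger\chi''T_sG+\text{boundary terms}.
\]
Integrating over $s\in(0,\infty)$, with $f(\infty)=0$, produces the first two terms of \eqref{eq:covariance_dirichlet_stationary}.

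When $\theta<1$, the boundary terms vanish because $T_sF=0$ at $0$ and $1$. When $\theta=1$, the Robin conditions $\partial_uT_sF(0)=T_sF(0)$ turn the boundary terms into quadratic forms such as $T_sF(0)^\dagger[\chi'(0)-2D\chi(0)]T_sG(0)$, using that $\chi$ is symmetric. These combine with the $\Sigma_\ell(\mathbf{q}_\mathrm{ss}(0))$ term of $\langle\cdot,\cdot\rangle_{\theta,\mathbf{q}}$, so I must check the identity
\[
\Xi_\ell=\Sigma_\ell(\mathbf{q}_\mathrm{ss}(0))-2D\chi(\mathbf{q}_\mathrm{ss}(0))+D\,\partial_u\chi(\mathbf{q}_\mathrm{ss})(0),
\]
and the analogous one on the right. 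This is an explicit but delicate algebraic verification using $\mathbf{q}_\mathrm{ss}(0)=\mathbf{q}_\ell+(\mathbf{q}_r-\mathbf{q}_\ell)/3$. The particle block cancels, as for the SSEP, leaving only the energy–energy entry, which should produce the $(\kappa-2)$ factor. I expect this verification, together with handling the sign conventions at $u=1$, to be the hardest part.
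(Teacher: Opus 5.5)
Your proposal is correct and follows the paper's proof essentially step for step: the hypotheses of \cref{thm:dynamical_fluctuations} via the hydrostatic limit and a stationary variance bound, the decomposition $\mathcal{Y}_\mathrm{ss}(G)=\mathcal{Y}_0(T_t^\theta G)+\int_0^t dM_s(T_{t-s}^\theta G)$ with $t\to\infty$, and the $\phi(t)$ integration by parts leading to $\Xi_\ell=\Sigma_\ell(\mathbf{q}_\mathrm{ss}(0))-2D\chi(\mathbf{q}_\mathrm{ss}(0))+D\,\partial_u\chi(\mathbf{q}_\mathrm{ss})(0)$. The only variation is that you get the variance bound under $\mu_\mathrm{ss}^N$ by propagating it from a product measure and using convergence to $\mu_\mathrm{ss}^N$ at fixed $N$, whereas the paper inserts $\mu_\mathrm{ss}^N$ directly into the orthogonal Dynkin decomposition and uses $P_t^NG\to 0$ at fixed $N$; both work.

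The identity you leave unchecked is exactly what \cref{appendix:robin_boundary_matrices} verifies, and it holds. The particle--particle and particle--energy entries cancel. With $(\rho,\mathcal{E})=\mathbf{q}_\mathrm{ss}(0)$, $\lambda=\mathcal{E}/\rho$ and $\lambda_\ell=\mathcal{E}_\ell/\rho_\ell$, the energy--energy entry reduces to $(\kappa-2)(1-\rho)\rho_\ell(\lambda-\lambda_\ell)^2$, which is \eqref{eq:Xil} once you substitute $\mathbf{q}_\mathrm{ss}(0)=(2\mathbf{q}_\ell+\mathbf{q}_r)/3$.
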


\begin{remark}
    The first term in \eqref{eq:covariance_dirichlet_stationary} corresponds to the local equilibrium fluctuations, while the second term corresponds to the non-equilibrium contributions (this term disappears in the equilibrium case $\rho_\ell=\rho_r$ and~$\mathcal{E}_\ell = \mathcal{E}_r$). In the Robin case $\theta =1$, there is an additional contribution coming from the boundary when $\kappa >2$ for the energy-energy fluctuations 
\end{remark}

\begin{remark}
    Notice that if we forget about the energy, the model reduces to the one-dimensional symmetric simple exclusion process with slow boundary, and diffusion coefficient $D=\kappa -1$. Therefore, the particle fluctuation field -- \textit{i.e.} the restriction of the field to test functions of the form~$G=(g,0)^\dagger$ -- must behave exactly as the fluctuation field of the SSEP with slow boundary. For the Dirichlet case $0\le \theta <1$, the formula we obtained is fully consistent with the results obtained in \cite{goncalves_non-equilibrium_2020}. In the Robin case $\theta =1$, it differs from the one obtained in \cite{franco_non-equilibrium_2019} and subsequently recalled in \cite{goncalves_non-equilibrium_2020} where non-zero boundary contributions are present. As explained in \cref{appendix:robin_boundary_matrices}, this inconsistency originates from a sign error in the proof of \cite[Theorem 2.8]{franco_non-equilibrium_2019}. By flipping the sign therein, we recover the cancellation of the boundary contributions in the Robin case $\theta =1$ for the SSEP, and this is consistent with our results.
\end{remark}

In the case $\theta >1$, the stationary fluctuations result is still a little incomplete, we can only fully characterize its fast modes, \textit{i.e.} its part acting on mean-zero test functions.

\begin{theorem}[Stationary fluctuations, $\theta >1$]\label{thm:stationary_fluctuations_neumann}
    Any function $G\in\mathcal{S}_\theta$ decomposes under the form~$G=\bar{G} + G^\circ$ where $\bar{G} = \int_0^1 G(u)\diff u$ and $G^\circ = G-\bar{G}$ is mean-zero. Then, under the stationary measure~$\mu_\mathrm{ss}^N$, the sequence $(\mathcal{Y}_\mathrm{ss}^N)_{N\ge 1}$ is tight and every limit point $\mathcal{Y}_\mathrm{ss}$ of this sequence satisfies the decomposition
    \begin{equation*}
        \mathcal{Y}_\mathrm{ss}(G) = \bar{G}^\dagger Z + \mathcal{Y}^\circ (G^\circ) 
    \end{equation*}
    where $Z$ is random vector in $\R^2$, and $\mathcal{Y}^\circ$ is a mean-zero Gaussian field on the subspace of mean-zero functions of $\mathcal{S}_\theta$ with covariance given by
    \begin{equation*}
        \E\big[\mathcal{Y}^\circ (F^\circ )\mathcal{Y}^\circ (G^\circ )\big] = \int_0^1 F^\circ (u)^\dagger \chi (\bar{\mathbf{q}})G^\circ (u)\diff u 
    \end{equation*}
    where $\bar{\mathbf{q}}\coloneq \frac12\mathbf{q}_\ell +\frac12\mathbf{q}_r$. Moreover, $Z$ and $\mathcal{Y}^\circ$ are independent.
\end{theorem}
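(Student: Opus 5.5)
Our plan is to view the stationary field as the time-$t$ field of a dynamics started from $\mu_\mathrm{ss}^N$ and apply \cref{thm:dynamical_fluctuations}. We first verify that $\mu_\mathrm{ss}^N$ satisfies the hypotheses of that theorem. For \eqref{eq:association_initial_distribution} we use \cref{thm:hydrostatic_limit}, which for $\theta>1$ gives the constant profile $\bar{\mathbf{q}}$. For \eqref{eq:assumption_variance_estimate} we rely on the propagation of the variance estimate from \cref{sec:tightness}: the variance bound at time $t$ is at most a constant (uniform in $t$) times the initial bound, plus a martingale contribution. Starting from any product measure and letting $t\to\infty$, the law of $\eta(t)$ tends to $\mu_\mathrm{ss}^N$ for fixed $N$, so the bound passes to $\mu_\mathrm{ss}^N$ with a constant independent of $N$. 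Since $\mathbf{m}_\mathrm{ss}^N$ is the exact stationary mean, the centerings agree. Tightness of $(\mathcal{Y}_\mathrm{ss}^N)$ in $\mathcal{S}_\theta'$ then follows from the uniform second-moment bound $\E[\mathcal{Y}_\mathrm{ss}^N(G)^2]\le C_0\|G\|_N^2$ together with Mitoma's criterion on the nuclear space $\mathcal{S}_\theta$.

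Next we identify a limit point $\mathcal{Y}_\mathrm{ss}$ along a subsequence. Run the dynamics from $\mu_\mathrm{ss}^N$. By stationarity $\mathcal{Y}^N_t\overset{d}{=}\mathcal{Y}^N_\mathrm{ss}$ for every $t$, and $\mathbf{q}_t\equiv\bar{\mathbf{q}}$. By \cref{thm:dynamical_fluctuations} (with $\mathcal{Y}_0=\mathcal{Y}_\mathrm{ss}$), for every $t$,
\begin{equation*}
\mathcal{Y}_\mathrm{ss}(G)\overset{d}{=}\mathcal{Y}_\mathrm{ss}(T_t^\theta G)+\int_0^t \diff M_s(T^\theta_{t-s}G),
\end{equation*}
where $M$ is Gaussian, independent of $\mathcal{Y}_\mathrm{ss}$, and has quadratic variation $2D\int_0^t\int_0^1 (\nabla T_{r}^\theta G)^\dagger\chi(\bar{\mathbf{q}})\nabla T_r^\theta G\,\diff u\,\diff r$ (no boundary term since $\theta>1$). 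Because $T^\theta_t$ preserves constants, $\bar G$ is invariant and the noise term only sees $G^\circ$, with $T_t^\theta G=\bar G+T_t^\theta G^\circ$. Letting $t\to\infty$, \cref{prop:heat_semigroup_properties}(2) gives $T_t^\theta G^\circ\to0$ exponentially fast in all seminorms (via the cosine expansion). Combined with the tightness bound, $\mathcal{Y}_\mathrm{ss}(T^\theta_tG^\circ)\to0$ in $L^2$. The variance of the noise term converges to
\begin{equation*}
2D\int_0^\infty\int_0^1 (\partial_uT_r^\theta G^\circ)^\dagger\chi(\bar{\mathbf{q}})\partial_uT_r^\theta G^\circ = \int_0^1 (G^\circ)^\dagger\chi(\bar{\mathbf{q}})G^\circ .
\end{equation*}
This identity follows by integrating $\frac{\diff}{\diff r}\int (T_rG^\circ)^\dagger\chi T_rG^\circ$, using the Neumann boundary conditions and the symmetry of the constant matrix $\chi(\bar{\mathbf{q}})$. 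Polarization gives the stated covariance.

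To obtain the decomposition and the independence, we apply the identity jointly to the pair $(\bar G,G^\circ)$, i.e. to the vector $(\mathcal{Y}_\mathrm{ss}(e_1),\mathcal{Y}_\mathrm{ss}(e_2),\mathcal{Y}_\mathrm{ss}(G_1^\circ),\dots)$, where $e_i$ are the constant unit vectors. Jointly in law we get $(Z,\mathcal{Y}_\mathrm{ss}(G^\circ))\overset{d}{=}(Z,\,\mathcal{Y}_\mathrm{ss}(T_tG^\circ)+\text{noise}_t)$, with $Z\coloneq(\mathcal{Y}_\mathrm{ss}(e_1),\mathcal{Y}_\mathrm{ss}(e_2))$ and the noise independent of $\mathcal{Y}_\mathrm{ss}$, hence of $Z$. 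Sending $t\to\infty$ shows that $Z$ is independent of the Gaussian field $\mathcal{Y}^\circ\coloneq\mathcal{Y}_\mathrm{ss}|_{\text{mean-zero}}$, and linearity gives $\mathcal{Y}_\mathrm{ss}(G)=\bar G^\dagger Z+\mathcal{Y}^\circ(G^\circ)$. Constants do belong to $\mathcal{S}_\theta$ when $\theta>1$. The law of $Z$ is left undetermined, which matches the statement.

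We expect the main obstacle to be the uniform-in-$N$ variance estimate under $\mu_\mathrm{ss}^N$. It requires the propagation bound of \cref{sec:tightness} to hold with constants independent of $t$, and in the Neumann case the conserved total mass does not dissipate. Our first attempt would be to check that the $\|G\|_N^2$ bound propagates using only the contraction property of the discrete Neumann semigroup, together with a martingale variance bounded by $\int_0^t\|\nabla T_{t-s}G\|^2\diff s\le\|G\|^2$. This is the same energy identity as above, applied at the discrete level.
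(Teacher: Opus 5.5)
Your proposal is correct and follows essentially the paper's proof. You verify \eqref{eq:association_initial_distribution}--\eqref{eq:assumption_variance_estimate} for $\mu_\mathrm{ss}^N$, apply \cref{thm:dynamical_fluctuations} along a subsequence to get $\mathcal{Y}_\mathrm{ss}(G)\overset{d}{=}\mathcal{Y}_\mathrm{ss}(T_t^\theta G)+\int_0^t\diff M_s(T_{t-s}^\theta G)$, and then let $t\to\infty$ using $T_t^\theta G\to\bar G$ and the energy identity. The only variation is the uniform variance bound: you start from a product measure and let $t\to\infty$ in \cref{prop:variance_estimate_dynamics}, whereas the paper runs the same decomposition directly under $\mu_\mathrm{ss}^N$ and uses that the killed discrete semigroup $P_t^N$ tends to $0$ for fixed $N$ even when $\theta>1$ (so the non-dissipation of total mass you worry about does not occur at the discrete level); your joint-law argument for the independence of $Z$ and $\mathcal{Y}^\circ$ is also more explicit than the paper's.
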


\begin{remark}
    In the Neumann case, there is an additional contribution coming from the mean of the test function, under the form of the vector $Z$, whose law is not fully characterized. Upon proving that the stationary measure satisfies a central limit theorem for the total number of particles and total energy, one could fully characterize this contribution as a Gaussian random variable and deduce a full convergence result rather than simply a characterization of limit points. One way to prove this would be to estimate the two-point correlation function of the stationary measure, and for this, we cannot apply the strategy of \cite{baldasso_exclusion_2017} since the equations for the two-point correlation function are not closed. This is a technical difficulty that we leave for future work.
\end{remark}

\begin{remark}
    Stationary fluctuations for the SSEP with slow boundaries in the regime $\theta >1$ are described in \cite[Theorem 2.9]{goncalves_non-equilibrium_2020}. This result is not consistent with \cref{thm:stationary_fluctuations_neumann}, the reason being that fluctuation for the total number of particles is not accounted for. In particular, the result \cite[Theorem 2.9]{goncalves_non-equilibrium_2020} cannot hold for arbitrary test functions, but only for mean-zero test functions, and the present paper fills this gap.
    
    A complete result, that is a full convergence result to a Gaussian field, would require to identify the total-mass fluctuations. While this should be possible for the SSEP with slow boundaries thanks to the estimates on the two-point correlation function of the stationary measure, this is not possible for the present model as we don't have such estimates.
\end{remark}

We defer the proof of these results to \cref{sec:stationary_fluctuations}. It relies mostly on proving that the stationary measure satisfies the assumptions \ref{eq:association_initial_distribution} and \ref{eq:assumption_variance_estimate}, and then applying the results of \cref{thm:dynamical_fluctuations} to the stationary case together with the heat semigroup properties of \cref{prop:heat_semigroup_properties}.

\section{Dynkin's martingale}
\label{sec:dynkin_martingale}

We start by introducing a class of martingales associated to the fluctuation fields $\mathcal{Y}_t^N$, known as \emph{Dynkin's martingales}. We rely on the following result, whose proof can be found in \cite[Appendix 1.5]{kipnis_scaling_1999}.

\begin{proposition}[Dynkin's formula, \cite{kipnis_scaling_1999}]\label{prop:dynkin_formula}
    For any function $F:\R_+\times\Omega_N\longrightarrow\R$ that is differentiable in the first variable, the process defined by
    \begin{equation*}
        M_t^F = F(t,\eta (t)) - F(0,\eta (0)) - \int_0^t (\partial_s + N^2\mathscr{L}_N)F(s,\eta (s))\diff s
    \end{equation*}
    is a mean-zero martingale with respect to the natural filtration of the process $(\eta (t))_{t\ge 0}$, and its quadratic variation is given by
    \begin{equation*}
        \langle M^F\rangle_t = \int_0^t N^2\big( \mathscr{L}_N F^2(s,\eta (s)) - 2F(s,\eta (s))\mathscr{L}_N F(s,\eta (s))\big)\diff s.
    \end{equation*}
\end{proposition}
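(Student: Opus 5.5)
This is the classical Dynkin formula for a continuous-time Markov chain on the finite state space $\Omega_N$, extended to time-dependent functions. The plan is to check the martingale property directly from the Kolmogorov equations and then identify the quadratic variation through a second Dynkin martingale.

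Since $\Omega_N$ is finite, the generator $N^2\mathscr{L}_N$ is a bounded operator. The semigroup $P_t=e^{tN^2\mathscr{L}_N}$ therefore satisfies $\frac{\diff}{\diff t}P_t f = P_t N^2\mathscr{L}_N f = N^2\mathscr{L}_N P_t f$ for every $f$, with no domain issues. I will also assume $\partial_s F$ is bounded on $[0,T]\times\Omega_N$ (or continuous in $s$), as is implicit in the statement, so that all integrals are finite and exchanges of derivatives and expectations are legitimate.

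\textbf{Martingale property.} By the Markov property it suffices to show, for $s\le t$,
\begin{equation*}
\E_\eta\Big[F(t,\eta(t-s)) - F(s,\eta(0)) - \int_0^{t-s}(\partial_r+N^2\mathscr{L}_N)F(s+r,\eta(r))\diff r\Big]=0
\end{equation*}
for every starting point $\eta$. Setting $\phi(r)=P_r F(s+r,\cdot)(\eta)$, the product rule combined with the Kolmogorov forward equation gives
\begin{equation*}
\phi'(r)=P_r\big[(\partial_r+N^2\mathscr{L}_N)F(s+r,\cdot)\big](\eta).
\end{equation*}
Integrating from $0$ to $t-s$ and using Fubini then yields the identity. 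The mean-zero property is the case $s=0$, and integrability follows from boundedness.

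\textbf{Quadratic variation.} I apply the first part to $F^2$, which produces a martingale
\begin{equation*}
F^2(t,\eta_t)-F^2(0,\eta_0)-\int_0^t(2F\partial_sF+N^2\mathscr{L}_NF^2)\diff s.
\end{equation*}
Next I expand $(M^F_t)^2$ using $F(t,\eta_t)=M_t^F+F(0,\eta_0)+\int_0^t(\partial_s+N^2\mathscr{L}_N)F\diff s$. Itô's product rule for the semimartingale $F(t,\eta_t)$, whose finite-variation part is absolutely continuous, gives
\begin{equation*}
(M_t^F)^2-\int_0^t N^2\big(\mathscr{L}_NF^2-2F\mathscr{L}_NF\big)(s,\eta_s)\diff s
\end{equation*}
as a martingale. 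In the resulting cross terms, the $\partial_s$ contributions cancel between $2F\partial_sF$ and $2F(\partial_s+N^2\mathscr{L}_N)F$. Alternatively, one can compute $\E[(M^F_{t+h}-M^F_t)^2\mid\mathcal{F}_t]=hN^2(\mathscr{L}_NF^2-2F\mathscr{L}_NF)+o(h)$ from the jump rates and sum over a partition.

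Since the predictable quadratic variation is the unique predictable finite-variation process compensating $(M^F)^2$, this identifies $\langle M^F\rangle_t$. The only delicate point is this cancellation bookkeeping in the expansion of $(M^F)^2$, which is routine. It is also the reason the paper simply cites \cite[Appendix 1.5]{kipnis_scaling_1999}.
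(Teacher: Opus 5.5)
Your proof is correct. It is the standard argument of the reference the paper cites (\cite{kipnis_scaling_1999}, Appendix 1.5): you get the martingale property from the Markov property and $\frac{\diff}{\diff r}P_rF(s+r,\cdot)=P_r(\partial_r+N^2\mathscr{L}_N)F(s+r,\cdot)$, then identify the bracket by applying the first part to $F^2$, expanding $(M^F)^2$ so the $\partial_s$ terms cancel, and invoking Doob--Meyer uniqueness. The paper itself gives no proof beyond this citation, and your extra requirement that $\partial_sF$ be locally bounded (e.g.\ continuous in $s$) is a sensible precision, since mere differentiability does not make $\int_0^t\partial_sF(s,\eta(s))\diff s$ well defined.
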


Applying this result to the fluctuation field, and after some computations, we obtain that for any $G\in\mathcal{S}_\theta$, the process defined for $t\in [0,T]$ by
\begin{equation}\label{def:dynkin_martingale}
    M_t^N(G) = \mathcal{Y}_t^N(G) - \mathcal{Y}_0^N(G) - \int_0^t \mathcal{Y}_s^N(A_N^\theta G)\diff s
\end{equation}
where $A_N^\theta$ is the operator that acts on functions $f:\Lambda_N\longrightarrow\R$ via 
\begin{equation*}
    A_N^\theta f(x) = \begin{cases}
        DN^2 \big( f(2)-f(1)\big) - DN^{2-\theta} f(1)& \mbox{ if } x=1\\
        DN^2\big( f(x-1)-2f(x) +f(x+1)\big) & \mbox{ if }x\in \{2,\hdots ,N-2\},\\
        DN^2\big( f(N-2)-f(N-1)\big) -DN^{2-\theta}f(N-1) & \mbox{ if } x=N-1.
    \end{cases}
\end{equation*}
Then, in the integral term of formula \eqref{def:dynkin_martingale}, we let $A_N^\theta$ act componentwise, and with a slight abuse of notation, we identify $G\in\mathcal{S}_\theta$ with the vector $\big( G(\frac xN)\big)_{x\in\Lambda_N}$. In matrix form, we can write 
\begin{equation}\label{eq:matrix_form_A_N}
    A_N^\theta = DN^2 \begin{pmatrix}
        -\bigl(1+N^{-\theta}\bigr) & 1 & 0 & \cdots & 0 & 0\\
        1 & -2 & 1 & \ddots & \vdots & \vdots\\
        0 & 1 & -2 & \ddots & 0 & 0\\
        \vdots & \ddots & \ddots & \ddots & 1 & 0\\
        0 & \cdots & 0 & 1 & -2 & 1\\
        0 & \cdots & 0 & 0 & 1 & -\bigl(1+N^{-\theta}\bigr)
    \end{pmatrix}.
\end{equation}
This matrix is clearly symmetric, hence the operator $A_N^\theta$ is self-adjoint with respect to the discrete inner product \eqref{eq:discrete_inner_product}. We introduce the Dirichlet form associated to the operator $A_N^\theta$, namely
\begin{equation*}
    \mathscr{E}_N^\theta (G) \coloneq - \langle G,A_N^\theta G\rangle_N.
\end{equation*}
Thanks to the self-adjointness of $A_N^\theta$, the Dirichlet form reduces to
\begin{equation}\label{eq:dirichlet_form_A_N}
    \mathscr{E}_N^\theta (G) = \frac{D}{N}\sum_{x=1}^{N-2} \big| \nabla_NG\big( \tfrac xN\big)\big|^2 + DN^{1-\theta}\left( \big| G\big(\tfrac 1N\big)\big|^2 + \big| G\big(\tfrac{N-1}{N}\big)\big|^2\right)
\end{equation}
where $\nabla_N$ is the discrete gradient operator defined by $\nabla_NG(\frac xN) = N\big( G(\frac{x+1}{N})-G(\frac xN)\big)$ for any~$x\in\{1,\hdots ,N-2\}$. This quantity is clearly positive, so the operator $A_N^\theta$ is negative definite.

\medskip 

The matrix $A_N^\theta$ is the generator of a continuous-time symmetric random walk on $\Lambda_N$, which jumps at rate $DN^2$ to its nearest neighbours, and is killed at the boundaries at rate $DN^{2-\theta}$. This is a sub-Markovian process (off diagonal coefficients are non-negative, and the sum on each row is non-positive), so if we define the semigroup $P_t^N = e^{tA_N^\theta}$, then the sum of each row of $P_t^N$ is less than or equal to $1$. In particular, it satisfies the $\ell^\infty$-contraction property 
\begin{equation*}
    \| P_t^NG\|_\infty \le \| G\|_\infty \qquad\mbox{ for any }G\in\mathcal{S}_\theta.
\end{equation*}
Moreover, since $A_N^\theta$ is self-adjoint, the semigroup $P_t^N$ is also self-adjoint, and it satisfies the~$\ell^2$-contraction property
\begin{equation}\label{eq:contraction_property_semigroup}
    \| P_t^NG\|_N \le \| G\|_N \qquad\mbox{ for any }G\in\mathcal{S}_\theta.
\end{equation}
Indeed, differentiating the function $t\longmapsto \| P_t^NG\|_N^2$ gives $-2\mathcal{E}_N^\theta (P_t^NG)$. Integrating this identity over $[0,t]$ yields that
\begin{equation*}
    \| P_t^NG\|_N^2 + 2\int_0^t \mathcal{E}_N^\theta (P_s^NG)\diff s = \| G\|_N^2
\end{equation*}
and the bound \eqref{eq:contraction_property_semigroup} readily follows.

\medskip

\cref{prop:dynkin_formula} also gives the expression of the quadratic variation of the martingale~$M_t^N(G)$, which is given by
\begin{equation*}
    \langle M^N(G)\rangle_t = \int_0^t \Gamma^N(\eta (s),G)\diff s \quad\mbox{ with }\quad \Gamma^N(\eta (s),G) = N^2\mathscr{L}_N \mathcal{Y}_s^N(G)^2 - 2N^2\mathcal{Y}_s^N(G)\mathscr{L}_N \mathcal{Y}_s^N(G).
\end{equation*}
We have the following estimate on the quadratic variation.

\begin{proposition}\label{prop:estimate_quadratic_variation}
    There exists a constant $C_1=C_1(\kappa ,\rho_\ell, \mathcal{E}_\ell ,\rho_r,\mathcal{E}_r)>0$ such that for any $G\in\mathcal{S}_\theta$ and any $\eta\in\Omega_N$, we have
    \begin{equation*}
        \Gamma^N(\eta ,G) \le C_1\mathscr{E}_N^\theta (G).
    \end{equation*}
\end{proposition}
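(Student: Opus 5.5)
The plan is to compute the carré du champ $\Gamma^N(\eta,G)$ explicitly, using the standard identity $\mathscr{L}(F^2)-2F\mathscr{L}F=\sum_{\text{transitions}} c(\eta\to\eta')\,[F(\eta')-F(\eta)]^2$. This identity holds for any jump generator. Since $\mathcal{Y}_s^N(G)$ is linear in $\mathbf{Q}$, the deterministic centering $\mathbf{m}_s^N$ cancels in the increments. Hence $\Gamma^N(\eta,G)=N\sum_{\text{transitions}} c\,\big[\sum_x G(x/N)^\dagger(\mathbf{Q}_x(\eta')-\mathbf{Q}_x(\eta))\big]^2$, where the factor $N^2\cdot N^{-1}=N$ comes from the time acceleration and the $N^{-1/2}$ normalisation squared. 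I will then split $\Gamma^N$ into a bulk part from $\mathscr{L}_0$ and two boundary parts from $N^{-\theta}\mathscr{L}_\ell$ and $N^{-\theta}\mathscr{L}_r$, and bound each by the corresponding term of \eqref{eq:dirichlet_form_A_N}.

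\textbf{Bulk part.} A particle jump across the bond $\{x,x+1\}$ changes $\mathbf{Q}_x,\mathbf{Q}_{x+1}$ by $\pm(\mathbf{Q}_{x+1}-\mathbf{Q}_x)$. The increment of the linear functional is therefore $\big(G(\tfrac{x+1}{N})-G(\tfrac xN)\big)^\dagger(\mathbf{Q}_x-\mathbf{Q}_{x+1})$, which is bounded in absolute value by $C\kappa\,|G(\tfrac{x+1}{N})-G(\tfrac xN)|$. An energy transfer $x\to y$ with $|x-y|=1$ changes $\eta_x$ by $-1$ and $\eta_y$ by $+1$. Its increment is thus $G_2(y/N)-G_2(x/N)$, again bounded by $|G(\tfrac{x+1}{N})-G(\tfrac xN)|$. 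All rates in \eqref{eq:rates} are bounded by constants depending only on $\kappa$, namely $\kappa-1$ and $(\kappa-1)^2$. Summing over bonds gives a bulk contribution of at most $C(\kappa)\,N\sum_{x=1}^{N-2}|G(\tfrac{x+1}{N})-G(\tfrac xN)|^2 = C(\kappa)\frac1N\sum_x|\nabla_NG(\tfrac xN)|^2$. This is dominated by $C\,\mathscr{E}_N^\theta(G)/D$.

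\textbf{Boundary part.} Each boundary transition only modifies $\mathbf{Q}_1$ (respectively $\mathbf{Q}_{N-1}$), and it does so by a vector of norm at most $C\kappa$. The increment is therefore bounded by $C|G(1/N)|$ (respectively $C|G(\tfrac{N-1}{N})|$). The total boundary rates are bounded by a constant depending on $\kappa,\rho_\ell,\mathcal{E}_\ell,\rho_r,\mathcal{E}_r$, and they carry the factor $N^{-\theta}$. The boundary contribution is thus at most $C\,N\cdot N^{-\theta}\big(|G(\tfrac1N)|^2+|G(\tfrac{N-1}N)|^2\big)$, which is exactly the boundary term $DN^{1-\theta}(\cdots)$ of \eqref{eq:dirichlet_form_A_N} up to a constant. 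Adding the two parts yields $\Gamma^N(\eta,G)\le C_1\mathscr{E}_N^\theta(G)$ uniformly in $\eta$.

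There is no real obstacle here. The only care needed is the bookkeeping of the $N$-powers, so that the bulk and boundary scalings match the two terms of \eqref{eq:dirichlet_form_A_N}. Each increment must also be written as a difference of neighbouring values of $G$ rather than of $G$ itself; this holds because the bulk dynamics conserves both quantities locally.
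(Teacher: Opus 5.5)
Your proposal is correct and follows essentially the paper's route. The paper also expands $\Gamma^N$ via the carr\'e du champ into a bulk quadratic form $\frac1N\sum_{x}\nabla_NG(\tfrac xN)^\dagger\Upsilon_x(\eta)\nabla_NG(\tfrac xN)$ plus boundary terms $N^{1-\theta}G(\tfrac1N)^\dagger B_\ell(\eta_1)G(\tfrac1N)$ (and the analogous right term), whose matrices are uniformly bounded, and then compares term by term with \eqref{eq:dirichlet_form_A_N}. The only difference is that you bound each transition's squared increment directly rather than recording the exact matrices $\Upsilon_x$, $B_\ell$, $B_r$; the paper needs those matrices later to identify the limiting quadratic variation, but they are not needed for this estimate.
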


\begin{proof}
    After some tedious computations, we obtain that for any $\eta\in\Omega_N$ and any $G\in\mathcal{S}_\theta$, we have
    \begin{multline}\label{eq:expression_GammaN}
        \Gamma^N(\eta ,G) = \frac1N\sum_{x=1}^{N-2} \nabla_NG\big( \tfrac xN\big)^\dagger \Upsilon_x(\eta )\nabla_NG\big( \tfrac xN\big) \\
        + N^{1-\theta} G\big( \tfrac1N\big)^\dagger B_\ell (\eta_1)G\big( \tfrac1N\big) + N^{1-\theta}G\big( \tfrac{N-1}{N}\big)^\dagger B_r (\eta_{N-1})G\big( \tfrac{N-1}{N}\big)
    \end{multline}
    where $\Upsilon_x(\eta )$ is the $2\times 2$ matrix given by
    \begin{multline}\label{eq:matrix_Upsilon}
    \Upsilon_x(\eta )\coloneq c_{x,x+1}^p (\eta )\begin{pmatrix}
      (\xi_x-\xi_{x+1})^2 & (\xi_x-\xi_{x+1})(\eta_x-\eta_{x+1}) \\
      (\xi_x-\xi_{x+1})(\eta_x-\eta_{x+1}) & (\eta_x-\eta_{x+1})^2
   \end{pmatrix} \\ 
   + \big( c_{x\to x+1}^e(\eta )+c_{x+1\to x}^e(\eta )\big) \begin{pmatrix}
      0 & 0 \\
      0 & 1
   \end{pmatrix},
    \end{multline}
    the matrix $B_\ell (\eta_1)$ is given by
    \begin{multline}\label{eq:boundarymatrix_left}
        B_\ell (\eta_1) \coloneq D\begin{pmatrix}
       (1-\rho_\ell )\xi_1 +\rho_\ell(1-\xi_1) &  (1-\rho_\ell )\eta_1 + \mathcal{E}_\ell(1-\xi_1 ) \\
       (1-\rho_\ell)\eta_1 + \mathcal{E}_\ell(1-\xi_1 ) &  (1-\rho_\ell )\eta_1^2 + (1-\xi_1 )m_2(\rho_\ell ,\mathcal{E}_\ell)
   \end{pmatrix} \\ + \big( (\mathcal{E}_\ell -\rho_\ell )(\kappa\xi_1 -\eta_1 )+ (\kappa\rho_\ell -\mathcal{E}_\ell)(\eta_1 -\xi_1)\big) \begin{pmatrix}
      0 & 0 \\
      0 & 1
   \end{pmatrix},
    \end{multline}
    and the matrix $B_r (\eta_{N-1})$ is defined similarly. As all the entries of the matrices $\Upsilon_x$, $B_\ell$ and $B_r$ are bounded, uniformly in $\eta\in\Omega_N$, by constants that depend only on the parameters $\kappa$, $\rho_\ell$,~$\rho_r$,~$\mathcal{E}_\ell$ and $\mathcal{E}_r$, the result follows easily.
\end{proof}

\section{Tightness}
\label{sec:tightness}

We start by proving that the variance estimate \eqref{eq:assumption_variance_estimate} is propagated along the dynamics, which is a crucial step in the proof of tightness.

\begin{proposition}\label{prop:variance_estimate_dynamics}
    There exists a constant $C_2>0$ such that, for any $G\in\mathcal{S}_\theta$, we have
    \begin{equation}
        \sup_{t\ge 0} \E_{\mu^N} \left[ \big| \mathcal{Y}_t^N(G)\big|^2\right] \le C_2 \| G\|_{N}^2.
    \end{equation}
\end{proposition}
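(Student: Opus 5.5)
The plan is a time-dependent Dynkin martingale built from the discrete semigroup $P_t^N=e^{tA_N^\theta}$. The key feature of the model is that $N^2\mathscr{L}_N$ acts \emph{exactly linearly} on the centred field: by the local conservation laws \eqref{eq:local_conservation_laws} and the gradient currents \eqref{eq:gradient_currents_bulk}, \eqref{eq:boundary_particle_current}, \eqref{eq:boundary_energy_current}, we have $N^2\mathscr{L}_N\mathcal{Y}^N(G)=\mathcal{Y}^N(A_N^\theta G)$ up to a deterministic term. This term disappears once we subtract the mean $\mathbf{m}_t^N$, since $\mathbf{m}_t^N$ solves the same linear equation with the same constant boundary sources. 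Hence \eqref{def:dynkin_martingale} holds with no error term.

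Fix $t>0$ and $G$, and set $F(s,\eta)=\mathcal{Y}_s^N(P_{t-s}^NG)$ for $s\in[0,t]$. Here the time dependence enters through both $\mathbf{m}_s^N$ and $P_{t-s}^N G$. Since $\partial_s P_{t-s}^NG=-A_N^\theta P_{t-s}^NG$, the drift $(\partial_s+N^2\mathscr{L}_N)F$ cancels exactly; the contribution of $\partial_s\mathbf{m}_s^N$ is absorbed by the mean equation. By \cref{prop:dynkin_formula}, $\mathcal{Y}_t^N(G)-\mathcal{Y}_0^N(P_t^NG)$ is then a mean-zero martingale at time $t$, with quadratic variation $\int_0^t\Gamma^N(\eta(s),P_{t-s}^NG)\,\diff s$. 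This is the same computation as for $\Gamma^N$, since only the test function differs and it is frozen at each $s$. I would double-check this point: the carré du champ of a time-dependent function at time $s$ is that of the frozen function.

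Taking second moments and using orthogonality of the martingale increment with $\mathcal{F}_0$ gives
\begin{equation*}
\E_{\mu^N}\big[|\mathcal{Y}_t^N(G)|^2\big]= \E_{\mu^N}\big[|\mathcal{Y}_0^N(P_t^NG)|^2\big]+\E_{\mu^N}\Big[\int_0^t\Gamma^N(\eta(s),P_{t-s}^NG)\,\diff s\Big].
\end{equation*}
We bound the two terms separately.

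\textbf{Initial term.} This needs \eqref{eq:assumption_variance_estimate} applied to $P_t^NG$, which is not in $\mathcal{S}_\theta$. But $\mathcal{Y}_0^N$ only sees values on $\frac1N\Lambda_N$, so I interpret \eqref{eq:assumption_variance_estimate} as a bound on the quadratic form $H\mapsto\E[|\mathcal{Y}_0^N(H)|^2]$ on $\R^{2(N-1)}$. To justify this, note that any vector on the grid is the restriction of some element of $\mathcal{S}_\theta$, for instance by interpolating with bump functions supported away from the boundary. This is the main technical point to check. Granting it, the contraction \eqref{eq:contraction_property_semigroup} gives the bound $C_0\|P_t^NG\|_N^2\le C_0\|G\|_N^2$.

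\textbf{Martingale term.} By \cref{prop:estimate_quadratic_variation}, this term is at most $C_1\int_0^t\mathscr{E}_N^\theta(P_{t-s}^NG)\,\diff s$. The energy identity proved after \eqref{eq:contraction_property_semigroup} shows that this integral is at most $\tfrac12\|G\|_N^2$, uniformly in $t$. Hence $C_2=C_0+C_1/2$ works.
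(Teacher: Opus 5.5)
Your proof is correct and follows the paper's argument essentially step for step: Dynkin's formula with the time-dependent test function $P_{t-s}^NG$, orthogonality of the $\mathcal{F}_0$-measurable term and the martingale, then \eqref{eq:assumption_variance_estimate} with the $\ell^2$-contraction for the first term and \cref{prop:estimate_quadratic_variation} with the energy identity for the second, giving $C_2=C_0+C_1/2$. Your extra care in applying \eqref{eq:assumption_variance_estimate} to the grid vector $P_t^NG\notin\mathcal{S}_\theta$ is sound, since any grid vector is the restriction of a function in $C_c^\infty((0,1);\R^2)\subset\mathcal{S}_\theta$ for every $\theta$; this makes explicit a point the paper handles only by identifying test functions with their values on $\frac1N\Lambda_N$.
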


\begin{proof}
    Fix $t\ge 0$ and $G\in\mathcal{S}_\theta$. Again, throughout, we use the abuse of notation of identifying a function $G$ with the vector $\big( G(\frac xN), x\in\Lambda_N\big)$ as the fluctuation field actually looks only at the values of $G$ on the discrete set $\frac1N\Lambda_N$. Then, using the time-dependent test function $P_{t-s}^NG$ in Dynkin's formula, we obtain that
    \begin{equation}\label{eq:decomposition_fluctuation_field_dynamics}
        \mathcal{Y}_t^N(G) = \mathcal{Y}_0^N(P_t^NG) + \int_0^t \diff M_s^N(P_{t-s}^NG)
    \end{equation}
    as $P_{t-s}^NG$ solves the backwards equation $\partial_s P_{t-s}^NG + A_N^\theta P_{t-s}^NG =0$. Both terms are orthogonal as the first one is measurable with respect to the initial configuration $\eta (0)$, while the second one is a martingale starting at $0$. Therefore,
    \begin{equation*}
        \E_{\mu^N}\left[ \big| \mathcal{Y}_t^N(G)\big|^2\right] = \E_{\mu^N}\left[ \big| \mathcal{Y}_0^N(P_t^NG)\big|^2\right] + \E_{\mu^N}\left[ \left( \int_0^t \diff M_s^N(P_{t-s}^NG)\right)^2\right].
    \end{equation*}
    By assumption \ref{eq:assumption_variance_estimate}, the first term on the right-hand side is bounded by $C_0\| P_t^NG\|_N^2$, which is in turn bounded by $C_0\| G\|_{N}^2$ thanks to the contraction property \eqref{eq:contraction_property_semigroup}. The remaining expectation can be rewritten as
    \begin{equation*}
        \E_{\mu^N}\left[ \left( \int_0^t \diff M_s^N(P_{t-s}^NG)\right)^2\right] = \E_{\mu^N}\left[ \int_0^t\Gamma^N(\eta (s),P_{t-s}^NG)\diff s\right] \le C_1\int_0^t \mathscr{E}_N^\theta (P_{t-s}^NG)\diff s
    \end{equation*}
    according to \cref{prop:estimate_quadratic_variation}. But notice that
    \begin{equation*}
        \mathscr{E}_N^\theta (P_{t-s}^NG) = - \langle P_{t-s}^NG,A_N^\theta P_{t-s}^NG\rangle_N = \frac12 \frac{\diff}{\diff s} \| P_{t-s}^NG\|_N^2
    \end{equation*}
    so the integral can be computed explicitly as
    \begin{equation*}
        \frac{C_1}{2} \left( \| G\|_N^2 - \| P_t^NG\|_N^2\right) \le \frac{C_1}{2} \| G\|_N^2.
    \end{equation*}
    We proved that for any $t\ge 0$ and any $G\in\mathcal{S}_\theta$, we have
    \begin{equation*}
        \E_{\mu^N}\left[ \big| \mathcal{Y}_t^N(G)\big|^2\right] \le \left( C_0+\frac{C_1}{2}\right)\| G\|_N^2,
    \end{equation*}
    which concludes the proof.
\end{proof}

\medskip

We are now ready to start the proof of tightness of the sequence of fluctuation fields $(\mathcal{Y}^N)$. First, as the space $\mathcal{S}_\theta$ is a nuclear Fréchet space, we can rely on Mitoma's criterion which states that, in the Skorokhod topology, proving tightness for the sequence of $\mathcal{S}_\theta'$-valued processes $(\mathcal{Y}^N)$ is equivalent to proving tightness for the sequence of real-valued processes $(\mathcal{Y}^N(G))$ for any~$G\in\mathcal{S}_\theta$. Then, according to the decomposition \eqref{def:dynkin_martingale}, it is enough to prove tightness of the processes
\begin{equation*}
    \big(\mathcal{Y}_0^N(G)\big)_{t\ge 0} ,\qquad \big( M_t^N(G)\big)_{t\ge 0} \qquad\mbox{ and }\qquad \left( \int_0^t \mathcal{Y}_s^N(A_N^\theta G)\diff s\right)_{t\ge 0}
\end{equation*}
for any $G\in\mathcal{S}_\theta$. This is what we do in the remaining of the section.

\subsection{Initial field}

Thanks to the variance estimate \ref{eq:assumption_variance_estimate} and since $G$ is smooth, the sequence of real-valued random variables $(\mathcal{Y}_0^N(G))_{N\ge 1}$ is bounded in $L^2$ by $C_0\| G\|_\infty^2$. This directly implies that the sequence of constant processes $(\mathcal{Y}_0^N(G))_{t\ge 0}$ is tight.

\subsection{Integral term}

In order to prove tightness of the integral term, we rely on the following result, known as \emph{Kolmogorov-Chentsov criterion}.

\begin{theorem}[Kolmogorov-Chentsov criterion, \cite{chentsov_weak_1956}]
    A sequence of real-valued, continuous stochastic processes $(X_t^N, t\in [0,T])_{N\ge 1}$ is tight with respect to the uniform topology of $C([0,T],\R )$ if the sequence of real-valued random variables $(X_0^N)_{N\ge 1}$ is tight, and there exist positive constants~$K,\gamma_1,\gamma_2$ such that for any $0\le s\le t\le T$, and any $N\ge 1$, we have
    \begin{equation*}
        \E\left[ |X_t^N - X_s^N|^{\gamma_1}\right] \le K (t-s)^{1+\gamma_2}.
    \end{equation*}
\end{theorem}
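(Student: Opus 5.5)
The plan is to use the classical characterization of tightness in $C([0,T],\R)$ given by the Arzelà--Ascoli theorem (see e.g.\ Billingsley's book). A sequence of continuous processes $(X^N)_{N\ge 1}$ is tight for the uniform topology if and only if two conditions hold. First, the initial values $(X_0^N)_{N\ge1}$ must be tight, which is part of our hypotheses. Second, for every $\varepsilon>0$ the modulus of continuity $w(X^N,\delta)\coloneq\sup_{|t-s|\le\delta}|X_t^N-X_s^N|$ must satisfy
\begin{equation*}
    \lim_{\delta\to 0}\limsup_{N\to\infty}\p\big(w(X^N,\delta)>\varepsilon\big)=0 .
\end{equation*}
So everything reduces to controlling $w(X^N,\delta)$ uniformly in $N$ using only the moment bound $\E[|X_t^N-X_s^N|^{\gamma_1}]\le K(t-s)^{1+\gamma_2}$. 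The constants $K,\gamma_1,\gamma_2$ do not depend on $N$, so any estimate derived from this bound alone is automatically uniform in $N$.

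The key step is the dyadic chaining argument behind Kolmogorov's continuity theorem. Fix $\alpha\in(0,\gamma_2/\gamma_1)$ and let $D_n=\{kT2^{-n}:0\le k\le 2^n\}$. Consider the event
\begin{equation*}
    A_n^N=\Big\{\max_{0\le k<2^n}\big|X^N_{(k+1)T2^{-n}}-X^N_{kT2^{-n}}\big|>2^{-n\alpha}\Big\}.
\end{equation*}
A union bound and Markov's inequality of order $\gamma_1$ give
\begin{equation*}
    \p(A_n^N)\le 2^n K(T2^{-n})^{1+\gamma_2}2^{n\alpha\gamma_1}=KT^{1+\gamma_2}\,2^{-n(\gamma_2-\alpha\gamma_1)},
\end{equation*}
which is summable in $n$ uniformly in $N$. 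Hence $\p\big(\bigcup_{n\ge m}A_n^N\big)\le C2^{-m(\gamma_2-\alpha\gamma_1)}$, with $C$ independent of $N$.

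On the complement of $\bigcup_{n\ge m}A_n^N$, take dyadic points $s<t$ in $\bigcup_n D_n$ with $|t-s|\le T2^{-m}$. Write the standard telescoping decomposition of $s$ and $t$ along successive dyadic levels, which uses at most one, resp.\ two, increments per level $n\ge m$. This yields
\begin{equation*}
    |X_t^N-X_s^N|\le 2\sum_{n\ge m}2^{-n\alpha}\le C'2^{-m\alpha}.
\end{equation*}
Since the trajectories are continuous and the dyadics are dense, the same bound holds for all $s,t$ with $|t-s|\le T2^{-m}$. Therefore $w(X^N,T2^{-m})\le C'2^{-m\alpha}$ outside an event of probability at most $C2^{-m(\gamma_2-\alpha\gamma_1)}$.

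Given $\varepsilon>0$, choose $m$ large enough that $C'2^{-m\alpha}<\varepsilon$. Then for every $\delta\le T2^{-m}$ we get $\sup_N\p(w(X^N,\delta)>\varepsilon)\le C2^{-m(\gamma_2-\alpha\gamma_1)}$. The right-hand side tends to $0$ as $m\to\infty$, which is exactly the modulus condition above. Together with the tightness of $(X_0^N)_{N\ge1}$, this gives tightness.

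I expect the main obstacle to be the bookkeeping in the chaining step. One must check that any two dyadic times at distance at most $T2^{-m}$ are linked by a chain using at most a bounded number of increments of each level $n\ge m$ (two per level suffices, possibly passing through one neighbouring point at level $m$). This is routine but must be done carefully. Everything else is Markov's inequality and summation of a geometric series.
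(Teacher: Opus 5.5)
Your argument is correct, and it is not a reconstruction of anything in the paper: the paper gives no proof of this statement. It quotes it from \cite{chentsov_weak_1956} and applies it only to the time-integral terms $\int_0^t \mathcal{Y}_r^N(\cdot)\,\diff r$, which have continuous paths and vanish at $t=0$; the c\`adl\`ag martingale part is handled by a different theorem.

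You supply the standard self-contained proof: the Arzel\`a--Ascoli characterization of tightness in $C([0,T],\R)$ combined with Kolmogorov's dyadic chaining. Your bound $\p(A_n^N)\le KT^{1+\gamma_2}2^{-n(\gamma_2-\alpha\gamma_1)}$ is right, and the bookkeeping you flag closes cleanly:
\begin{itemize}
\item For dyadic $s<t$ with $t-s\le T2^{-m}$, let $s_n$ be the smallest point of $D_n$ that is $\ge s$, and $t_n$ the largest point of $D_n$ that is $\le t$.
\item Then $s_n-s_{n+1}$ and $t_{n+1}-t_n$ lie in $\{0,T2^{-n-1}\}$.
\item Also $t_m-s_m\in\{-T2^{-m},0,T2^{-m}\}$. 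The negative value occurs when $s$ and $t$ lie strictly inside the same level-$m$ cell, and it still costs only one level-$m$ increment.
\item Telescoping therefore gives $|X_t^N-X_s^N|\le 2^{-m\alpha}+2\sum_{n>m}2^{-n\alpha}$ off $\bigcup_{n\ge m}A_n^N$.
\item For arbitrary $s<t$ with $t-s\le T2^{-m}$, the same inner approximations $s_n\downarrow s$, $t_n\uparrow t$ keep the gap at most $t-s$. Path continuity then lets you pass to the limit, and this is where continuity is genuinely used.
\end{itemize}

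Compared with the bare citation, or with the other classical route through a maximal inequality for increments along a grid, your chaining argument makes explicit that every bound depends only on $(K,\gamma_1,\gamma_2,T)$. You therefore actually obtain $\sup_N\p(w(X^N,\delta)>\varepsilon)\to 0$ together with a H\"older-type modulus estimate. This is stronger than the $\limsup_N$ condition the criterion needs.
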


We start by proving a decomposition of the operator $A_N^\theta$, and then apply the Kolmogorov-Chentsov criterion to each part of the decomposition. Again, we identify a function with the vector of its values on the discrete set $\frac1N\Lambda_N$. In the bulk, the operator $A_N^\theta$ acts as a discrete Laplacian, so we set
\begin{equation*}
    (A_N^\theta G)\big( \tfrac xN\big) = DG''\big(\tfrac xN\big) + R_N\big( \tfrac xN\big) 
\end{equation*}
for any $x\in \{2,\hdots ,N-2\}$, where
\begin{equation*}
    R_N\big( \tfrac xN\big) = D\left( N^2\big( G(\tfrac{x+1}{N})-2G(\tfrac xN)+G(\tfrac{x-1}{N})\big) - G''\big(\tfrac xN\big)\right)
\end{equation*}
At the boundaries, set $R_N(\frac 1N) = R_N(\frac{N-1}{N})=0$. At the left boundary, we can write
\begin{equation*}
    (A_N^\theta G)\big( \tfrac 1N\big) = DN^2 \big( G(\tfrac2N)-G(\tfrac1N)\big) - DN^{2-\theta} G(\tfrac 1N) = DG''(\tfrac1N) + \mathbf{b}_\ell^N(G)
\end{equation*}
where we set
\begin{equation*}
    \mathbf{b}_\ell^N(G) = D\left( N^2\big( G(\tfrac2N) - (1+N^{-\theta})G(\tfrac1N)\big) - G''(\tfrac1N)\right).
\end{equation*}
At the right boundary, we write $(A_N^\theta G)(\tfrac{N-1}{N}) = DG''(\tfrac{N-1}{N})+\mathbf{b}_r^N(G)$ where $\mathbf{b}_r^N(G)$ is defined similarly. Therefore, we have the decomposition
\begin{equation*}
    A_N^\theta G = DG'' + R_N + \mathbf{b}_\ell^N(G)\delta_1 + \mathbf{b}_r^N(G)\delta_{N-1}
\end{equation*}
where $\delta_1(\frac{\cdot}{N})$ and $\delta_{N-1}(\frac{\cdot}{N})$ are respectively the functions $\Lambda_N\longrightarrow\R$ defined by $\ind_{x=1}$ and $\ind_{x=N-1}$. Below, we show that the integral term associated to each part of this decomposition satisfies the hypotheses of the Kolmogorov-Chentsov criterion.

\subsubsection{Bulk part}

Applying Cauchy-Schwarz inequality to the time integral, the variance estimate in \cref{prop:variance_estimate_dynamics}, we have that for any $0\le s\le t\le T$,
\begin{align*}
    \E_{\mu^N}\left[ \left( \int_s^t \mathcal{Y}_r^N(DG'')\diff r\right)^2\right]  \le D^2(t-s)\int_s^t\E_{\mu^N}\left[ \big| \mathcal{Y}_r^N(G'')\big|^2\right]\diff r
    & \le D^2C_2(t-s)^2\| G''\|_N^2\\
    & \le D^2C_2\| G''\|_\infty^2 (t-s)^2.
\end{align*}
Therefore, the integral term associated to the bulk part satisfies the hypotheses of the Kolmogorov-Chentsov criterion, and is thus tight.

\subsubsection{Discretization error term}

By smoothness of the function $G$, a Taylor expansion yields a constant $C>0$, such that for any $x\in\{2,\hdots ,N-2\}$, it holds that
\begin{equation*}
    \big| R_N\big( \tfrac xN\big)\big| \le \frac{C}{N^2}
\end{equation*}
In particular, this implies that
\begin{equation*}
    \|R_N\|_N^2 \le \frac{C^2}{N^4}.
\end{equation*}
Using this, together with Cauchy-Schwarz inequality and \cref{prop:variance_estimate_dynamics}, we have that for any~$0\le s\le t\le T$,
\begin{align*}
    \E_{\mu^N}\left[ \left( \int_s^t \mathcal{Y}_r^N(R_N)\diff r\right)^2\right]  \le (t-s)\int_s^t\E_{\mu^N}\left[ \big| \mathcal{Y}_r^N(R_N)\big|^2\right]\diff r
    & \le C_2(t-s)^2\| R_N\|_N^2\\
    & \le \frac{C_2C^2}{N^4}(t-s)^2.
\end{align*}
Therefore, the integral term associated to the discretization error part satisfies the hypotheses of the Kolmogorov-Chentsov criterion, and is thus tight. Actually, the integral term even vanishes in $L^2$ as $N$ goes to infinity.

\subsubsection{Boundary part}

We now get the more delicate part of the proof, which is to prove tightness of the integral term associated to the boundary part. We only treat the left boundary, as the right boundary is similar. We prove the following.

\begin{lemma}\label{lemma:boundary_part_tightness}
    For any $\gamma\in (0,1)$, there exists a constant $C_\gamma>0$ such that for any $0\le s\le t\le T$, and any deterministic vector $\mathbf{u}\in\R^2$, we have
    \begin{equation}
        \E_{\mu^N}\left[ \left(\int_s^t \mathbf{u}^\dagger \bar{\mathbf{Q}}_1(r)\diff r\right)^2\right] \le C_\gamma |\mathbf{u}|^2 (t-s)^{1+\gamma}N^{2\gamma +\theta -2}.
    \end{equation}
    This estimate remains in force with $\bar{\mathbf{Q}}_1(r)$ replaced by $\bar{\mathbf{Q}}_{N-1}(r)$.
\end{lemma}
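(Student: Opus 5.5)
The plan is to prove a bound of the form $C|\mathbf{u}|^2(t-s)^{1+\gamma}N^{2\gamma+\theta-2}$ by combining the dynamics near the boundary with a discrete Poisson (resolvent) equation for the killed random walk generated by $A_N^\theta$. By linearity it is enough to treat a fixed $\mathbf{u}$ acting on both components at once. The key observation is that $\frac{1}{\sqrt N}\mathbf{u}^\dagger\bar{\mathbf{Q}}_1(r)=\mathcal{Y}^N_r(\mathbf{u}\delta_1)$, with $\delta_1=\ind_{x=1}$ as in the decomposition of $A_N^\theta$ above. So the quantity to estimate is $N\,\E_{\mu^N}\big[(\int_s^t\mathcal{Y}^N_r(\mathbf{u}\delta_1)\diff r)^2\big]$, which is a time integral of the field against a very singular test function. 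The plan is to trade this singular function for a smoother one, $g$, through Dynkin's formula \eqref{def:dynkin_martingale}.

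\emph{Step 1 (resolvent decomposition).} Fix $\lambda=(t-s)^{-1}$ and let $g=g_\lambda$ be the solution of $(\lambda-A_N^\theta)g=\delta_1$. This solution exists and is positive, since $-A_N^\theta$ is positive definite and generates a sub-Markovian walk. Applying \eqref{def:dynkin_martingale} to $\mathbf{u}g$ between $s$ and $t$ gives
\begin{equation*}
\int_s^t\mathcal{Y}^N_r(\mathbf{u}\delta_1)\diff r=\lambda\int_s^t\mathcal{Y}^N_r(\mathbf{u}g)\diff r-\big(\mathcal{Y}^N_t(\mathbf{u}g)-\mathcal{Y}^N_s(\mathbf{u}g)\big)+\big(M^N_t(\mathbf{u}g)-M^N_s(\mathbf{u}g)\big).
\end{equation*}
I then bound each term in $L^2$. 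For the first two terms I use Cauchy--Schwarz in time together with \cref{prop:variance_estimate_dynamics}. For the martingale term I use \cref{prop:estimate_quadratic_variation}. The result is
\begin{equation*}
\E\Big[\Big(\int_s^t\mathcal{Y}^N_r(\mathbf{u}\delta_1)\diff r\Big)^2\Big]\le C|\mathbf{u}|^2\Big(\|g\|_N^2+(t-s)\,\mathscr{E}_N^\theta(g)\Big),
\end{equation*}
where the factor $\lambda^2(t-s)^2=1$ has absorbed the first term into $\|g\|_N^2$. Next I use the identity $\lambda\|g\|_N^2+\mathscr{E}_N^\theta(g)=\langle g,\delta_1\rangle_N=g(1)/N$. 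It turns the right-hand side into at most $C|\mathbf{u}|^2(t-s)\,g(1)/N$. After multiplying by $N$, the goal therefore reduces to the Green's function estimate
\begin{equation*}
g_\lambda(1)\le C_\gamma\,\lambda^{-\gamma}N^{2\gamma+\theta-2}.
\end{equation*}

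\emph{Step 2 (estimate of $g_\lambda(1)$, the main obstacle).} Note that $g_\lambda(1)=N\int_0^\infty e^{-\lambda\tau}(P^N_\tau)_{1,1}\diff\tau$. Here $(P^N_\tau)_{1,1}$ is the probability that the walk started at $1$, jumping at rate $DN^2$ and killed at rate $DN^{2-\theta}$ at each boundary site, is at $1$ at time $\tau$. The $\gamma$-interpolation comes from two competing bounds on this integral. First, the return probability is bounded by $1$, and a walk on a segment at diffusive speed $N^2$ is at $1$ with probability of order $1/(N\sqrt{\tau})$ for times $\tau\ge N^{-2}$. Second, the killing forces decay. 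Near the boundary the relevant killing, per unit time spent at site $1$, is $N^{2-\theta}$. Integrating against $e^{-\lambda\tau}$, the short-time region $\tau\le N^{-2}$ contributes at most $N\cdot N^{-2}$. The remaining region contributes at most $\min\big(\lambda^{-1/2},\,N^{\theta-1}\big)$ up to constants. The second bound comes from the fact that the expected total local time at $1$ before killing is $\lesssim N^{\theta-2}$ when the local time is measured in units of rate $N^2$. Taking the minimum and interpolating, with $\min(a,b)\le a^{\gamma}b^{1-\gamma}$, yields the claimed $\lambda^{-\gamma}N^{2\gamma+\theta-2}$ after possibly reducing to $\gamma<1$.

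I expect the real work to lie in this step. The goal is a clean, $N$-uniform bound on $g_\lambda(1)$. I would try first the spectral decomposition of $A_N^\theta$, whose eigenvectors are explicit sines and cosines with a boundary phase shift, writing $g_\lambda(1)=\sum_k\phi_k(1)^2/(\lambda+\mu_k)$. One then bounds $\phi_k(1)^2\lesssim\min(1,(kN^{-1}+N^{-\theta})^{2})/N$ and splits the sum at $k\approx(\lambda)^{1/2}$. As a fallback, the comparison with a half-line walk with killing at the origin also works. Note also that the trivial bound $(t-s)^2$, coming from boundedness of $\mathbf{Q}_1$, would by itself suffice whenever $t-s\le N^{\theta-2}$.

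The same argument, with $\delta_{N-1}$ replacing $\delta_1$, handles $\bar{\mathbf{Q}}_{N-1}$ by the reflection symmetry $x\mapsto N-x$ of $A_N^\theta$.
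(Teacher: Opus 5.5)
Your Step 1 is correct, and it is a genuinely different route from the paper's. You take the resolvent $g=(\lambda-A_N^\theta)^{-1}\delta_1$ with $\lambda=(t-s)^{-1}$, apply Dynkin's formula to $\mathbf{u}g$, and bound the three pieces with \cref{prop:variance_estimate_dynamics} and \cref{prop:estimate_quadratic_variation}. The identity $\lambda\|g\|_N^2+\mathscr{E}_N^\theta(g)=g(1)/N$ then gives the clean reduction
\begin{equation*}
\E_{\mu^N}\Big[\Big(\int_s^t\mathbf{u}^\dagger\bar{\mathbf{Q}}_1(r)\,\diff r\Big)^2\Big]\le C|\mathbf{u}|^2(t-s)\,g_\lambda(1).
\end{equation*}
The paper instead uses the finite-horizon test function $K_{t-s}^NG=(B_N^\theta)^{-1}(I-P_{t-s}^N)G$, an orthogonal decomposition, and spectral calculus to reach $(t-s)^{1+\gamma}\langle G,(B_N^\theta)^{\gamma-1}G\rangle_N$. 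It then bounds $\langle\delta_1,(B_N^\theta)^{\gamma-1}\delta_1\rangle_N$ by $(\lambda_{N-1}^N)^\gamma\langle\delta_1,(B_N^\theta)^{-1}\delta_1\rangle_N$, using the variational formula with $\mathscr{E}_N^\theta(g)\ge DN^{1-\theta}g(1)^2$, together with $\lambda_{N-1}^N\le 6DN^2$.

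The gap is Step 2, which you leave open, and whose heuristic does not work as written. Since $g_\lambda=\int_0^\infty e^{-\lambda\tau}P_\tau^N\delta_1\,\diff\tau$, one has $g_\lambda(1)=\int_0^\infty e^{-\lambda\tau}(P^N_\tau)_{1,1}\,\diff\tau$, without the prefactor $N$ you wrote.

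\begin{itemize}
\item With that spurious factor, your short-time contribution $N\cdot N^{-2}=N^{-1}$ already exceeds the target $\lambda^{-\gamma}N^{2\gamma+\theta-2}$, for instance when $\theta=0$, $\gamma<1/2$ and $\lambda\ge T^{-1}$.
\item Your interpolation $\min(a,b)\le a^\gamma b^{1-\gamma}$ with $a=\lambda^{-1/2}$ and $b=N^{\theta-1}$ gives $\lambda^{-\gamma/2}N^{(\theta-1)(1-\gamma)}$, which is not the claimed bound.
\item The spectral fallback would require controlling eigenvectors whose boundary phase is only implicitly determined, and it is not needed.
\end{itemize}

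The missing step is in fact two lines from the identity you already have. Since $g(1)/N=\lambda\|g\|_N^2+\mathscr{E}_N^\theta(g)\ge 0$:
\begin{itemize}
\item the bound $\|g\|_N^2\ge g(1)^2/N$ gives $g_\lambda(1)\le\lambda^{-1}$;
\item the bound $\mathscr{E}_N^\theta(g)\ge DN^{1-\theta}g(1)^2$ gives $g_\lambda(1)\le N^{\theta-2}/D$.
\end{itemize}
Hence the expectation is at most
\begin{equation*}
C|\mathbf{u}|^2(t-s)\min\big(t-s,\,N^{\theta-2}/D\big)\le CD^{\gamma-1}|\mathbf{u}|^2(t-s)^{1+\gamma}N^{(\theta-2)(1-\gamma)},
\end{equation*}
and $(\theta-2)(1-\gamma)=2\gamma+\theta-2-\gamma\theta\le 2\gamma+\theta-2$.

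With this replacement your argument is complete and avoids spectral calculus entirely. It even improves the paper's bound by a factor $N^{-\gamma\theta}$, which the paper loses through $(\lambda_{N-1}^N)^\gamma$; this gain is immaterial for the tightness application. The right boundary follows by the reflection $x\mapsto N-x$, as you say.
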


\begin{proof}
    Recall the definition \eqref{eq:matrix_form_A_N} of $A_N^\theta$, and let $B_N^\theta = -A_N^\theta$. The matrix $B_N^\theta$ is positive definite, thus invertible and we can define its powers $(B_N^\theta)^\gamma$ for any $\gamma\in\R$ thanks to its spectral decomposition. Recall also that $P_t^N = e^{tA_N^\theta} = e^{-tB_N^\theta}$ is the semigroup associated to $A_N^\theta$. Fix a deterministic function $G :[0,1]\longrightarrow\R^2$, and for any $0\le s\le t\le T$, set
    \begin{equation*}
        I_{s,t}^N(G) = \int_s^t \mathcal{Y}_r^N(G)\diff r,\qquad K_t^NG = \int_0^t P_r^NG\,\diff r.
    \end{equation*}
    As the matrix $B_N^\theta$ is invertible, and by definition of the semigroup, one can easily check that~$K_t^N = (B_N^\theta)^{-1}(I-P_t^N)$ where $I$ is the identity matrix. Fix $t\ge 0$ and consider the time-dependent test function $H_s=K_{t-s}^NG$ for $s\in [0,t]$. Then $H_t=0$ and we have
    \begin{equation*}
        \partial_sH_s = -P_{t-s}^NG,\qquad A_N^\theta H_s = P_{t-s}^NG-G\quad\mbox{ so }\quad \partial_sH_s+A_N^\theta H_s=-G.
    \end{equation*}
    As a consequence, Dynkin's formula, which writes under the form
    \begin{equation*}
        \mathcal{Y}_t^N(H_t)-\mathcal{Y}_s^N(H_s) - \int_s^t \mathcal{Y}_r^N(\partial_rH_r + A_N^\theta H_r)\diff r = \int_s^t \diff M_r^N(H_r),
    \end{equation*}
    reduces to
    \begin{equation*}
        I_{s,t}^N(G) = \mathcal{Y}_s^N(K_{t-s}^NG) + \int_s^t \diff M_r^N(K_{t-r}^NG).
    \end{equation*}
    The first term on the right-hand side is measurable with respect to the filtration at time $s$, and the second term is a future martingale increment, so they are orthogonal in $L^2$. Therefore, we have
    \begin{equation}\label{eq:lemma_decomposition_variance}
        \E_{\mu^N}\left[ \big| I_{s,t}^N(G)\big|^2\right] = \E_{\mu^N}\left[ \big| \mathcal{Y}_s^N(K_{t-s}^NG)\big|^2\right] + \E_{\mu^N}\left[ \left( \int_s^t \diff M_r^N(K_{t-r}^NG)\right)^2\right].
    \end{equation}
    We now estimate each term on the right-hand side separately. By \cref{prop:variance_estimate_dynamics}, the first term on the right-hand side is bounded by $C_2\| K_{t-s}^NG\|_N^2$. In order to estimate this discrete norm, we introduce a basis $(\varphi_k^N)_{1\le k\le N-1}$ of eigenvectors of the matrix $B_N^\theta$, with corresponding positive eigenvalues~$(\lambda_k^N)_{1\le k\le N-1}$ sorted in increasing order. Then, if we decompose the vector $G$ in this basis as 
    \begin{equation*}
        G = \sum_{k=1}^{N-1} g_k \varphi_k^N,
    \end{equation*}
    then the operator $K_t^N$ acts on $G$ as
    \begin{equation*}
        K_t^NG = \sum_{k=1}^{N-1}  \frac{1-e^{-\lambda_k^Nt}}{\lambda_k^N}g_k\varphi_k^N
    \end{equation*}
    which yields 
    \begin{equation*}
        \| K_{t-s}^NG\|_N^2 = \sum_{k=1}^{N-1} \left( \frac{1-e^{-(t-s)\lambda_k^N}}{\lambda_k^N}\right)^2 g_k^2.
    \end{equation*}
    Notice that $(1-e^{-x})^2\le x^{1+\gamma}$ for any $x\ge 0$ and any $\gamma\in (0,1)$, so we have
    \begin{equation*}
        \| K_{t-s}^NG\|_N^2 \le (t-s)^{1+\gamma} \big\langle G,(B_N^\theta)^{\gamma -1}G\big\rangle_N.
    \end{equation*}
    On the other hand, the second term on the right-hand side of \eqref{eq:lemma_decomposition_variance} can be estimated using \cref{prop:estimate_quadratic_variation} as
    \begin{equation*}
        \E_{\mu^N}\left[ \left( \int_s^t \diff M_r^N(K_{t-r}^NG)\right)^2\right] = \E_{\mu^N}\left[ \int_s^t \Gamma^N(\eta (r),K_{t-r}^NG)\diff r\right] \le C_1\int_s^t \mathscr{E}_N^\theta (K_{t-r}^NG)\diff r.
    \end{equation*}
    Set $h=t-s$. By a change of variable, and using again the decomposition in the eigenbasis of~$B_N^\theta$, we have that the integral on the right-hand side is equal to
    \begin{equation*}
        \int_0^h \mathscr{E}_N^\theta (K_r^NG)\diff r = \sum_{k=1}^{N-1} \int_0^h \frac{(1-e^{-r\lambda_k^N})^2}{\lambda_k^N} g_k^2\diff r = \sum_{k=1}^{N-1} \frac{g_k^2}{(\lambda_k^N)^2}\int_0^{h\lambda_k^N} (1-e^{-x})^2\diff x.
    \end{equation*}
    Notice that the quantity $\int_0^y (1-e^{-x})^2\diff x$ is at most of order $y^3$ for $y\le 1$ and at most of order $y$ for $y\ge 1$, so it is of order $y^{1+\gamma}$ for any $\gamma\in (0,1)$. As a consequence, the second term in the right-hand side of \eqref{eq:lemma_decomposition_variance} can also be bounded above by a constant times $(t-s)^{1+\gamma}\langle G,(B_N^\theta)^{\gamma -1}G\rangle_N$. More broadly, we proved that for any $\gamma\in (0,1)$, there exists a constant $C_\gamma>0$ such that for any $0\le s\le t\le T$, and any deterministic function~$G:[0,1]\longrightarrow\R^2$, we have
    \begin{equation*}
        \E_{\mu^N}\left[ \left(\int_s^t\mathcal{Y}_r^N(G)\diff r\right)^2\right] \le C_\gamma (t-s)^{1+\gamma}\big\langle G,(B_N^\theta)^{\gamma -1}G\big\rangle_N.
    \end{equation*}
    In order to obtain \cref{lemma:boundary_part_tightness}, we need to specify the choice of test function. Namely, we take~$G=\mathbf{u}\delta_1$ where $\mathbf{u}\in\R^2$ is a deterministic vector, and we notice that $\mathcal{Y}_r^N(\mathbf{u}\delta_1) = \frac{1}{\sqrt{N}}\mathbf{u}^\dagger \bar{\mathbf{Q}}_1(r)$. Then, we have
    \begin{equation*}
        \E_{\mu^N}\left[ \left(\int_s^t \mathbf{u}^\dagger \bar{\mathbf{Q}}_1(r)\diff r\right)^2\right] \le C_\gamma (t-s)^{1+\gamma}N|\mathbf{u}|^2\big\langle \delta_1,(B_N^\theta)^{\gamma -1}\delta_1\big\rangle_N 
    \end{equation*}
    and we are left to estimate $\langle\delta_1, (B_N^\theta)^{\gamma -1}\delta_1\rangle_N$. Recall that the eigenvalues are sorted in increasing order, so using the spectral decomposition, for any function $f:\Lambda_N\longrightarrow\R$, we have
    \begin{equation*}
        \big\langle f,(B_N^\theta)^{\gamma -1}f\big\rangle_N \le (\lambda_{N-1}^N)^\gamma \big\langle f,(B_N^\theta )^{-1}f\big\rangle_N.
    \end{equation*}
    Moreover, we have the variational formula
    \begin{equation*}
        \big\langle f,(B_N^\theta)^{-1}f\big\rangle_N = \sup_g \big\{ 2\langle f,g\rangle_N - \mathscr{E}_N^\theta (g)\big\}.
    \end{equation*}
    Plugging $f=\delta_1$ inside this formula, and using the fact that $\mathscr{E}_N^\theta (g) \ge DN^{1-\theta}g(1)^2$, we obtain that
    \begin{equation*}
        \big\langle\delta_1 , (B_N^\theta)^{-1}\delta_1\big\rangle_N \le \sup_{a\in\R} \left\{\frac{2a}{N} - DN^{1-\theta}a^2\right\} = \frac{N^{\theta -3}}{D}.
    \end{equation*}
    Moreover, notice that for any $f:\Lambda_N\longrightarrow\R$, we have
    \begin{equation*}
        \mathscr{E}_N^\theta (f) = DN\sum_{x=1}^{N-2}\big( f(x+1)-f(x)\big)^2 + DN^{1-\theta} \big( f(1)^2+f(N-1)^2\big) \le (4DN^2+2DN^{2-\theta})\| f\|_N^2
    \end{equation*}
    using the inequality $(a-b)^2\le 2(a^2+b^2)$. As $\theta\ge 0$, plugging $f=\varphi_{N-1}^N$ inside this inequality, we obtain that $\lambda_{N-1}^N \le 6DN^2$. Therefore, we have proved that
    \begin{equation*}
        \big\langle\delta_1 , (B_N^\theta)^{\gamma -1}\delta_1\big\rangle_N \le (6DN^2)^\gamma \frac{N^{\theta -3}}{D} = \frac{(6D)^\gamma}{D} N^{2\gamma +\theta -3}
    \end{equation*}
    and the result follows.
\end{proof}

We need to apply this lemma when the vector $\mathbf{u}$ is equal to $\mathbf{b}_\ell^N(G)$ or $\mathbf{b}_r^N(G)$, which depends on $N$. Therefore, we need to have an estimate on the growth of the boundary terms $\mathbf{b}_\ell^N(G)$ and~$\mathbf{b}_r^N(G)$. This is the content of the following lemma.

\begin{lemma}\label{lemma:boundary_terms_estimate}
    For any $G\in\mathcal{S}_\theta$, there exists a constant $C_G>0$ such that
    \begin{equation*}
        |\mathbf{b}_\ell^N(G)|+|\mathbf{b}_r^N(G)| \le \begin{cases}
            C_GN & \mbox{ if } 0\le \theta <1,\\
            C_G & \mbox{ if }\theta =1,\\
            C_G(1+N^{2-\theta}) & \mbox{ if }\theta >1.
        \end{cases}
    \end{equation*}
\end{lemma}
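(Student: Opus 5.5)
The plan is to Taylor expand $G$ around $u=0$ and read off the orders in $N$ from the boundary conditions of $\mathcal{S}_\theta$. We treat only $\mathbf{b}_\ell^N(G)$; the right boundary is symmetric, with expansions around $u=1$ and the sign change in the Robin condition. Since $G$ is smooth on $[0,1]$, we write
\begin{equation*}
    G(\tfrac kN) = G(0) + \tfrac kN G'(0) + \tfrac{k^2}{2N^2}G''(0) + O(N^{-3}),\qquad G''(\tfrac1N) = G''(0)+O(N^{-1}),
\end{equation*}
for $k\in\{1,2\}$, where the $O(\cdot)$ constants depend only on $\tnorm{G}_3$. Plugging these into the definition of $\mathbf{b}_\ell^N(G)$ gives
\begin{equation*}
    \mathbf{b}_\ell^N(G) = D\Big( -N^{2-\theta}G(0) + N\big(G'(0) - N^{-\theta}G'(0)\big) + \tfrac32 G''(0) - \tfrac{N^{-\theta}}{2}G''(0) - G''(0)\Big) + O(N^{-1}) + O(N^{-1-\theta}).
\end{equation*}
This is more conveniently grouped as
\begin{equation*}
    \mathbf{b}_\ell^N(G) = D\Big( N\big(G'(0)-N^{1-\theta}G(0)\big) - N^{1-\theta}G'(0) + \tfrac12 G''(0) - \tfrac{N^{-\theta}}{2}G''(0)\Big) + O(1).
\end{equation*}

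Next we use the boundary conditions of $\mathcal{S}_\theta$ case by case, only with $k=0$.

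\begin{itemize}[leftmargin=*]
    \item \emph{Dirichlet case, $\theta<1$.} We have $G(0)=0$, so the first bracket reduces to $NG'(0)$. The term $N^{1-\theta}G'(0)$ is $O(N)$ and the remaining terms are $O(1)$, so $|\mathbf{b}_\ell^N(G)|\le C_GN$. No cancellation is available here, since $G'(0)$ is unconstrained.
    \item \emph{Robin case, $\theta=1$.} We have $G'(0)=G(0)$, so $N(G'(0)-G(0))=0$. Everything left is $-G'(0)+\tfrac12G''(0)-\tfrac1{2N}G''(0)+O(1)=O(1)$, hence $|\mathbf{b}_\ell^N(G)|\le C_G$.
    \item \emph{Neumann case, $\theta>1$.} We have $G'(0)=0$, so what survives is $-DN^{2-\theta}G(0)$ plus $O(1)$ terms. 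This gives $C_G(1+N^{2-\theta})$.
\end{itemize}

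The only delicate point is the Robin case. There we have to verify that the order-$N$ contributions from the difference quotient $N^2(G(\tfrac2N)-G(\tfrac1N))$ and from the killing term $N^{2-\theta}G(\tfrac1N)$ cancel exactly. This requires expanding both around the same point $u=0$, rather than for instance around $1/N$, and carrying the Taylor remainder to third order. That way the leftover errors are $O(1)$ after multiplication by $N^2$.

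The remainder bounds are uniform in $N$, with constants depending on $\|G\|_\infty$, $\|G'\|_\infty$, $\|G''\|_\infty$ and $\|G'''\|_\infty$ only. Collecting these constants into $C_G$ and adding the analogous bound for $\mathbf{b}_r^N(G)$ concludes the proof.
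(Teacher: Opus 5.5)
Your proof is correct and follows essentially the paper's argument: you Taylor expand $G$ at $u=0$ and apply the $k=0$ boundary condition of $\mathcal{S}_\theta$ in each regime. In the Robin case the order-$N$ terms cancel exactly through $G'(0)=G(0)$. The only difference is cosmetic: you use a single second-order expansion for all three regimes, whereas the paper uses cruder first-order (mean-value type) bounds in the Dirichlet and Neumann cases.
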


\begin{proof}
    For simplicity, denote $h=\frac1N$. Recall that
    \begin{equation*}
        \mathbf{b}_\ell^N(G) = Dh^{-2}\big( G(2h)-G(h)-h^\theta G(h)\big) - DG''(h)
    \end{equation*}
    where $G\in\mathcal{S}_\theta$. Assume first that $0\le\theta <1$. Since $G(0)=0$, a Taylor expansion yields that
    \begin{equation*}
        h^{-2}\big| G(2h)-G(h)\big| \le h^{-1}\| G'\|_\infty \qquad\mbox{ and }\qquad h^{\theta -2} G(h) \le h^{\theta -1}\| G'\|_\infty \le h^{-1}\| G'\|_\infty ,
    \end{equation*}
    so we indeed have that $|\mathbf{b}_\ell^N(G)|\le C_GN$ for some constant $C_G>0$. 

    \medskip

    Assume now that $\theta =1$. A Taylor expansion yields that, as $h$ goes to $0$,
    \begin{align*}
      D^{-1}\mathbf{b}_\ell^N & = h^{-2} \big( G(2h)-G(h)\big) - h^{-1}G(h)  - G''(h) \\
      & = h^{-1}G'(0) + \frac32 G''(0) + O(h) - h^{-1}G(0) - G'(0)+ \frac12 hG''(0) + O(h^2) - G''(0) \\
      & = \frac12 G''(0) - G'(0) + O(h)
   \end{align*}
   as the divergent term vanished thanks to the boundary condition $G'(0)=G(0)$. Thus, $|\mathbf{b}_\ell^N(G)|$ is bounded by a constant $C_G>0$ that depends only on $G$.

   \medskip

   Finally, assume that $\theta >1$. Since $G'(0)=0$, a Taylor expansion yields that, as $h$ goes to~$0$,
   \begin{equation*}
        h^{-2}\big| G(2h)-G(h)\big| \le \frac32 \| G''\|_\infty \qquad\mbox{ and }\qquad h^{\theta -2} \big| G(h)\big| = O(h^{\theta -2}) = O(N^{2-\theta}),
   \end{equation*}
   so we indeed have that $|\mathbf{b}_\ell^N(G)|\le C_G(1+N^{2-\theta})$ for some constant $C_G>0$. The proof for the right boundary is similar.
\end{proof}

\medskip

We are now in position to prove tightness of the integral term associated to the boundary part. We split the proof for different values of the parameter $\theta$.
\begin{itemize}
    \item Assume that $0\le \theta <1$. Applying \cref{lemma:boundary_part_tightness} and \cref{lemma:boundary_terms_estimate}, for any~$0\le s\le t\le T$, we have
    \begin{align*}
        \E_{\mu^N}\left[ \left( \int_s^t \mathcal{Y}_r^N(\mathbf{b}_\ell^N(G)\delta_1)\diff r\right)^2\right] & = \frac1N\E_{\mu^N}\left[ \left( \int_s^t \mathbf{b}_\ell^N(G)^\dagger \bar{\mathbf{Q}}_1(r)\diff r\right)^2\right] \\
        & \le C (t-s)^{1+\gamma} N^{-1}N^2N^{2\gamma +\theta -2} \\
        & = C (t-s)^{1+\gamma} N^{2\gamma +\theta -1}.
    \end{align*}
    Choosing $0<\gamma <\frac{1-\theta}{2}<\frac12$ yields the desired estimate. Therefore, the integral term is tight, and even vanishes in $L^2$ as $N$ goes to infinity.
    \item Assume that $\theta =1$. By a similar computation, we get the bound
    \begin{equation*}
        \E_{\mu^N}\left[ \left(\int_s^t \mathcal{Y}_r^N(\mathbf{b}_\ell^N(G)\delta_1)\diff r\right)^2\right] \le C(t-s)^{1+\gamma}N^{2\gamma -2},
    \end{equation*}
    so choosing any $\gamma\in (0,1)$ gives that the integral term is tight, and even vanishes in $L^2$ as $N$ goes to infinity.
    \item Assume that $1<\theta \le 2$. Then the bound from \cref{lemma:boundary_terms_estimate} becomes $|\mathbf{b}_\ell^N(G) | \le C_GN^{2-\theta}$. Therefore, by \cref{lemma:boundary_part_tightness}, we have
    \begin{align*}
        \E_{\mu^N}\left[ \left(\int_s^t \mathcal{Y}_r^N(\mathbf{b}_\ell^N(G)\delta_1)\diff r\right)^2\right] & \le  C (t-s)^{1+\gamma}N^{-1}N^{4-2\theta}N^{2\gamma +\theta -2}\\
        & \le C (t-s)^{1+\gamma}N^{2\gamma -\theta +1}.
    \end{align*}
    Choosing $0<\gamma < \frac{\theta -1}{2} \le\frac12$ gives the desired bound, and the integral term is tight, and even vanishes in $L^2$ as $N$ goes to infinity.
    \item Assume that $\theta >2$. Then, the bound from \cref{lemma:boundary_terms_estimate} becomes $|\mathbf{b}_\ell^N(G) | \le C_G$. Therefore, using the fact that $\bar{\mathbf{Q}}_1$ is bounded, we directly get that
    \begin{equation*}
        \E_{\mu^N}\left[ \left(\int_s^t \mathcal{Y}_r^N(\mathbf{b}_\ell^N(G)\delta_1)\diff r\right)^2\right] \le CN^{-1}(t-s)^2 
    \end{equation*}
    which implies that the integral term is tight, and even vanishes in $L^2$ as $N$ goes to infinity.
\end{itemize}

\noindent This concludes the proof of tightness of the integral term.

\subsubsection{Martingale term}
\label{sec:martingale_term_tightness}

In order to prove tightness of the martingale term, we simply show that it converges to a Gaussian process relying on the following general result, which can be found in \cite[Theorem VIII.3.12]{jacod_limit_2003}.

\begin{theorem}
    Let $(M_t^N,\; t\in [0,T])_{N\ge 1}$ be a sequence of càdlàg real-valued martingales and denote by $\langle M^N\rangle_t$ its quadratic variation for all $t\ge 0$. Assume that:
   \begin{enumerate}[label=(\roman*)]
      \item The quadratic variation has continuous trajectories almost surely ;
      \item The following limit holds 
      \begin{equation*}
         \lim_{N\to\infty} \E\left[ \sup_{0\le t\le T} |M_t^N-M_{t^-}^N|\right] = 0 \; ;
      \end{equation*}
      \item For any $t\in [0,T]$, the sequence of random variables $\big(\langle M^N\rangle_t\big)_{N\ge 1}$ converges in probability to~$c(t)$ where $c:[0,T]\longrightarrow\R_+$ is a continuous deterministic function.
   \end{enumerate}
   Then, $(M^N)$ converges in distribution in the Skorokhod topology of $\mathcal{D}([0,T],\R)$ to a mean-zero Gaussian process $(M_t)_{t\ge 0}$ which is a martingale with continuous trajectories and quadratic variation $\langle M\rangle_t = c(t)$.
\end{theorem}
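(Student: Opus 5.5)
The plan is the classical two-step scheme: first prove tightness in $\mathcal{D}([0,T],\R)$ with continuous limit points, then identify every limit point through Lévy's characterization. Throughout, I replace $M^N$ by its localized version. Let $\sigma_N = \inf\{t\le T : \langle M^N\rangle_t > c(T)+1\}\wedge T$. By (i), $\langle M^N\rangle$ is continuous, so $\langle M^N\rangle_{t\wedge\sigma_N}\le c(T)+1$ deterministically.

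I first upgrade the pointwise convergence in (iii) to uniform convergence. Each $\langle M^N\rangle$ is nondecreasing, and $c$ is continuous and nondecreasing. A Dini/Pólya type argument on a fine deterministic grid of $[0,T]$ then gives $\sup_{t\le T}|\langle M^N\rangle_t - c(t)|\to 0$ in probability. In particular $\p(\sigma_N<T)\to 0$. Hence $M^N$ and $M^N_{\cdot\wedge\sigma_N}$ have the same limit in law, and it suffices to treat the stopped martingales. These are square integrable, with $\E[\sup_{t\le T}|M^N_{t\wedge\sigma_N}|^2]\le 4(c(T)+1)$ by Doob's inequality.

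\textbf{Tightness.} I apply Aldous' criterion. Let $\tau$ be a stopping time bounded by $T$ and let $\delta>0$. Then
\begin{equation*}
\E\big[|M^N_{(\tau+\delta)\wedge\sigma_N}-M^N_{\tau\wedge\sigma_N}|^2\big]=\E\big[\langle M^N\rangle_{(\tau+\delta)\wedge\sigma_N}-\langle M^N\rangle_{\tau\wedge\sigma_N}\big].
\end{equation*}
The right-hand side is bounded by $\sup_{t}(c(t+\delta)-c(t))$ plus a term that is small by the uniform convergence, using bounded convergence since the integrand is bounded by $c(T)+1$. This tends to $0$ uniformly in $N$ as $\delta\to 0$ and $N\to\infty$. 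Together with the $L^2$ bound at fixed times, this gives tightness. Condition (ii) forces the maximal jump to vanish in probability, so every limit point $M$ has continuous trajectories ($C$-tightness).

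\textbf{Identification.} Fix $\lambda\in\R$ and consider
\begin{equation*}
Z^N_t=\exp\big(i\lambda M^N_{t\wedge\sigma_N}+\tfrac{\lambda^2}{2}\langle M^N\rangle_{t\wedge\sigma_N}\big).
\end{equation*}
Itô's formula for semimartingales with jumps writes $Z^N$ as a bounded local (hence true) martingale plus an error term. Because $\langle M^N\rangle$ is continuous, the error comes only from jumps. It is a sum over jumps of $e^{i\lambda\Delta M}-1-i\lambda\Delta M+\tfrac{\lambda^2}{2}(\Delta M)^2$, compensated, and is bounded by $C_\lambda\sup_t|\Delta M^N_t|\cdot \sum_t|\Delta M^N_t|^2$ up to martingale terms. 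The sum of squared jumps is bounded in $L^1$ by $\E[\langle M^N\rangle_{\sigma_N}]\le c(T)+1$, since the predictable quadratic variation compensates $[M^N]$. With (ii) and $|Z^N|\le e^{\lambda^2(c(T)+1)/2}$, this shows that $\E[(Z^N_t-Z^N_s)\Phi]\to 0$ for any bounded continuous functional $\Phi$ of the path up to time $s$.

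Passing to the limit along a subsequence, where $M^N$ converges in law to $M$ and $\langle M^N\rangle$ converges uniformly to $c$, I obtain
\begin{equation*}
\E\big[e^{i\lambda(M_t-M_s)}\,\big|\,\mathcal F_s\big]=e^{-\lambda^2(c(t)-c(s))/2}.
\end{equation*}
Hence $M$ has independent centered Gaussian increments with variance $c(t)-c(s)$, so it is a continuous Gaussian martingale with $\langle M\rangle=c$. Its law is uniquely determined, so the whole sequence converges.

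\textbf{Main obstacle.} The main obstacle is the jump error term in Itô's formula. It requires both the vanishing of jumps from (ii), which is only in $L^1$ of the supremum and hence needs the product bound above (applied via Cauchy–Schwarz or a truncation of the jump size), and the localization by $\sigma_N$, which supplies the integrability that (iii) alone does not give. I would first try to handle it by truncating jumps at level $\varepsilon$. On the event $\{\sup_t|\Delta M^N_t|\le\varepsilon\}$ the Taylor remainder is at most $C_\lambda\varepsilon\sum_t|\Delta M^N_t|^2$, and the complementary event has probability at most $\varepsilon^{-1}\E[\sup_t|\Delta M^N_t|]\to 0$.
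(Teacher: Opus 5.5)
The paper gives no proof of this statement; it quotes it from Jacod--Shiryaev. Your self-contained route is genuinely different and mostly sound: localization by $\sigma_N$, the P\'olya upgrade of (iii) to uniform convergence, Aldous' criterion, $C$-tightness from (ii), and identification through the exponential martingale $Z^N$. The gap is exactly at the step you flagged.

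Write the It\^o remainder as $R^N_t=\sum_{s\le t}Z^N_{s-}\,\phi(\Delta M^N_s)$ with $\phi(y)=e^{i\lambda y}-1-i\lambda y+\frac{\lambda^2}{2}y^2$. On $\{\sup_t|\Delta M^N_t|\le\varepsilon\}$ your bound $C_\lambda\varepsilon\sum_s(\Delta M^N_s)^2$ is fine. On the complementary event, however, all you have is $|R^N_t|\le C_\lambda\sum_{s\le T}(\Delta M^N_s)^2$. This quantity is bounded in $L^1$ by $c(T)+1$ but is not uniformly integrable. Hence $\p(\sup_t|\Delta M^N_t|>\varepsilon)\to0$ does not make $\E\big[|R^N_t|\ind_{\{\sup_t|\Delta M^N_t|>\varepsilon\}}\big]$ small. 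The alternatives also fail:
\begin{itemize}
\item Cauchy--Schwarz would require a bound on $\E\big[(\sum_s(\Delta M^N_s)^2)^2\big]$, which you do not have.
\item You cannot use the boundedness of $Z^N$ on that event instead. The event is not $\mathcal{F}_s$-measurable, so the martingale part of $Z^N_t-Z^N_s$ no longer has zero mean once restricted to it.
\end{itemize}
Relatedly, the local martingale part $Z^N-1-R^N$ is not bounded as you claim, only dominated by an integrable variable. That still suffices for it to be a true martingale.

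This step cannot be repaired under (i)--(iii) alone. With $\langle M^N\rangle$ the predictable bracket, as you read it and as the paper uses it (it equals $\int_0^t\Gamma^N\diff s$ there), the statement is false. Take a Brownian motion $B$, an independent Poisson process $P^N$ of rate $N^{-2}$, and set $M^N_t=B_t+N(P^N_t-tN^{-2})$. Then:
\begin{itemize}
\item $\langle M^N\rangle_t=2t$ is continuous and deterministic, so (i) and (iii) hold with $c(t)=2t$;
\item $\E[\sup_{t\le T}|\Delta M^N_t|]\le T/N\to0$, so (ii) holds;
\item yet $\sup_{t\le T}|M^N_t-B_t|\to0$ in probability, so the limit is $B$, whose quadratic variation is $t$, not $2t$.
\end{itemize}
Consistently, $\E[R^N_t]\to e^{\lambda^2t/2}-1\neq0$ in this example.

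What is missing is a Lindeberg-type control of large jumps. Any of the following closes your argument:
\begin{itemize}
\item A deterministic jump bound, as in the paper's application, where $\sup_t|\Delta M^N_t(G)|\le C\|G\|_\infty N^{-1/2}$. Then $\E|R^N_t|\le C_\lambda\|G\|_\infty N^{-1/2}(c(T)+1)\to0$ at once.
\item The Lindeberg condition $\E\big[\sum_{s\le T}(\Delta M^N_s)^2\ind_{\{|\Delta M^N_s|>\varepsilon\}}\big]\to0$ for every $\varepsilon>0$.
\item The condition $\E[\sup_t|\Delta M^N_t|^2]\to0$, provided you \emph{stop} $M^N$ at the first jump larger than $\varepsilon$ rather than restricting to an event. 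Then a single large jump survives and its square is controlled in $L^1$.
\end{itemize}
Finally, you tacitly use $M^N_0=0$, both in Doob's bound and to identify the law of $M_0$.
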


Recall expression \eqref{eq:expression_GammaN} of the integrand of the quadratic variation of the martingale $M^N(G)$. As the integrand is bounded, the quadratic variation has continuous trajectories almost surely and the first condition of the theorem is satisfied. The second condition is also satisfied, as the size of the jumps of the martingale is of order $N^{-1/2}$. Indeed, as the integral part of the martingale is continuous, we have
\begin{equation*}
    \E_{\mu^N}\left[ \sup_{0\le t\le T} |M_t^N(G)-M_{t^-}^N(G)|\right] = \E_{\mu^N}\left[ \sup_{0\le t\le T} |\mathcal{Y}_t^N(G)-\mathcal{Y}_{t^-}^N(G)|\right] \le \frac{C}{\sqrt{N}}\| G\|_\infty
\end{equation*}
as a particle jump or energy transfer changes the configuration on at most two sites by at most one unit. Therefore, we only need to check the third condition of the theorem. 

\medskip

For this, we need to introduce further notations. For $x\in\Lambda_N$ and $\ell \ge 1$, we define the box 
\begin{equation}\label{eq:lambda_x_ell}
    \Lambda_x^\ell = \begin{cases}
        \{1,\hdots ,2\ell +1\} & \mbox{ if } x\in \{1,\hdots, \ell\},\\
        \{x-\ell ,\hdots ,x+\ell\} & \mbox{ if } x\in \{\ell +1,\hdots ,N-\ell -1\},\\
        \{N-2\ell -1,\hdots ,N-1\} & \mbox{ if } x\in \{N-\ell ,\hdots ,N-1\}.
    \end{cases}
\end{equation}
This is a box of size $2\ell +1$, included in $\Lambda_N$, that contains $x$ but is not necessarily centered at~$x$ if $x$ is close to the boundary. Then, we define the local averages of the conserved quantities over the box $\Lambda_x^\ell$ as
\begin{equation}\label{eq:local_averages}
    \xi_x^\ell = \frac{1}{2\ell +1}\sum_{y\in\Lambda_x^\ell} \xi_y,\qquad  \eta_x^\ell = \frac{1}{2\ell +1}\sum_{y\in\Lambda_x^\ell} \eta_y\qquad\mbox{ and }\qquad \mathbf{Q}_x^\ell = \begin{pmatrix}
        \xi_x^\ell \\
        \eta_x^\ell
    \end{pmatrix}.
\end{equation}
If $f:\Omega_N\longrightarrow\R$ is a local function\footnote{\textit{i.e.}~a function that depends on a finite, independent of $N$, number of coordinates.}, then we define the function $\Psi_f : \mathcal{T}_\kappa\longrightarrow\R$ to be the expectation under the stationary measure~$\Psi_f(\rho ,\mathcal{E}) = \nu_{\rho ,\mathcal{E}}^N(f)$ for any $(\rho ,\mathcal{E})\in\mathcal{T}_\kappa$. For $x\in\Lambda_N$, let $\tau_x : \Omega_N\longrightarrow\Omega_N$ be the translation operator defined by $(\tau_xf)(\eta ) = f(\tau_x\eta )$ where $\tau_x\eta$ is the configuration defined by $(\tau_x\eta)_y = \eta_{x+y}$ for any $y\in\Lambda_N$.

\medskip

Then, we have the following \emph{local equilibrium property}.

\begin{proposition}[Local equilibrium property]\label{prop:local_equilibrium}
    Let $f:\Omega_N\longrightarrow\R$ be a local function, and let~$\Lambda_f \subset\Lambda_N$ be the set of sites $x$ for which $\tau_xf$ is supported in $\Lambda_N$. For any~$t\in [0,T]$, we have
    \begin{equation*}
        \limsup_{\varepsilon\to 0}\limsup_{N\to\infty} \sup_{x\in\Lambda_f} \E_{\mu^N}\left[ \bigg| \int_0^t  \Big( \tau_xf(\eta (s)) - \Psi_f \big( \xi_x^{\varepsilon N}(s),\eta_x^{\varepsilon N}(s)\big)\Big)\diff s\bigg|\right] = 0.
    \end{equation*}
\end{proposition}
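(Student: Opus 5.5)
The approach is the classical one-block/two-blocks scheme of Guo--Papanicolaou--Varadhan, carried out via the entropy inequality. Instead of a relative entropy bound, I will use the Dirichlet form bound with respect to a reference product measure. Since the initial measure $\mu^N$ is arbitrary, I plan to use the reference measure $\nu^N_{\mathbf{m}}$: a product measure $\bigotimes_x \nu_{\bar\rho(x/N),\bar{\mathcal E}(x/N)}$ with a smooth profile in $\mathring{\mathcal T}_\kappa$ that matches $\mathbf q_\ell$ at $0$ and $\mathbf q_r$ at $1$ (for instance the linear interpolation). Its relative entropy with respect to $\mu^N$ is trivially bounded by $CN$, because the state space has cardinality $(\kappa+1)^{N-1}$ and $\nu^N_{\mathbf m}$ gives every configuration mass at least $c^N$. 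The standard computation $\frac{d}{dt}H(\mu^N_t|\nu^N_{\mathbf m})\le -N^2 D_N(\sqrt{f_t})+CN$ then gives $\int_0^t D_N(\sqrt{\bar f_t})\,ds\le C/N$ for the time-averaged density $\bar f_t$. The error term comes from the non-invariance of $\nu^N_{\mathbf m}$ under $\mathscr L_0$: its profile has gradient of order $1/N$, so the error per bond is $O(1/N)\times$ a Dirichlet form contribution, and it is handled by Cauchy--Schwarz in the usual way. Here $D_N$ is the bulk Dirichlet form, consisting of the particle exchanges and the energy transfers.

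Fix $x\in\Lambda_f$. By the entropy inequality with parameter $\gamma N$ and Feynman--Kac, the expectation is bounded by $\frac{C}{\gamma}+\frac{t}{\gamma N}\sup_g\{\gamma N\langle V_x,g\rangle-N^2D_N(\sqrt g)\}$, where $V_x=\tau_xf-\Psi_f(\mathbf Q_x^{\varepsilon N})$. Here the uniformity in $x$ is important: $\Lambda_x^{\varepsilon N}$ is a box inside $\Lambda_N$, shifted near the boundary, and the argument only uses the bulk Dirichlet form restricted to that box. I therefore expect all bounds to be independent of $x$. The supremum is split in two steps. In the one-block step, $\tau_xf$ is replaced by its average over a box of size $\ell$ around $x$ (contained in $\Lambda_x^{\varepsilon N}$), which is then replaced by $\Psi_f(\mathbf Q^\ell_x)$. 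To do this, I condition $g$ on the number of particles and the energy in the box $\Lambda_x^\ell$. I then use the spectral gap of the EPE on boxes, of order $\ell^{-2}$, which is established in \cite{da_cunha_stationary_2026}, and the equivalence of ensembles from the same reference to compare canonical measures with $\nu_{\mathbf Q^\ell}$. Because the canonical measures restricted to the box do not depend on the reference profile, a nonconstant $\nu^N_{\mathbf m}$ causes no harm: the box marginal is a product measure whose parameters vary by $O(\ell/N)$, which modifies the Dirichlet form only by a factor $1+o(1)$.

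In the two-blocks step, $\Psi_f(\mathbf Q_x^\ell)$ is replaced by $\Psi_f(\mathbf Q_x^{\varepsilon N})$. Since $\Psi_f$ is a polynomial and therefore Lipschitz on $\mathcal T_\kappa$, it suffices to bound $|\mathbf Q^\ell_y-\mathbf Q^\ell_z|$ for $|y-z|\le\varepsilon N$, where both boxes lie in $\Lambda_x^{\varepsilon N}$. This is handled by the standard moving-particle lemma: the cost of transporting a particle, or one unit of energy together with its particle, over a distance $\varepsilon N$ is bounded by $\varepsilon N$ times the Dirichlet form along the path. Together with the bound $D_N\le C/N$, this yields the two-blocks estimate after letting $N\to\infty$, then $\ell\to\infty$, then $\varepsilon\to0$.

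I expect the main obstacle to be the energy component of the moving-particle lemma: energy can only move with a particle or between adjacent particles, so paths must be built through configurations that may contain holes. I would first route the energy via particle swaps, which carry energy along with the particle, and use the energy exchanges only when two particles are neighbours. The other delicate point is checking that the one-block step near the boundary really needs no boundary generator, which holds because $\Lambda_x^\ell$ is always taken inside $\Lambda_N$.
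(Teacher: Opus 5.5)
Your plan is essentially the paper's proof. The shared ingredients are:
\begin{itemize}
\item a product reference measure interpolating linearly between the reservoir parameters;
\item the counting bound $H(\mu^N|\bar\nu^N)\le CN$;
\item the entropy inequality with parameter $\gamma N$ plus Feynman--Kac, keeping the absolute value outside the time integral, which is what makes a single-site replacement possible;
\item a one-block step in which the relevant function has zero mean under every canonical measure on $\Lambda_x^\ell$, so that the spectral gap of order $\ell^{-2}$ against the penalty $\frac{N}{4\gamma}\mathfrak D_N$ leaves an error $O(\gamma\ell^2/N)$, followed by equivalence of ensembles;
\item a two-blocks step based on the moving particle-energy lemma.
\end{itemize}
Your intermediate replacement of $\tau_x f$ by its spatial average over the box is unnecessary but harmless. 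It is handled by the same canonical-mean-zero argument, and the paper simply compares $\tau_x f$ directly with its canonical expectation $f_x^\ell$.

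One step, however, is misstated: the order of limits in the two-blocks estimate. As written, it cannot be obtained with the tools you list.
\begin{itemize}
\item For boxes at distance up to $\varepsilon N$, the moving-particle lemma only leaves a penalty of order $\frac{1}{\gamma\varepsilon}$ in front of the two-box Dirichlet form. In your time-averaged formulation with $D_N\le C/N$, this appears as a fictitious-bond Dirichlet form of order $\varepsilon$ that does not vanish as $N\to\infty$.
\item Replacing the inhomogeneous reference measure on the two boxes by a homogeneous one costs $O(\ell\varepsilon)$.
\item Combined with the Poincar\'e constant $\ell^2$, this gives errors of order $\ell^2\gamma\varepsilon$ and $\ell\varepsilon$. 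Both blow up if $\ell\to\infty$ at fixed $\varepsilon$, and the compactness version also needs $\varepsilon\to0$ before the law of large numbers in $\ell$.
\end{itemize}
The order that works is $N\to\infty$, then $\varepsilon\to0$, then $\ell\to\infty$, as in the paper. This suffices because $\ell$ is auxiliary. For every fixed $\ell$, the quantity in the proposition is bounded by $\limsup_N$ of the one-block error at scale $\ell$ plus $\limsup_{\varepsilon}\limsup_N$ of the two-blocks error; then let $\ell\to\infty$.

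A minor point: $\Psi_f$ is not a polynomial in $(\rho,\mathcal E)$; for instance $\nu_{\rho,\mathcal E}(\eta_x^2)$ contains $\mathcal E^2/\rho$. It is, however, continuous on the compact triangle $\mathcal T_\kappa$, hence uniformly continuous. That is all you need to reduce the problem to $|\mathbf Q_x^\ell-\mathbf Q_x^{\varepsilon N}|$.
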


We defer the proof of this proposition to \cref{sec:local_equilibrium}, and we now show how it is used to prove convergence of the quadratic variation of the martingale. Notice that for any $(\rho ,\mathcal{E})\in\mathcal{T}_\kappa$, the expectation of the matrix $\Upsilon_x(\eta )$ defined in \eqref{eq:matrix_Upsilon} under the stationary measure $\nu_{\rho ,\mathcal{E}}^N$ is equal to 
\begin{equation*}
    \Psi_\Upsilon (\rho ,\mathcal{E}) = 2D\chi (\rho ,\mathcal{E})
\end{equation*}
where $\chi$ is the compressibility matrix defined in \eqref{defin:compressibility_matrix}. Therefore, by the local equilibrium property \cref{prop:local_equilibrium}, and replacing the discrete gradients by their continuous counterparts, the bulk term in the expression of the quadratic variation can be rewritten as 
\begin{equation*}
    2D\int_0^t \frac1N\sum_{x=1}^{N-2} \nabla G\big( \tfrac xN\big)^\dagger \chi \big( \xi_x^{\varepsilon N}(s) , \eta_x^{\varepsilon N}(s)\big) \nabla G\big( \tfrac xN\big)\diff s + o_{N,\varepsilon}(1)
\end{equation*}
where $o_{N,\varepsilon}(1)$ is a term that vanishes in probability as $N$ goes to infinity, and then $\varepsilon$ goes to zero. On the other hand, the expectation of the boundary matrices $B_\ell$ and $B_r$ defined in \eqref{eq:boundarymatrix_left} under the stationary measure $\nu_{\rho ,\mathcal{E}}^N$ are equal to
\begin{equation*}
    \Psi_{B_\ell}(\rho ,\mathcal{E}) = \Sigma_\ell (\rho ,\mathcal{E})\qquad\mbox{ and }\qquad \Psi_{B_r}(\rho ,\mathcal{E}) = \Sigma_r (\rho ,\mathcal{E})
\end{equation*}
Observe that when $0\le \theta <1$, the boundary term in the expression of the quadratic variation vanishes as $N$ goes to infinity because the test function $G$ vanishes at the boundary. When~$\theta >1$, the boundary term is of order $O(N^{1-\theta})$ so it also vanishes as $N$ goes to infinity. It survives only when~$\theta =1$, in which case the local equilibrium property \cref{prop:local_equilibrium} allows to replace the boundary term by
\begin{equation*}
    \int_0^t G(0)^\dagger \Sigma_\ell \big( \xi_1^{\varepsilon N}(s),\eta_1^{\varepsilon N}(s)\big) G(0)\diff s + \int_0^t G(1)^\dagger \Sigma_r\big( \xi_{N-1}^{\varepsilon N}(s),\eta_{N-1}^{\varepsilon N}(s)\big) G(1)\diff s + o_{N,\varepsilon}(1)
\end{equation*}
where $o_{N,\varepsilon}(1)$ is a term that vanishes in probability as $N$ goes to infinity, and then $\varepsilon$ goes to zero. Notice that for any $\varepsilon >0$ and any $x\in\{ \varepsilon N+1 \hdots ,(1-\varepsilon )N-1\}$, up to a multiplicative constant that converges to one, we have
\begin{equation*}
    \xi_x^{\varepsilon N}(s) = \pi_s^N * (\iota_\varepsilon \mathbf{e}_1) \big( \tfrac xN\big)\qquad\mbox{ and }\qquad \eta_x^{\varepsilon N}(s) = \pi_s^N * (\iota_\varepsilon \mathbf{e}_2) \big( \tfrac xN\big)
\end{equation*}
where $\iota_\varepsilon \coloneq \frac{1}{2\varepsilon}\ind_{[-\varepsilon ,\varepsilon ]}$ is an approximation of the identity, $*$ is the usual convolution operator and~$\mathbf{e}_1$ and $\mathbf{e}_2$ are the canonical basis vectors of $\R^2$. Similarly, at the left boundary, we have
\begin{equation*}
    \xi_1^{\varepsilon N}(s) = \pi_s^N * (\iota_{2\varepsilon} \mathbf{e}_1) (\tfrac 1N)\qquad\mbox{ and }\qquad \eta_1^{\varepsilon N}(s) = \pi_s^N * (\iota_{2\varepsilon} \mathbf{e}_2) (\tfrac 1N)
\end{equation*}
and the same at the right boundary. Therefore, thanks to the hydrodynamic limit \cref{thm:hydrodynamic_limit}, after letting $N$ go to infinity, and then $\varepsilon$ go to zero, we obtain that the quadratic variation of the martingale converges in probability to
\begin{multline*}
    2D\int_0^t \int_0^1 \nabla G(u)^\dagger \chi \big( \mathbf{q}_s (u)\big) \nabla G(u)\diff u\diff s \\
    +\ind_{\{\theta =1\}}\int_0^t \Big( G(0)^\dagger \Sigma_\ell \big(\mathbf{q}_s(0)\big) G(0) + G(1)^\dagger \Sigma_r \big(\mathbf{q}_s(1)\big) G(1)\Big)\diff s
\end{multline*}
where $\rho_t(u)$ and $\mathcal{E}_t(u)$ are the solutions of the hydrodynamic equations \eqref{eq:hydrodynamic_equation}. Because of the boundary conditions satisfied by the test function $G$ in the case $\theta =1$, this is nothing but
\begin{equation*}
    \int_0^t \| \nabla_\theta G\|_{\theta ,\mathbf{q}_s}^2\diff s
\end{equation*}
defined in \eqref{eq:inner_product_theta}. This shows that the third condition of the theorem is satisfied, and therefore the martingale term converges in distribution to a Gaussian process $(M_t(G))_{t\in [0,T]}$. This limiting process has continuous trajectories and is a mean-zero martingale with quadratic variation~$\int_0^t \| \nabla_\theta G\|_{\theta ,\mathbf{q}_s}^2\diff s$.

\medskip

This concludes the proof of tightness of the martingale term, and therefore of the whole fluctuation field $\mathcal{Y}^N$.

\section{Identification of limit points}
\label{sec:identification}

As the sequence of processes $(\mathcal{Y}^N)_{N\ge 1}$ is tight, it admits limit points in distribution. Consider such a limit point $(\mathcal{Y}_t)_{t\in [0,T]}$. We start by proving that it is a solution to the martingale problem associated to the generalized Ornstein-Uhlenbeck process defined in \cref{defin:OU}. As there is a unique (in law) solution to this martingale problem, there is a unique limit point, and therefore the whole sequence $(\mathcal{Y}^N)_{N\ge 1}$ converges in distribution to this limit point.

\medskip

In \cref{sec:tightness}, we proved a decomposition of the operator $A_N^\theta$ into a sum of a Laplacian, discretization error and boundary terms. Moreover, we proved that the discretization error and boundary terms vanish in the limit, so that
\begin{equation*}
    \int_0^t \mathcal{Y}_s^N(A_N^\theta G)\diff s \xrightarrow[N\to +\infty]{} D\int_0^t \mathcal{Y}_s(\Delta_\theta G)\diff s \quad\mbox{ in distribution.}
\end{equation*}

Therefore, letting $N$ go to infinity in the expression \eqref{def:dynkin_martingale} of the Dynkin martingale, we obtain that
\begin{equation*}
    M_t(G) = \mathcal{Y}_t(G) - \mathcal{Y}_0(G) - D\int_0^t \mathcal{Y}_s(\Delta_\theta G)\diff s
\end{equation*}
defines a mean-zero Gaussian martingale with respect to the natural filtration of $(\mathcal{Y}_t)_{t\in [0,T]}$ and its quadratic variation is given by
\begin{equation*}
    \langle M(G)\rangle_t = \int_0^t \| \nabla_\theta G\|_{\theta ,\mathbf{q}_s}^2\diff s.
\end{equation*}
Notice that since the martingale is Gaussian, starts from $0$ and has deterministic quadratic variation, the independence of the whole martingale from the initial condition $\mathcal{Y}_0$ is guaranteed. We want to apply \cref{prop:uniqueness_OU} to guarantee uniqueness in law of the solution to the martingale problem. We already know that the operator $D\Delta_\theta$ generates a semigroup $(T_t^\theta)_{t\ge 0}$ on $\mathcal{S}_\theta$. We only have to check condition \eqref{eq:condition_uniqueness_OU}, that is
\begin{equation*}
    \int_s^t \| \nabla_\theta T_{t-r}^\theta G\|_{\theta ,\mathbf{q}_r}^2\diff r <\infty
\end{equation*}
for any $0\le s\le t\le T$ and any $G\in\mathcal{S}_\theta$. Recall that we have $\mathbf{q}_t\in\mathring{\mathcal{T}}_\kappa$ for any $t\in (0,T]$, and~$\mathbf{q}_\ell ,\mathbf{q}_r\in\mathring{\mathcal{T}}_\kappa$. Therefore, as the matrices $\chi$, $\Sigma_\ell$ and $\Sigma_r$ depend continuously on their parameters, there exists a constant $C=C(\kappa ,\mathbf{q}_\ell ,\mathbf{q}_r,\mathbf{q}^\mathrm{ini})>0$ such that
\begin{equation*}
    \| \nabla_\theta T_{t-r}^\theta G\|_{\theta ,\mathbf{q}_r}^2 \le C \left(\int_0^1  \big| \nabla_\theta T_{t-r}^\theta G(u)\big|^2\diff u+ \ind_{\{\theta =1\}}\big( \big|T_{t-r}^\theta G(0)\big|^2 + \big|T_{t-r}^\theta G(1)\big|^2\big)\right) .
\end{equation*}
By an integration by parts and thanks to the boundary conditions satisfied by $G$, the quantity on the right-hand side of this inequality is equal to
\begin{equation*}
    -\langle T_{t-r}^\theta G,\Delta_\theta T_{t-r}^\theta G\rangle_{L^2} = \frac{1}{2D} \frac{\diff}{\diff r} \| T_{t-r}^\theta G\|_{L^2}^2
\end{equation*}
hence we have
\begin{equation*}
    \int_s^t \| \nabla_\theta T_{t-r}^\theta G\|_{\theta ,\mathbf{q}_r}^2\diff r \le \frac{C}{2D} \| G\|_{L^2}^2 <\infty .
\end{equation*}
This shows that uniqueness in law of the solution to the martingale problem is satisfied, and the sequence of fluctuation fields $(\mathcal{Y}^N)_{N\ge 1}$ converges in distribution to this unique solution. This proves the second part of \cref{thm:dynamical_fluctuations}. 

\medskip

We now prove the decomposition \eqref{eq:decomposition_limit_point} of the limit point~$\mathcal{Y}_t$. Recall Dynkin's formula in \cref{prop:dynkin_formula}, taking a time-dependent test function $G_\cdot : [0,T]\longrightarrow\mathcal{S}_\theta$ differentiable with respect to time, we have that the process defined by
\begin{equation*}
    \mathbb{M}_t^N(G_\cdot )= \mathcal{Y}_t^N(G_t) - \mathcal{Y}_0^N(G_0) - \int_0^t \mathcal{Y}_s^N(\partial_sG_s +A_N^\theta G_s)\diff s
\end{equation*}
also defines a mean-zero martingale with respect to the natural filtration of $(\mathcal{Y}_t^N)_{t\in [0,T]}$. By repeating the same arguments as in \cref{sec:martingale_term_tightness}, we can show that this martingale converges in distribution, in the Skorokhod topology, to the mean-zero Gaussian martingale
\begin{equation*}
    \left( \int_0^t \diff M_s(G_s)\right)_{t\in [0,T]} \mbox{ whose quadratic variation is given by } \int_0^t \| \nabla_\theta G_s\|_{\theta ,\mathbf{q}_s}^2\diff s.
\end{equation*}
Fix $t\in [0,T]$ and consider this process restricted to the time interval $[0,t]$. If we take the time-dependent test function $F_s = T_{t-s}^\theta G$ where $G\in\mathcal{S}_\theta$ is a fixed test function, then the martingale writes as
\begin{equation*}
    \mathbb{M}_t^N(F_\cdot )= \mathcal{Y}_t^N(G) - \mathcal{Y}_0^N(T_t^\theta G) - \int_0^t \mathcal{Y}_s^N\big( \partial_sF_s+A_N^\theta F_s\big)\diff s.
\end{equation*}
In order to prove the decomposition \eqref{eq:decomposition_limit_point}, we only need to show that the integral term vanishes in the limit. Decompose the integrand as
\begin{equation*}
    \mathcal{Y}_s^N( \partial_sF_s +A_N^\theta F_s)  = \mathcal{Y}_s^N(\partial_sT_{t-s}^\theta G + D\Delta_\theta T_{t-s}^\theta G)+ \mathcal{Y}_s^N (A_N^\theta T_{t-s}^\theta G - D\Delta_\theta T_{t-s}^\theta G).
\end{equation*}
By definition of the semigroup $(T_t^\theta)_{t\ge 0}$, the first term on the right-hand side of this equality is identically zero. Then, by repeating the same arguments as in \cref{sec:tightness}, we can show that 
\begin{equation*}
    \E_{\mu^N}\left[ \left( \int_0^t \mathcal{Y}_s^N (A_N^\theta T_{t-s}^\theta G - D\Delta_\theta T_{t-s}^\theta G)\diff s\right)^2\right] \xrightarrow[N\to +\infty]{} 0
\end{equation*}
which shows that the integral term vanishes in the limit. This proves the decomposition \eqref{eq:decomposition_limit_point} of the limit point $\mathcal{Y}_t$, and concludes the proof of \cref{thm:dynamical_fluctuations}. \hfill\qedsymbol

\section{The stationary picture}
\label{sec:stationary_fluctuations}

\subsection{Proof of the hydrostatic limit}

Let us first prove the hydrostatic limit \cref{thm:hydrostatic_limit}. Let~$\delta >0$ and $G: [0,1]\longrightarrow\R^2$ be a continuous test function. Applying successively Markov's inequality and the inequality $(a+b)^2\le 2(a^2+b^2)$, we have
\begin{multline*}
    \mu_\mathrm{ss}^N\left( \bigg|\frac1N\sum_{x\in\Lambda_N}G\Big( \frac xN\Big)^\dagger\mathbf{Q}_x - \int_0^1 G(u)^\dagger\mathbf{q}_\mathrm{ss}(u)\diff u\bigg| > \delta\right) \\
    \le \frac{2}{\delta^2}\E_{\mu_\mathrm{ss}^N}\left[ \bigg( \frac1N\sum_{x\in\Lambda_N} G\Big( \frac xN\Big)^\dagger \big( \mathbf{Q}_x-\mathbf{m}_\mathrm{ss}^N(x)\big)\bigg)^2\right] \\+ \frac{2}{\delta^2}\bigg(\frac1N\sum_{x\in\Lambda_N} G\Big( \frac xN\Big)^\dagger \!\mathbf{m}_\mathrm{ss}^N(x) - \int_0^1 G(u)^\dagger\mathbf{q}_\mathrm{ss}(u)\diff u\bigg)^2.
\end{multline*}
Using the expression \eqref{eq:stationary_empirical profiles} of the empirical stationary profile $\mathrm{m}_\mathrm{ss}^N$ and the expression \eqref{eq:stationary_solution} of the stationary solution $\mathbf{q}_\mathrm{ss}$, one can check that
\begin{equation*}
    \lim_{N\to\infty} \sup_{x\in\Lambda_N} \big| \mathbf{m}_\mathrm{ss}^N(x) - \mathbf{q}_\mathrm{ss}\big( \tfrac xN\big)\big| = 0.
\end{equation*}
Approximating the integral by a Riemann sum we get that the second term on the right-hand side of the previous inequality vanishes as $N$ goes to infinity.

\medskip

Let us now prove that the expectation on the right-hand side of the previous inequality also vanishes as $N$ goes to infinity. For this, it is enough to prove that the stationary fluctuation field satisfies the variance estimate
\begin{equation}\label{eq:stationary_variance_estimate}
    \forall G\in\mathcal{S}_\theta ,\qquad \E_{\mu_\mathrm{ss}^N}\left[ \big|\mathcal{Y}_\mathrm{ss}^N(G)\big|^2\right] \le C \| G\|_N^2
\end{equation}
for some constant $C>0$. Indeed, the expectation we want to estimate is in fact equal to
\begin{equation*}
    \E_{\mu_\mathrm{ss}^N} \left[ \left( \frac{1}{\sqrt{N}}\mathcal{Y}_\mathrm{ss}^N(G)\right)^2 \right] \le \frac{C}{N}\| G\|_N^2 \le \frac{C}{N}\| G\|_\infty^2.
\end{equation*}

Then, let us prove the stationary variance estimate \eqref{eq:stationary_variance_estimate}, which uses the same techniques as the proof of \cref{prop:variance_estimate_dynamics}. Recall the decomposition \eqref{eq:decomposition_fluctuation_field_dynamics} of the fluctuation field~$\mathcal{Y}_t^N$ into an $\eta (0)$-measurable term, and a future martingale increment term (hence both terms are orthogonal). In particular, since under the stationary measure $\mu_\mathrm{ss}^N$, the law of $\mathcal{Y}_\mathrm{ss}^N$ is invariant under the dynamics, we get the identity
\begin{equation}
    \E_{\mu_\mathrm{ss}^N}\left[ \big|\mathcal{Y}_\mathrm{ss}^N(G)\big|^2\right] = \E_{\mu_\mathrm{ss}^N}\left[ \big|\mathcal{Y}_\mathrm{ss}^N(P_t^NG)\big|^2\right] + \E_{\mu_\mathrm{ss}^N}\left[ \left( \int_0^t \diff M_s^N(P_{t-s}^NG)\right)^2\right]
\end{equation}
valid for any $t\ge 0$, where we recall that $(P_t^N)_{t\ge 0}$ is the semigroup generated by the operator~$A_N^\theta$ defined in \eqref{eq:matrix_form_A_N}. We claim that, for fixed $N$, and for any $G\in\mathcal{S}_\theta$, we have that
\begin{equation*}
    P_t^NG \xrightarrow[t\to\infty]{} 0 \quad\mbox{ in }\ell^2(\Lambda_N).
\end{equation*}
Indeed, as the operator $A_N^\theta$ is negative definite, we have $\|P_t^NG\|_N^2 \le e^{-\lambda_1^Nt}\| G\|_N^2$ where~$\lambda_1^N>0$ is the smallest eigenvalue of $-A_N^\theta$. Therefore, the first term on the right-hand side of the previous identity vanishes as $t$ goes to infinity. On the other hand, we have again the estimate
\begin{equation*}
        \E_{\mu_\mathrm{ss}^N}\left[ \left( \int_0^t \diff M_s^N(P_{t-s}^NG)\right)^2\right] = \E_{\mu_\mathrm{ss}^N}\left[ \int_0^t\Gamma^N(\eta (s),P_{t-s}^NG)\diff s\right] \le C_1\int_0^t \mathscr{E}_N^\theta (P_{t-s}^NG)\diff s
\end{equation*}
thanks to \cref{prop:estimate_quadratic_variation}. Then, letting $t$ go to infinity, we get that
\begin{equation*}
    \E_{\mu_\mathrm{ss}^N}\left[ \big|\mathcal{Y}_\mathrm{ss}^N(G)\big|^2\right] \le C_1\int_0^\infty \mathscr{E}_N^\theta (P_{s}^NG)\diff s = \frac{C_1}{2}\left( \| G\|_N^2 - \lim_{t\to\infty}\| P_t^NG\|_N^2\right) = \frac{C_1}{2}\| G\|_N^2.
\end{equation*}
This proves the stationary variance estimate \eqref{eq:stationary_variance_estimate}, and therefore concludes the proof of the hydrostatic limit \cref{thm:hydrostatic_limit}. \hfill\qedsymbol

\subsection{Proof of the stationary fluctuations}

We now turn to the proof of the stationary fluctuations \cref{thm:stationary_fluctuations,thm:stationary_fluctuations_neumann}. Notice that the hydrostatic limit \cref{thm:hydrostatic_limit} implies that the stationary measure satisfies assumption \ref{eq:association_initial_distribution}. Moreover, in the proof of the hydrostatic limit, we have shown that the stationary variance estimate \eqref{eq:stationary_variance_estimate} holds, thus the stationary measure also satisfies assumption \ref{eq:assumption_variance_estimate}. Then, we can apply \cref{thm:dynamical_fluctuations} to get that the stationary fluctuation field converges in distribution, along a subsequence, to a process $\mathcal{Y}_\mathrm{ss}$ that satisfies the identity
\begin{equation}\label{eq:decomposition_limit_stationary_field}
    \mathcal{Y}_\mathrm{ss}(G) = \mathcal{Y}_0(T_t^\theta G) + \int_0^t \diff M_s(T_{t-s}^\theta G)\quad \mbox{ in distribution}
\end{equation}
for any $t\ge 0$ and any $G\in\mathcal{S}_\theta$. Above, $\mathcal{Y}_0$ is distributed like $\mathcal{Y}_{ss}$ and the second term on the right-hand side of this equality is a mean-zero Gaussian variable of variance
\begin{equation*}
    \int_0^t \| \nabla_\theta T_{t-s}^\theta G\|_{\theta ,\mathbf{q}_\mathrm{ss}}^2\diff s
\end{equation*}
which is independent of $\mathcal{Y}_0$. Thanks to \eqref{eq:decomposition_limit_stationary_field}, we have a decomposition for the variance of the stationary fluctuation field under the form
\begin{equation}
    \E\left[ \big|\mathcal{Y}_\mathrm{ss}(G)\big|^2\right] = \E\left[ \big|\mathcal{Y}_\mathrm{ss}(T_t^\theta G)\big|^2\right] + \int_0^t \| \nabla_\theta T_{t-s}^\theta G\|_{\theta ,\mathbf{q}_\mathrm{ss}}^2\diff s.
\end{equation}
We are going to examine the asymptotic behaviour of this identity as $t$ goes to infinity, and show that we recover the formulas given in \cref{thm:stationary_fluctuations}. 

\subsubsection{Dirichlet case}

Assume $0\le\theta <1$. We start by proving that the first term on the right-hand side of the previous identity vanishes as $t$ goes to infinity, so that all the variance of the stationary fluctuation field is given by the martingale part.

\medskip

Fix $R>0$. Thanks to the convergence in distribution and to the variance estimate \eqref{eq:stationary_variance_estimate}, we have that
\begin{equation*}
    \E \left[ \big|\mathcal{Y}_\mathrm{ss}(G)\big|^2\wedge R\right] = \lim_{N\to\infty} \E_{\mu_\mathrm{ss}^N}\left[ \big|\mathcal{Y}_\mathrm{ss}^N(G)\big|^2\wedge R\right]\le C \lim_{N\to\infty} \| G\|_N^2 = C\| G\|_{L^2}^2.
\end{equation*}
Letting $R$ go to infinity and by the monotone convergence theorem, we get that the stationary variance estimate passes to the limit \textit{i.e.}
\begin{equation*}
    \E \left[ \big|\mathcal{Y}_\mathrm{ss}(G)\big|^2\right] \le  C\| G\|_{L^2}^2
\end{equation*}
for any $G\in\mathcal{S}_\theta$. In particular, we have that
\begin{equation*}
    \E \left[ \big|\mathcal{Y}_\mathrm{ss}(T_t^\theta G)\big|^2\right] \le  C\| T_t^\theta G\|_{L^2}^2 
\end{equation*}
and the right-hand side converges exponentially fast to zero as $t$ goes to infinity thanks to \cref{prop:heat_semigroup_properties}. At this stage, we get that $\mathcal{Y}_\mathrm{ss}(G)$ is a mean-zero Gaussian variable whose variance is given by the asymptotic variance of the martingale part. We want to compute the asymptotic behaviour as $t$ goes to infinity of the quantity
\begin{equation}\label{eq:variance_tostudy_dirichlet}
    2D\int_0^t \int_0^1 \nabla_\theta T_s^\theta G(u)^\dagger \chi(\mathbf{q}_\mathrm{ss}(u))\nabla_\theta T_s^\theta G(u)\diff u\diff s.
\end{equation}
For simplicity, write $F_t=T_t^\theta G$, and let 
\begin{equation}\label{eq:def_phi}
    \phi (t) \coloneq \int_0^1 F_t(u)^\dagger \chi(\mathbf{q}_\mathrm{ss}(u)) F_t(u)\diff u.
\end{equation}
Differentiating $\phi$ with respect to time, and performing an integration by parts recalling that $F_t$ vanishes at the boundaries, we get that
\begin{align*}
    \phi '(t) & = 2\int_0^1 \partial_tF_t(u)^\dagger \chi (\mathbf{q}_\mathrm{ss}(u)) F_t(u)\diff u \\
    & = 2D\int_0^1 \Delta_\theta F_t(u)^\dagger \chi (\mathbf{q}_\mathrm{ss}(u)) F_t(u)\diff u \\
    & = -2D\int_0^1 \nabla_\theta F_t(u)^\dagger \partial_u\big( \chi (\mathbf{q}_\mathrm{ss}(u))\big) F_t(u) \diff u - 2D\int_0^1 \nabla_\theta F_t(u)^\dagger \chi (\mathbf{q}_\mathrm{ss}(u))\nabla_\theta F_t(u)\diff u
\end{align*}
Notice that
\begin{equation*}
    2\nabla_\theta F_t^\dagger \partial_u\big(\chi (\mathbf{q}_t)\big)F_t = \partial_u \Big( F_t^\dagger \partial_u\big(\chi (\mathbf{q}_t)\big)F_t\Big) - F_t^\dagger \partial_u^2\big(\chi (\mathbf{q}_t)\big)F_t
\end{equation*}
so, using once again that $F_t$ vanishes at the boundaries, we get that
\begin{equation*}
    \phi '(t) = -2D\int_0^1 \nabla_\theta F_t(u)^\dagger \chi (\mathbf{q}_\mathrm{ss}(u))\nabla_\theta F_t(u)\diff u + D\int_0^1 F_t(u)^\dagger \partial_u^2\big(\chi (\mathbf{q}_\mathrm{ss}(u))\big)F_t(u)\diff u.
\end{equation*}
Hence, we have that the limit of the quantity \eqref{eq:variance_tostudy_dirichlet} writes as
\begin{multline*}
    2D\int_0^\infty\int_0^1\nabla_\theta T_s^\theta G(u)^\dagger \chi (\mathbf{q}_\mathrm{ss}(u))\nabla_\theta T_s^\theta G(u)\diff u\diff s \\ = D \int_0^\infty \int_0^1 T_s^\theta G(u)^\dagger \partial_u^2\big(\chi (\mathbf{q}_\mathrm{ss}(u))\big)T_s^\theta G(u)\diff u\diff s
    + \phi (0)-\lim_{t\to\infty} \phi (t).
\end{multline*}
Thanks to \cref{prop:heat_semigroup_properties}, $\phi (t)$ converges exponentially fast to zero as $t$ goes to infinity. Plugging in the value of $\phi (0)$, we just proved the variance is equal to
\begin{equation*}
    \E \left[ \big|\mathcal{Y}_\mathrm{ss}(G)\big|^2\right] = \int_0^1 G(u)^\dagger \chi (\mathbf{q}_\mathrm{ss}(u))G(u)\diff u + D\int_0^\infty \int_0^1 T_s^\theta G(u)^\dagger \partial_u^2\big(\chi (\mathbf{q}_\mathrm{ss}(u))\big)T_s^\theta G(u)\diff u\diff s
\end{equation*}
as desired. Then, we obtain \eqref{eq:covariance_dirichlet_stationary} by polarization. This concludes the proof of the Dirichlet case.

\subsubsection{Robin case} Assume $\theta =1$. It also holds that $T_t^\theta G$ converges exponentially fast to zero as $t$ goes to infinity, so using the same arguments as in the Dirichlet case, we get that $\mathcal{Y}_\mathrm{ss}(G)$ is a mean-zero Gaussian variable whose variance is given by the asymptotic variance of the martingale part. Therefore, we want to compute the asymptotic behaviour as $t$ goes to infinity of the quantity
\begin{multline}\label{eq:variance_tostudy_robin}
    2D\int_0^t \int_0^1 \nabla_\theta T_s^\theta G(u)^\dagger \chi (\mathbf{q}_\mathrm{ss}(u))\nabla_\theta T_s^\theta G(u)\diff u\diff s \\ + \int_0^t \Big( T_s^\theta G(0)^\dagger \Sigma_\ell (\mathbf{q}_\mathrm{ss}(0))T_s^\theta G(0) + T_s^\theta G(1)^\dagger \Sigma_r (\mathbf{q}_\mathrm{ss}(1))T_s^\theta G(1)\Big)\diff s.
\end{multline}
As before, set $F_t=T_t^\theta G$, and $\phi (t)$ as in \eqref{eq:def_phi}. Now, the function $F_t$ satisfies Robin boundary conditions, so the integration by parts will produce boundary terms in the expression of $\phi '(t)$. Namely, we get that
\begin{multline*}
    \phi' (t) = -2D\int_0^1\nabla_\theta F_t(u)^\dagger \chi (\mathbf{q}_\mathrm{ss}(u))\nabla_\theta F_t(u)\diff u + D\int_0^1 F_t(u)^\dagger \partial_u^2\big(\chi (\mathbf{q}_\mathrm{ss}(u))\big)F_t(u)\diff u\\
    - 2D\Big( F_t(1)^\dagger \chi (\mathbf{q}_\mathrm{ss}(1))F_t(1) + F_t(0)^\dagger \chi (\mathbf{q}_\mathrm{ss}(0))F_t(0)\Big) \\
    - D\Big( F_t(1)^\dagger \partial_u (\chi (\mathbf{q}_\mathrm{ss}))(1)F_t(1) - F_t(0)^\dagger \partial_u(\chi (\mathbf{q}_\mathrm{ss}))(0)F_t(0)\Big).
\end{multline*}
Therefore, letting 
\begin{align}
    & \Xi_\ell \coloneq \Sigma_\ell (\mathbf{q}_\mathrm{ss}(0)) - 2D\chi (\mathbf{q}_\mathrm{ss}(0)) + D\partial_u(\chi (\mathbf{q}_\mathrm{ss}))(0) , \label{eq:preXil}  \\
    &\Xi_r \coloneq \Sigma_r (\mathbf{q}_\mathrm{ss}(1))  - 2D\chi (\mathbf{q}_\mathrm{ss}(1)) - D\partial_u(\chi (\mathbf{q}_\mathrm{ss}))(1) , \label{eq:preXir}
\end{align}
we get that the limit at $t$ goes to infinity of the quantity \eqref{eq:variance_tostudy_robin} is equal to
\begin{multline*}
    D \int_0^\infty \int_0^1 T_s^\theta G(u)^\dagger \partial_u^2\big(\chi (\mathbf{q}_\mathrm{ss}(u))\big)T_s^\theta G(u)\diff u\diff s
    + \phi (0)-\lim_{t\to\infty} \phi (t) \\
    + \int_0^\infty \Big( T_s^\theta G(1)^\dagger \Xi_r T_s^\theta G(1) + T_s^\theta G(0)^\dagger \Xi_\ell T_s^\theta G(0)\Big)\diff s
\end{multline*}
We know that $\phi (t)$ vanishes at infinity, and plugging in the value of $\phi (0)$, we get the desired variance formula. The result follows by polarization. 

\medskip

In \cref{appendix:robin_boundary_matrices}, we check that we recover exactly the expressions of $\Xi_\ell$ and $\Xi_r$ given in \eqref{eq:Xil} and \eqref{eq:Xir} from the definitions \eqref{eq:preXil} and \eqref{eq:preXir} after some tedious computations using the explicit expression \eqref{eq:stationary_solution} of the stationary solution $\mathbf{q}_\mathrm{ss}$. This concludes the proof in the Robin case and thus the proof of \cref{thm:stationary_fluctuations}. \hfill\qedsymbol

\subsubsection{Neumann case} Assume $\theta >1$. Recall the decomposition \eqref{eq:decomposition_limit_stationary_field} of the stationary fluctuation as a sum of two independent terms, valid for any $t\ge 0$. By \cref{prop:heat_semigroup_properties}, we know that for any~$G\in\mathcal{S}_\theta$, the semigroup $T_t^\theta G$ converges exponentially fast to $\bar{G}$ as $t$ goes to infinity. Therefore, using the variance estimate for the stationary fluctuation field and thanks to \cref{prop:heat_semigroup_properties}, we have that
\begin{equation*}
    \E\left[ \big|\mathcal{Y}_\mathrm{ss}(T_t^\theta G)-\mathcal{Y}_\mathrm{ss}(\bar{G})\big|^2\right] \le C\| T_t^\theta G-\bar{G}\|_{L^2}^2 \xrightarrow[t\to\infty]{} 0
\end{equation*}
so that the first term on the right-hand side of the decomposition converges to $\mathcal{Y}_\mathrm{ss}(\bar{G})$ in $L^2$. This term is nothing but $\bar{G}^\dagger Z$ where $Z$ is the limit in distribution of the sequence of total mass fluctuations
\begin{equation*}
    \frac{1}{\sqrt{N}}\sum_{x\in\Lambda_N} \big( \mathbf{Q}_x-\mathbf{m}_\mathrm{ss}^N(x)\big).
\end{equation*}
We have no information on the law of this term, except that it is independent of the second term of the decomposition.

\medskip

On the other hand, the second term on the right-hand side of the decomposition is a centered Gaussian variable whose variance is given by the asymptotic behaviour of 
\begin{equation}\label{eq:variance_tostudy_neumann}
    2D\int_0^t \int_0^1 \nabla_\theta T_s^\theta G(u)^\dagger \chi (\bar{\mathbf{q}})\nabla_\theta T_s^\theta G(u)\diff u\diff s = 2D\int_0^t \int_0^1 \nabla_\theta T_s^\theta G^\circ(u)^\dagger \chi (\bar{\mathbf{q}})\nabla_\theta T_s^\theta G^\circ(u)\diff u\diff s .
\end{equation}
This equality comes from the fact that $\mathbf{q}_\mathrm{ss}$ is constant equal to $\bar{\mathbf{q}}$ in this case, and 
\begin{equation*}
T_t^\theta G = T_t^\theta (\bar{G}+G^\circ) = \bar{G}+T_t^\theta G^\circ\qquad\mbox{ hence }\qquad\nabla_\theta T_t^\theta G = \nabla_\theta T_t^\theta G^\circ.
\end{equation*} 
Define $F_t=T_t^\theta G^\circ$ and $\phi (t)$ as in \eqref{eq:def_phi}. Then, differentiating $\phi$ with respect to time, and performing an integration by parts relying on the boundary conditions satisfied by $F_t$, we get that
\begin{equation*}
    \phi '(t) = -2D\int_0^1 \nabla_\theta F_t(u)^\dagger \chi (\bar{\mathbf{q}})\nabla_\theta F_t(u)\diff u.
\end{equation*}
Therefore, we have that the limit of the quantity \eqref{eq:variance_tostudy_neumann} writes as
\begin{equation*}
   2D\int_0^\infty \int_0^1 \nabla_\theta T_s^\theta G^\circ(u)^\dagger \chi (\bar{\mathbf{q}})\nabla_\theta T_s^\theta G^\circ(u)\diff u\diff s  = \phi (0)-\lim_{t\to\infty} \phi (t) = \int_0^1 G^\circ(u)^\dagger \chi (\bar{\mathbf{q}})G^\circ(u)\diff u.
\end{equation*}
This concludes the proof of the Neumann case, and therefore the proof of \cref{thm:stationary_fluctuations_neumann}. \hfill\qedsymbol

\section{Local equilibrium property}
\label{sec:local_equilibrium}

This section is devoted to the proof of the local equilibrium property \cref{prop:local_equilibrium}. This result is stronger than usual replacement lemmas because there is no space averaging in the statement, and allows rather a uniform replacement over space. The proof is based on the classical \emph{one-block} and \emph{two-blocks} estimates, and for this we adapt the strategy developed in \cite{da_cunha_hydrodynamic_2026} to prove such stronger replacement for our present model. The present case is actually slightly simpler than the one treated in \cite{da_cunha_hydrodynamic_2026} because the grand-canonical measures are product measures. Crucial ingredients are the spectral gap estimate and the equivalence of ensembles for the grand-canonical measures that were established in \cite{da_cunha_stationary_2026}.

\medskip

To be more precise, we decompose the proof of the replacement of $\tau_xf$ by $\Psi_f(\mathbf{Q}_x^{\varepsilon N} )$ into two steps:
\begin{itemize}
    \item First, we introduce a new scaling parameter $\ell$ that plays the role of an intermediary between microscopic and macroscopic scales. We prove that the replacement of $\tau_xf$ by~$\Psi_f(\mathbf{Q}_x^\ell)$ holds over large microscopic boxes, that is of size $\ell$ independent of $N$. This is the aim of the \emph{one-block estimate} \cref{lemma:one_block}.
    \item Then, we prove that the averaged densities $\mathbf{Q}_x^\ell$ over large microscopic boxes is close to the averaged densities $\mathbf{Q}_x^{\varepsilon N}$ over small macroscopic boxes. This is the content of the \emph{two-blocks estimate} \cref{lemma:two_blocks}. As the function $\Psi_f$ is continuous on the compact set~$\mathcal{T}_\kappa$, it is uniformly continuous and therefore the replacement of $\Psi_f(\mathbf{Q}_x^\ell )$ by $\Psi_f(\mathbf{Q}_x^{\varepsilon N})$ holds.
\end{itemize}
This is the strategy we implement in the remaining of this section, but before starting the proof, we need to introduce some useful tools.

\subsection{Reference measure and Dirichlet forms}

First of all, we introduce a \emph{reference measure} that is meant to approximate the stationary state $\mu_\mathrm{ss}^N$, and we construct it in such a way that it is reversible with respect to the boundary generators $\mathscr{L}_\ell$ and $\mathscr{L}_r$. Introduce the affine interpolation of the density profiles
\begin{equation*}
    \bar{\mathbf{q}}_x = \begin{pmatrix}
    \bar{\rho}_x \\
    \bar{\mathcal{E}}_x
    \end{pmatrix} \coloneq (\mathbf{q}_r-\mathbf{q}_\ell)\frac{x-1}{N-2} + \mathbf{q}_\ell
\end{equation*}
for any $x\in\Lambda_N$. Notice that $\bar{\mathbf{q}}_x\in\mathring{\mathcal{T}}_\kappa$ for any $x\in\Lambda_N$ as we assumed the boundary parameters~$\mathbf{q}_\ell$ and $\mathbf{q}_r$ to be in the interior of the triangle $\mathcal{T}_\kappa$. By construction, it satisfies the boundary conditions~$\bar{\mathbf{q}}_1=\mathbf{q}_\ell$ and $\bar{\mathbf{q}}_{N-1}=\mathbf{q}_r$.

\medskip

Then, we define the reference measure $\bar{\nu}^N$ as the product measure on $\Omega_N$ given by
\begin{equation*}
    \bar{\nu}^N = \bigotimes_{x\in\Lambda_N} \nu_{\bar{\mathbf{q}}_x}.
\end{equation*}
where we recall that the marginal $\nu_{\bar{\mathbf{q}}_x}=\nu_{\bar{\rho}_x,\bar{\mathcal{E}}_x}$ has been defined in \cref{defin:invariant_measures}. We start proving some results about this reference measure.

\begin{lemma}\label{lemma:reference_measure_lower_bound}
    We have that
    \begin{equation*}
        \inf_{\eta\in\Omega_N} \bar{\nu}^N(\eta )\ge c^{N-1}
    \end{equation*}
    where $c>0$ is a constant that depends only on the boundary parameters $\mathbf{q}_\ell$, $\mathbf{q}_r$ and $\kappa$. 
\end{lemma}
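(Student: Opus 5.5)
Since $\bar{\nu}^N$ is a product measure, for any $\eta\in\Omega_N$ we have $\bar{\nu}^N(\eta)=\prod_{x\in\Lambda_N}\nu_{\bar{\mathbf{q}}_x}(\eta_x)$, and $|\Lambda_N|=N-1$. It therefore suffices to find a constant $c>0$, depending only on $\mathbf{q}_\ell,\mathbf{q}_r,\kappa$, such that
\begin{equation*}
    \nu_{\bar{\mathbf{q}}_x}(\eta_x=j)\ge c\qquad\mbox{ for all } x\in\Lambda_N \mbox{ and all } j\in\{0,1,\hdots,\kappa\}.
\end{equation*}
The plan is to show that all the parameters $\bar{\mathbf{q}}_x$ lie in a fixed compact subset of $\mathring{\mathcal{T}}_\kappa$ that does not depend on $N$, and then to use continuity of the single-site marginal on that compact set.

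First, every $\bar{\mathbf{q}}_x$ is a convex combination of $\mathbf{q}_\ell$ and $\mathbf{q}_r$, because $\frac{x-1}{N-2}\in[0,1]$ for $x\in\Lambda_N$. So $\bar{\mathbf{q}}_x$ lies on the segment $K=[\mathbf{q}_\ell,\mathbf{q}_r]$. This segment is compact, and it is contained in $\mathring{\mathcal{T}}_\kappa$ because the interior of a convex set is convex and both endpoints are interior points.

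Second, for $(\rho,\mathcal{E})\in\mathring{\mathcal{T}}_\kappa$ we have $0<\rho<1$ and $\rho<\mathcal{E}<\kappa\rho$, so $\mathfrak{p}(\rho,\mathcal{E})\in(0,1)$. Hence each atom of the marginal is strictly positive:
\begin{equation*}
    \nu_{\rho,\mathcal{E}}(\eta_x=0)=1-\rho>0,\qquad \nu_{\rho,\mathcal{E}}(\eta_x=j)=\rho\binom{\kappa-1}{j-1}\mathfrak{p}^{j-1}(1-\mathfrak{p})^{\kappa-j}>0 \quad\mbox{ for } 1\le j\le\kappa .
\end{equation*}
These $\kappa+1$ functions are continuous on $\mathring{\mathcal{T}}_\kappa$, since $\mathfrak{p}$ is continuous there as $\rho>0$. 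The continuous, strictly positive function $(\rho,\mathcal{E})\mapsto\min_j\nu_{\rho,\mathcal{E}}(\eta_x=j)$ therefore attains a positive minimum $c$ on the compact set $K$, and this $c$ depends only on $\mathbf{q}_\ell$, $\mathbf{q}_r$ and $\kappa$.

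Multiplying over the $N-1$ sites then gives $\bar{\nu}^N(\eta)\ge c^{N-1}$ uniformly in $\eta$. No step here is a real obstacle. The only point needing care is that the bound on $\bar{\mathbf{q}}_x$ is uniform in $N$, and this is guaranteed because $K$ does not depend on $N$. One could instead write an explicit $c$ in terms of the minima over $K$ of $\rho$, $1-\rho$, $\mathfrak{p}$ and $1-\mathfrak{p}$, but the compactness argument is enough.
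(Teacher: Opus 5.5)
Your proof is correct and has the same structure as the paper's. You use the product form of $\bar{\nu}^N$ to reduce to a lower bound on every atom $\nu_{\bar{\mathbf{q}}_x}(j)$, uniform in $x$ and $N$, and then raise it to the power $N-1$. The only difference is how the uniform bound is obtained. The paper bounds $\bar{\rho}_x$ explicitly between $\rho_\ell\wedge\rho_r$ and $\rho_\ell\vee\rho_r$, and $\mathfrak{p}(\bar{\mathbf{q}}_x)$ between $\mathfrak{p}_\ell\wedge\mathfrak{p}_r$ and $\mathfrak{p}_\ell\vee\mathfrak{p}_r$, which gives an explicit $c$ but silently uses that $\bar{\mathcal{E}}_x/\bar{\rho}_x$ is a mediant of $\mathcal{E}_\ell/\rho_\ell$ and $\mathcal{E}_r/\rho_r$. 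Your compactness argument on the segment $[\mathbf{q}_\ell,\mathbf{q}_r]\subset\mathring{\mathcal{T}}_\kappa$ gives a non-explicit $c$ but needs no such check.
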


\begin{proof}
    Notice that for any $x\in\Lambda_N$, we have
    \begin{equation*}
        0<\rho_\ell\wedge\rho_r \le \bar{\rho}_x \le \rho_\ell\vee\rho_r <1\qquad\mbox{ and }\qquad 0<\mathcal{E}_\ell\wedge\mathcal{E}_r \le \bar{\mathcal{E}}_x \le \mathcal{E}_\ell\vee\mathcal{E}_r < \kappa
    \end{equation*}
    hence
    \begin{equation*}
        0<\mathfrak{p}_- \coloneq \mathfrak{p}_\ell\wedge\mathfrak{p}_r\le \mathfrak{p}(\bar{\rho}_x,\bar{\mathcal{E}}_x) \le \mathfrak{p}_+\coloneq \mathfrak{p}_\ell\vee\mathfrak{p}_r<1.
    \end{equation*}
    Notice that $\nu_{\bar{\mathbf{q}}_x}(0) = 1-\bar{\rho}_x \ge 1-\rho_\ell\vee\rho_r$, and for any $j\in \{1,\hdots ,\kappa\}$, we have
    \begin{equation*}
        \nu_{\bar{\mathbf{q}}_x}(j) = \rho {{\kappa -1}\choose{j-1}}\mathfrak{p}(\bar{\mathbf{q}}_x)^{j-1}(1-\mathfrak{p}(\bar{\mathbf{q}}_x))^{\kappa-j} \ge (\rho_\ell\wedge\rho_r) \mathfrak{p}_-^{j-1}(1-\mathfrak{p}_+)^{\kappa-j}.
    \end{equation*}
    This yields a constant $c>0$ such that $\nu_{\bar{\mathbf{q}}_x}(j)\ge c$ for any $j\in\{0,\hdots ,\kappa\}$ and any $x\in\Lambda_N$. Then
    \begin{equation*}
        \bar{\nu}^N(\eta )= \prod_{x\in\Lambda_N} \nu_{\bar{\mathbf{q}}_x}(\eta_x) \ge c^{N-1}
    \end{equation*}
    and the result is proved.
\end{proof}

\begin{lemma}[Quasi-reversibility]\label{lemma:quasi_reversibility}
    The measure $\bar{\nu}^N$ is reversible with respect to the left boundary generators $\mathscr{L}_\ell$ and $\mathscr{L}_r$. In the bulk, it is ``\emph{quasi-reversible}'' in the sense that there exists a constant $C>0$ such that for any $x\in \{1,\hdots ,N-2\}$, we have the estimates
    \begin{equation*}
        \sup_{\eta\in\Omega_N} \left| \frac{\bar{\nu}^N(\eta^{x,x+1})}{\bar{\nu}^N(\eta )}-1\right| \le \frac{C}{N},
    \end{equation*}
    and also, for $|x-y|=1$,
    \begin{equation*}
        \sup_{\substack{\eta\in\Omega_N \\ c_{x\to y}^e(\eta )>0}} \left| \frac{c_{y\to x}^e(\eta^{x\to y})\bar{\nu}^N(\eta^{x\to y})}{c_{x\to y}^e(\eta)\bar{\nu}^N(\eta)} - 1\right| \le \frac{C}{N}.
    \end{equation*}
\end{lemma}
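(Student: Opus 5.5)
The plan is to compute everything explicitly from the product structure of $\bar{\nu}^N$. Ratios of the form $\bar{\nu}^N(\eta')/\bar{\nu}^N(\eta)$ only involve the marginals at the one or two sites where $\eta'$ and $\eta$ differ.

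\emph{Boundary reversibility.} The generator $\mathscr{L}_\ell$ only touches site $1$, and $\bar{\mathbf{q}}_1=\mathbf{q}_\ell$. It therefore suffices to check detailed balance for the one-site chain on $\{0,\dots,\kappa\}$ with respect to $\nu_{\rho_\ell,\mathcal{E}_\ell}$. There are two kinds of transitions.
\begin{itemize}
\item \emph{Creation/annihilation.} For $0\leftrightarrow j$ we need
\begin{equation*}
(1-\rho_\ell)(\kappa-1)\rho_\ell\binom{\kappa-1}{j-1}\mathfrak{p}_\ell^{j-1}(1-\mathfrak{p}_\ell)^{\kappa-j}=\nu_{\rho_\ell,\mathcal{E}_\ell}(j)(\kappa-1)(1-\rho_\ell),
\end{equation*}
which holds by the definition of $\nu_{\rho_\ell,\mathcal{E}_\ell}(j)$.
\item \emph{Energy moves.} For $j\to j+1$ with $j\ge1$ we need
\begin{equation*}
\nu(j)(\mathcal{E}_\ell-\rho_\ell)(\kappa-j)=\nu(j+1)(\kappa\rho_\ell-\mathcal{E}_\ell)j.
\end{equation*}
The binomial ratio is $\nu(j+1)/\nu(j)=\frac{\kappa-j}{j}\frac{\mathfrak{p}_\ell}{1-\mathfrak{p}_\ell}$. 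A direct computation gives $\mathfrak{p}_\ell/(1-\mathfrak{p}_\ell)=(\mathcal{E}_\ell-\rho_\ell)/(\kappa\rho_\ell-\mathcal{E}_\ell)$, and the identity follows.
\end{itemize}
The right boundary is identical, using $\bar{\mathbf{q}}_{N-1}=\mathbf{q}_r$.

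\emph{Bulk particle exchange.} By the product structure,
\begin{equation*}
\frac{\bar{\nu}^N(\eta^{x,x+1})}{\bar{\nu}^N(\eta)}=\frac{\nu_{\bar{\mathbf{q}}_x}(\eta_{x+1})\,\nu_{\bar{\mathbf{q}}_{x+1}}(\eta_x)}{\nu_{\bar{\mathbf{q}}_x}(\eta_x)\,\nu_{\bar{\mathbf{q}}_{x+1}}(\eta_{x+1})}.
\end{equation*}
The map $\mathbf{q}\mapsto\nu_{\mathbf{q}}(j)$ is smooth on a compact neighbourhood $K\subset\mathring{\mathcal{T}}_\kappa$ of the segment $[\mathbf{q}_\ell,\mathbf{q}_r]$. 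It is also bounded below there by the constant $c$ of \cref{lemma:reference_measure_lower_bound}, so $\log\nu_{\mathbf{q}}(j)$ is Lipschitz on $K$, uniformly in $j\in\{0,\dots,\kappa\}$. Since $|\bar{\mathbf{q}}_{x+1}-\bar{\mathbf{q}}_x|=|\mathbf{q}_r-\mathbf{q}_\ell|/(N-2)$, the log of the ratio is bounded by $2L|\mathbf{q}_r-\mathbf{q}_\ell|/(N-2)$. Exponentiating gives $|\text{ratio}-1|\le C/N$.

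\emph{Bulk energy exchange.} Write $a=\eta_x$ and $b=\eta_y$ with $a\ge2$ and $b\le\kappa-1$, both particles present. Then
\begin{equation*}
\frac{c^e_{y\to x}(\eta^{x\to y})}{c^e_{x\to y}(\eta)}=\frac{b(\kappa-a+1)}{(a-1)(\kappa-b)}.
\end{equation*}
The measure ratio is $\nu_{\bar{\mathbf{q}}_x}(a-1)\nu_{\bar{\mathbf{q}}_y}(b+1)/(\nu_{\bar{\mathbf{q}}_x}(a)\nu_{\bar{\mathbf{q}}_y}(b))$. By the binomial ratio, this equals
\begin{equation*}
\frac{a-1}{\kappa-a+1}\,\frac{\kappa-b}{b}\,\frac{r_y}{r_x},\qquad r_z\coloneq\frac{\mathfrak{p}(\bar{\mathbf{q}}_z)}{1-\mathfrak{p}(\bar{\mathbf{q}}_z)}.
\end{equation*}
The combinatorial factors cancel exactly, so the full product reduces to $r_y/r_x$. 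This is exactly the structure that makes $\nu_{\rho,\mathcal{E}}$ reversible for $\mathscr{L}_0$. Since $\log r$ is Lipschitz on $K$ (because $\mathfrak{p}$ stays in $[\mathfrak{p}_-,\mathfrak{p}_+]\subset(0,1)$), we get $|r_y/r_x-1|\le C/N$.

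The main obstacle is mild. It consists in noticing the exact cancellation of the rate factors in the energy case, and in ensuring that all constants are uniform in $x$ and $\eta$. Both follow from the interpolation staying inside a fixed compact subset of the interior triangle, as already used in \cref{lemma:reference_measure_lower_bound}.
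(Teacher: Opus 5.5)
Your proof is correct and follows the same route as the paper. The boundary reversibility comes from detailed balance for the one-site marginals, using $\bar{\mathbf{q}}_1=\mathbf{q}_\ell$ and $\bar{\mathbf{q}}_{N-1}=\mathbf{q}_r$, and the bulk estimates come from the product structure of $\bar{\nu}^N$ together with the $O(1/N)$ increments of the linear interpolation; you simply carry out the computations the paper omits, notably the exact cancellation of the rate factors that reduces the energy-exchange ratio to $r_y/r_x$.
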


\begin{proof}
    The respective marginals of $\bar{\nu}^N$ on $x=1$ and $x=N-1$ are equal to $\nu_{\mathbf{q}_\ell}$ and $\nu_{\mathbf{q}_r}$, so the reversibility is quite immediate by checking the detailed balance condition for all the possible transitions at the boundaries.

    \medskip

    The other two estimates are consequences of the fact that the reference measure $\bar{\nu}^N$ is a product measure, and of the equalities
    \begin{equation}\label{eq:lipschitz_interpolation}
        |\bar{\rho}_{x+1}-\bar{\rho}_x| = \frac{|\rho_r-\rho_\ell|}{N-2}\qquad\mbox{ and }\qquad |\bar{\mathcal{E}}_{x+1}-\bar{\mathcal{E}}_x| = \frac{|\mathcal{E}_r-\mathcal{E}_\ell|}{N-2}.
    \end{equation}
    We do not give the details of the computations here, but they are similar to ones performed for instance in \cite{bernardin2019slow}.
\end{proof}

\begin{definition}[Relative entropy]
    Given two probability measures $\mu$ and $\nu$ on $\Omega_N$, we define their \emph{relative entropy} by
    \begin{equation*}
        H(\mu |\nu ) = \int_{\Omega_N} \log\left( \frac{\mu (\eta )}{\nu (\eta )}\right)\diff \mu (\eta ) = \E_\nu\left[ \frac{\diff\mu}{\diff\nu}\log\left(\frac{\diff\mu}{\diff\nu}\right)\right].
    \end{equation*}
    We refer to \cite[Appendix 1.1.8]{kipnis_scaling_1999} for some properties of the relative entropy.
\end{definition} 

\noindent Then, the reference measure $\bar{\nu}^N$ satisfies the following entropy estimate.

\begin{lemma}[Entropy estimate]\label{lemma:entropy_estimate}
    There exists a constant $C=C(\mathbf{q}_\ell ,\mathbf{q}_r,\kappa )>0$ such that for any probability measure $\mu$ on $\Omega_N$, we have
    \begin{equation}
        H(\mu |\bar{\nu}^N) \le C N.
    \end{equation}
\end{lemma}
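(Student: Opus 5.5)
The entropy estimate follows directly from the uniform lower bound on the reference measure in \cref{lemma:reference_measure_lower_bound}. Since $\Omega_N$ is finite, the relative entropy is the finite sum
\begin{equation*}
    H(\mu |\bar{\nu}^N) = \sum_{\eta\in\Omega_N} \mu (\eta )\log \mu (\eta ) - \sum_{\eta\in\Omega_N}\mu (\eta )\log\bar{\nu}^N(\eta ).
\end{equation*}
I would bound the two sums separately.

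For the first sum, each term satisfies $\mu(\eta)\le 1$, so $\mu(\eta)\log\mu(\eta)\le 0$, with the convention $0\log 0=0$. Hence the first sum is nonpositive.

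For the second sum, \cref{lemma:reference_measure_lower_bound} gives $\bar{\nu}^N(\eta)\ge c^{N-1}$ for every $\eta$, with $c\in(0,1)$ depending only on $\mathbf{q}_\ell$, $\mathbf{q}_r$ and $\kappa$. Therefore $-\log\bar{\nu}^N(\eta)\le (N-1)\log(1/c)$. Since $\mu$ is a probability measure, the second sum is at most $(N-1)\log(1/c)$.

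Combining the two bounds gives $H(\mu|\bar{\nu}^N)\le (N-1)\log(1/c)\le CN$ with $C=\log(1/c)$. This constant depends only on $\mathbf{q}_\ell$, $\mathbf{q}_r$ and $\kappa$, and it is uniform in $\mu$ and $N$.

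There is no real obstacle: the only substantive input is the uniform positivity of the one-site marginals $\nu_{\bar{\mathbf{q}}_x}$. That positivity was already secured in \cref{lemma:reference_measure_lower_bound} by using that $\mathbf{q}_\ell$ and $\mathbf{q}_r$ lie in the interior $\mathring{\mathcal{T}}_\kappa$. This makes $\bar\rho_x$ and $\mathfrak{p}(\bar{\mathbf{q}}_x)$ bounded away from $0$ and $1$ uniformly in $x$.
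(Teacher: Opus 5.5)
Your proof is correct and is essentially the paper's argument. Both combine the uniform lower bound $\bar{\nu}^N(\eta)\ge c^{N-1}$ from \cref{lemma:reference_measure_lower_bound} with $\mu(\eta)\le 1$ to get $\log\big(\mu(\eta)/\bar{\nu}^N(\eta)\big)\le (N-1)\log(1/c)$ pointwise, then integrate against $\mu$. You simply split this into the two sums explicitly.
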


\begin{proof}
    Let $\mu$ be any probability measure on $\Omega_N$. As $\bar{\nu}^N(\eta )\ge c^{N-1}$ for any $\eta\in\Omega_N$ by \cref{lemma:reference_measure_lower_bound}, and $\mu (\eta )\le 1$, we have that $\log (\frac{\diff\mu}{\diff\bar{\nu}^N}) \le \log (\frac1c)(N-1)$. This yields the desired estimate.
\end{proof}

\noindent Finally, we introduce the Dirichlet forms associated to the dynamics of the model.

\begin{definition}[Dirichlet forms]
    Let $\mu$ be any probability measure on $\Omega_N$. We define the \emph{Dirichlet form} of a non-negative local function $f:\Omega_N\longrightarrow\R_+$ with respect to the measure $\mu$ by
    \begin{equation*}
        \mathfrak{D}_N(f;\mu) = \sum_{x=1}^{N-2} \mathfrak{D}_{x,x+1}^p(f ;\mu ) + \sum_{x=1}^{N-2}\sum_{|y-x|=1}\mathfrak{D}_{x\to y}^e(f;\mu ) + N^{-\theta}\mathfrak{D}_\ell(f;\mu) + N^{-\theta}\mathfrak{D}_r(f;\mu)
    \end{equation*}
    where
    \begin{subequations}
        \begin{align}
            & \mathfrak{D}_{x,y}^p (f;\mu ) = \int_{\Omega_N}c_{x,y}^p (\eta ) \big[\sqrt{f(\eta^{x,y})}-\sqrt{f(\eta )}\big]^2\diff\mu (\eta )\\
            & \mathfrak{D}_{x\to y}^e(f ;\mu ) =  \int_{\Omega_N}c_{x\to y}^e (\eta ) \big[\sqrt{f(\eta^{x\to y})}-\sqrt{f(\eta )}\big]^2\diff\mu (\eta )
        \end{align}
    \end{subequations}
    are the bulk Dirichlet forms, and the left boundary Dirichlet form is defined by
    \begin{multline}
        \mathfrak{D}_\ell (f;\mu ) = (\kappa -1)(1-\rho_\ell) \int_{\Omega_N} \xi_1\big[ \sqrt{f(\eta^{1\to 0})}-\sqrt{f(\eta )}\big]^2\diff\mu (\eta ) \\
        + \sum_{j=1}^\kappa {{\kappa -1}\choose{j-1}} (\kappa -1)\rho_\ell \mathfrak{p}_\ell^{j-1}(1-\mathfrak{p}_\ell)^{\kappa -j} \int_{\Omega_N} (1-\xi_1)\big[ \sqrt{f(\eta^{1;j})}-\sqrt{f(\eta )}\big]^2\diff\mu (\eta )\\ 
        + (\mathcal{E}_\ell - \rho_\ell)\int_{\Omega_N} (\kappa \xi_1-\eta_1)\big[ \sqrt{f(\eta^{1,+})}-\sqrt{f(\eta )}\big]^2\diff\mu (\eta ) \\+ (\kappa\rho_\ell -\mathcal{E}_\ell)\int_{\Omega_N}(\eta_1-\xi_1)\big[ \sqrt{f(\eta^{1,-})}-\sqrt{f(\eta )}\big]^2\diff\mu (\eta ).
    \end{multline}
    The Dirichlet form $\mathfrak{D}_r(f;\mu )$ at the right boundary is defined similarly. The map $f\longmapsto \mathfrak{D}_N(f;\mu)$ is convex, non-negative and lower semi-continuous.
\end{definition}

The next result gives an estimation of the spectral radius of the generator $\mathscr{L}_N$ by the Dirichlet form with respect to the reference measure $\bar{\nu}^N$. 

\begin{lemma}\label{lemma:dirichlet_form_estimate}
    There exists a constant $C=C(\mathbf{q}_\ell ,\mathbf{q}_r,\kappa )>0$ such that for any probability density function $f:\Omega_N\longrightarrow\R_+$ with respect to the reference measure $\bar{\nu}^N$, we have
    \begin{equation}
        \big\langle \sqrt{f},\mathscr{L}_N\sqrt{f}\big\rangle_{\bar{\nu}^N} \le -\frac14 \mathfrak{D}_N(f;\bar{\nu}^N) + \frac{C}{N}\, .
    \end{equation}
\end{lemma}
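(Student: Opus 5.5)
The plan is to split $\mathscr{L}_N=\mathscr{L}_0+N^{-\theta}(\mathscr{L}_\ell+\mathscr{L}_r)$ and treat each elementary transition separately, writing $g=\sqrt f$.

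\emph{Boundary terms.} By \cref{lemma:quasi_reversibility}, $\bar\nu^N$ is reversible for $\mathscr{L}_\ell$ and $\mathscr{L}_r$. The standard identity for a reversible generator then gives exactly
\begin{equation*}
\langle g,\mathscr{L}_\ell g\rangle_{\bar\nu^N}=-\tfrac12\mathfrak{D}_\ell(f;\bar\nu^N),
\end{equation*}
and likewise on the right. These contributions therefore produce $-\frac12 N^{-\theta}(\mathfrak{D}_\ell+\mathfrak{D}_r)$ with no error term, which is better than the $-\frac14$ required.

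\emph{Bulk terms.} For a single transition $\eta\mapsto T\eta$ with rate $c(\eta)$ (either $T=\eta^{x,x+1}$ with $c=c^p_{x,x+1}$, or $T=\eta^{x\to y}$ with $c=c^e_{x\to y}$), use $g(T\eta)g(\eta)-g(\eta)^2=-\frac12[g(T\eta)-g(\eta)]^2+\frac12[g(T\eta)^2-g(\eta)^2]$. This gives
\begin{equation*}
\int c\,g(g\circ T-g)\,\diff\bar\nu^N=-\tfrac12\mathfrak{D}_T(f)+\tfrac12\int c\,(f\circ T-f)\,\diff\bar\nu^N .
\end{equation*}
For the second integral, change variables $\eta\mapsto T^{-1}\eta$ in $\int c\,f\circ T\,\diff\bar\nu^N$ (both transitions are bijections on their support). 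Pairing $x\to y$ with the reverse transition $y\to x$, it becomes
\begin{equation*}
\int f(\eta)\Big[\frac{c'(T^{-1}\eta)\,\bar\nu^N(T^{-1}\eta)}{\bar\nu^N(\eta)}-c(\eta)\Big]\diff\bar\nu^N ,
\end{equation*}
where $c'$ denotes the rate of the reverse move. This is the point where the two ratio bounds of \cref{lemma:quasi_reversibility} enter.

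The naive bound $\frac CN\int f\,\diff\bar\nu^N=\frac CN$ per bond, summed over the $N$ bonds, only yields $O(1)$, and with the diffusive $N^2$ scaling even that is too crude. The main obstacle is therefore to gain one more factor $N^{-1}$ by exploiting cancellation. For this I will write the ratio minus one as $\frac{\bar\nu^N(T^{-1}\eta)}{\bar\nu^N(\eta)}-1=\Theta_x(\eta)$. Here $\Theta_x$ is of order $1/N$ and has the discrete-gradient form $a_{x+1}(\eta_x)-a_x(\eta_x)$, coming from the Lipschitz interpolation \eqref{eq:lipschitz_interpolation}. Then
\begin{equation*}
\int c\,\Theta_x\,f\,\diff\bar\nu^N=\int c\,\Theta_x\,\diff\bar\nu^N+\int c\,\Theta_x\,(f-1)\,\diff\bar\nu^N .
\end{equation*}
The first integral is a deterministic term of order $N^{-2}$: to first order it vanishes by reversibility of the product measure with constant parameters, and only second-order Taylor terms of the smooth parameter maps remain. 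Summing over $x$ gives $O(1/N)$.

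For the second integral, I will write $f=g^2$ and express $f\circ T-f=(g\circ T-g)(g\circ T+g)$, then apply Young's inequality $ab\le \frac{\beta}{2}a^2+\frac{1}{2\beta}b^2$. This gives the bound $\frac14\mathfrak{D}_T(f)+C\int c\,\Theta_x^2(g\circ T+g)^2\,\diff\bar\nu^N\le\frac14\mathfrak{D}_T(f)+\frac{C}{N^2}$ per bond. After summing over the $N$ bonds, the total error is $C/N$ and a total of $-\frac14\mathfrak{D}_N$ survives.

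If the cancellation argument proves awkward, the fallback is to perform the Young step directly on $\int c\,(f\circ T-f)\,\diff\bar\nu^N$, rewritten as $\int (g\circ T-g)(g\circ T+g)\,c\,\Theta\,\diff\bar\nu^N$ via the change of variables. This yields $\frac14\mathfrak{D}_T+O(N^{-2})$ per bond immediately, using $\int(g\circ T+g)^2\,\diff\bar\nu^N\le C$ from the quasi-reversibility bound. This second route is likely the cleanest, and it is the one I would try first.
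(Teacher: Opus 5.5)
Your fallback route is correct and is essentially the paper's proof. The boundary terms are handled by reversibility exactly as you do. For the bulk, the paper combines \cref{lemma:quasi_reversibility} with \cite[Lemma 5.1]{bernardin2019slow}, which is precisely your change-of-variables plus Young argument: it gives $-\tfrac14\mathfrak{D}_T+O(N^{-2})$ per move, hence $C/N$ after summing over the $O(N)$ bonds.

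Drop the first route, however. Writing $f=1+(f-1)$ never produces the difference $f\circ T-f$, so $\int c\,\Theta_x(f-1)\,\diff\bar\nu^N$ cannot be controlled by the single-move Dirichlet form. Also, the deterministic part (the case $f\equiv 1$) vanishes identically rather than being $O(N^{-2})$.

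The cancellation you actually use is an antisymmetrization, and your fallback should state it explicitly. For particle exchanges it reads $\int c\,(f\circ T-f)\,\diff\bar\nu^N=-\tfrac12\int c\,(f\circ T-f)\,\Theta\,\diff\bar\nu^N$ with $\Theta=\bar\nu^N(\eta^{x,x+1})/\bar\nu^N(\eta)-1$. For energy moves, the sum of the $x\to y$ and $y\to x$ terms equals $-\int c^e_{x\to y}\,\big(f(\eta^{x\to y})-f(\eta)\big)\,\Theta\,\diff\bar\nu^N$, where $\Theta$ is the rate-weighted ratio minus one from \cref{lemma:quasi_reversibility}.
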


\begin{proof}
    Recall that the measure $\bar{\nu}^N$ is reversible with respect to the boundary generators $\mathscr{L}_\ell$ and $\mathscr{L}_r$. Standard manipulations then yield that
    \begin{equation*}
        \big\langle \sqrt{f},\mathscr{L}_\ell\sqrt{f}\big\rangle_{\bar{\nu}^N} = -\frac12 \mathfrak{D}_\ell(f;\bar{\nu}^N)\qquad\mbox{ and }\qquad \big\langle \sqrt{f},\mathscr{L}_r\sqrt{f}\big\rangle_{\bar{\nu}^N} = -\frac12 \mathfrak{D}_r(f;\bar{\nu}^N).
    \end{equation*}
    Moreover, thanks to the quasi-reversibility property \cref{lemma:quasi_reversibility}, and by \cite[Lemma 5.1]{bernardin2019slow}, we have that 
    \begin{equation*}
        \big\langle \sqrt{f},\mathscr{L}_0\sqrt{f}\big\rangle_{\bar{\nu}^N} \le -\frac 14 \sum_{x=1}^{N-2} \mathfrak{D}_{x,x+1}^p(f ;\bar{\nu}^N) - \frac14 \sum_{x=1}^{N-2}\sum_{|y-x|=1}\mathfrak{D}_{x\to y}^e(f;\bar{\nu}^N) + \frac{C}{N}.
    \end{equation*} 
    Gathering all the previous estimates, we get the desired result.
\end{proof}

We are now in position to prove the one-block and two-blocks estimates that are the main ingredients to prove the local equilibrium property \cref{prop:local_equilibrium}.

\subsection{One-block estimate}

\begin{lemma}[One-block estimate]\label{lemma:one_block}
    For any $t\in [0,T]$, we have that
    \begin{equation}
        \lim_{\ell\to\infty}\lim_{N\to\infty} \sup_{x\in\Lambda_f}\E_{\mu^N}\left[ \bigg|\int_0^t \tau_xf(\eta (s)) - \Psi_f(\mathbf{Q}_x^\ell (s))\diff s\bigg|\right] =0.
    \end{equation}
\end{lemma}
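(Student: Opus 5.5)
The plan is the classical entropy/Dirichlet-form route, made uniform in $x$ by working with a single site rather than a spatial average, in the spirit of \cite{da_cunha_hydrodynamic_2026}.

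\emph{Step 1: reduction to a variational problem.} Fix $x\in\Lambda_f$ and set $V_x(\eta)=\tau_xf(\eta)-\Psi_f(\mathbf{Q}_x^\ell)$. Since $|V_x|\le 2\|f\|_\infty$, it suffices to bound $\E_{\mu^N}[|\int_0^tV_x(\eta(s))\diff s|]$. By the entropy inequality with respect to $\bar\nu^N$ and a constant $\gamma>0$,
\begin{equation*}
\E_{\mu^N}\Big[\Big|\int_0^tV_x\diff s\Big|\Big]\le \frac{H(\mu^N|\bar\nu^N)}{\gamma}+\frac1\gamma\log\E_{\bar\nu^N}\Big[e^{\gamma|\int_0^tV_x\diff s|}\Big].
\end{equation*}
\cref{lemma:entropy_estimate} gives $H(\mu^N|\bar\nu^N)\le CN$, which is too large for a single site. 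This is precisely the obstacle. To handle it, use $e^{|a|}\le e^a+e^{-a}$ and then Feynman--Kac with the generator $N^2\mathscr{L}_N$. Together with \cref{lemma:dirichlet_form_estimate}, this bounds the logarithmic term by
\begin{equation*}
t\sup_g\Big\{\gamma\langle V_x,g\rangle_{\bar\nu^N}-\frac{N^2}{4}\mathfrak{D}_N(g;\bar\nu^N)+CN\Big\},
\end{equation*}
where the supremum runs over densities $g$. The choice $\gamma=AN$ makes the entropy term $C/A$ and leaves the supremum of $\langle V_x,g\rangle-\frac{N}{4A}\mathfrak{D}_N(g)+\frac{C}{A}$. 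The extra factor $N$ in front of the Dirichlet form is what permits a single-site replacement. After this, let $N\to\infty$ and then $A\to\infty$.

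\emph{Step 2: localization.} It remains to show that $\sup_x\sup_g\{\langle V_x,g\rangle_{\bar\nu^N}-\frac{N}{4A}\mathfrak{D}_N(g)\}\to0$ as $N\to\infty$, for each fixed $\ell$ (up to an error vanishing as $\ell\to\infty$). Let $g_x$ be the conditional expectation of $g$ on the box $\Lambda_x^\ell$, which has size $2\ell+1$ and stays inside $\Lambda_N$ near the boundaries. By convexity, the bulk Dirichlet form restricted to bonds in $\Lambda_x^\ell$ satisfies $\mathfrak{D}_{\Lambda_x^\ell}(g_x)\le\mathfrak{D}_N(g)$ up to the $O(\ell/N)$ quasi-reversibility errors of \cref{lemma:quasi_reversibility}. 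The marginal of $\bar\nu^N$ on $\Lambda_x^\ell$ is within $O(\ell/N)$ of the homogeneous product $\nu^{2\ell+1}_{\bar{\mathbf q}_x}$. Any $g_x$ with $\mathfrak{D}(g_x)\ge N^{-1/2}$ has a negative contribution, because $N\cdot N^{-1/2}$ dominates $2\|f\|_\infty$. So only densities with $\mathfrak{D}_{\Lambda_x^\ell}(g_x)\le CAN^{-1/2}\to0$ remain. By compactness of the finite-dimensional state space and lower semicontinuity, limit points are densities with zero Dirichlet form on $\Lambda_x^\ell$. These are constant on the irreducible classes, that is, on the hyperplanes of fixed particle number and energy. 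Such densities are convex combinations of canonical measures $\nu_{\Lambda_x^\ell,K,E}$. None of these bounds depend on $x$, which yields the uniformity in $x$.

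\emph{Step 3: equivalence of ensembles.} The expression now reduces to $\sup_{K,E}|\nu_{\Lambda_x^\ell,K,E}(\tau_xf)-\Psi_f(K/(2\ell+1),E/(2\ell+1))|$, since $\mathbf{Q}_x^\ell$ is constant on each hyperplane. The support of $\tau_xf$ sits inside $\Lambda_x^\ell$ once $\ell$ is large; this holds even near the boundary, because the box is then $\{1,\dots,2\ell+1\}$. The equivalence of ensembles from \cite{da_cunha_stationary_2026} makes this quantity $o_\ell(1)$ uniformly over $(K,E)$, using that $\Psi_f$ is continuous on the compact set $\mathcal{T}_\kappa$. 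Letting $\ell\to\infty$ concludes the proof.

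I expect the main difficulty to be Step 1. There one has to check that the $CN$ defect from \cref{lemma:dirichlet_form_estimate}, multiplied by $t$ and divided by $\gamma=AN$, stays bounded. It gives $C/A$, so this should work. Uniformity at the boundary then comes for free from the definition of $\Lambda_x^\ell$.
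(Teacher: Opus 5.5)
Your proof is correct. Step~1 is exactly the paper's reduction: entropy inequality with parameter $\gamma N$, the bound $e^{|a|}\le e^a+e^{-a}$, then Feynman--Kac with \cref{lemma:entropy_estimate,lemma:dirichlet_form_estimate}. The localization to $\Lambda_x^\ell$ and the passage to the homogeneous product $\tilde\nu_x^\ell=(\nu_{\bar{\mathbf q}_x})^{\otimes\Lambda_x^\ell}$ also match the paper.

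You genuinely differ in how you solve the localized variational problem.

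\emph{The paper's route.} It first replaces $\Psi_f(\mathbf Q_x^\ell)$ by the canonical conditional expectation $f_x^\ell$, using the uniform equivalence of ensembles. Then $V_x^\ell=\tau_xf-f_x^\ell$ has zero conditional mean given $\mathbf Q_x^\ell$. Cauchy--Schwarz and the spectral gap (of order $\ell^{-2}$) of the EPE on the box give the explicit bound $16C'\|f\|_\infty^2\ell^2\gamma e^{2C\ell^2/N}/N$, which is automatically uniform in $x$.

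\emph{Your route.} You use the classical compactness argument. Because of the prefactor $N/(4A)$, only densities with $\mathfrak D_N(g;\bar\nu^N)\le 8A\|f\|_\infty/N$ can contribute positively. Their limit points have zero Dirichlet form, hence are mixtures of canonical measures, and you apply equivalence of ensembles only at the end. This avoids the spectral gap entirely: you only need that the EPE on a box is irreducible on each hyperplane of fixed particle number and energy, which is elementary. The price is that you get no rate.

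For the same reason, ``none of these bounds depend on $x$'' is not an adequate justification of uniformity, since compactness produces no explicit bound. You should run the compactness jointly in $(x,g)$: take near-maximizers $(x_N,g_N)$ and extract a subsequence, using three facts.
\begin{itemize}
\item The position of $x$ inside $\Lambda_x^\ell$ takes at most $2\ell+1$ values, so $V_x$ ranges over a finite family of box functions.
\item $\bar{\mathbf q}_{x_N}$ lies in the compact segment $[\mathbf q_\ell,\mathbf q_r]\subset\mathring{\mathcal T}_\kappa$. Hence the limiting homogeneous measure has full support, which is what turns zero Dirichlet form into constancy on hyperplanes.
\item Densities are bounded by $c^{-(2\ell+1)}$ by \cref{lemma:reference_measure_lower_bound}.
\end{itemize}

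Two inaccuracies are harmless for fixed $\ell$. The convexity bound $\mathfrak D_x^\ell(g_x;\bar\nu_x^\ell)\le\mathfrak D_N(g;\bar\nu^N)$ is exact for a product measure, so no quasi-reversibility error enters. The logarithm of the Radon--Nikodym derivative to the homogeneous measure is $O(\ell^2/N)$, not $O(\ell/N)$.
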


\begin{proof}
    Fix $x\in\Lambda_f$, an admissible site for the local function $\tau_x f$ to be supported in $\Lambda_N$. Consider the stationary measure $\nu_{\rho,\mathcal{E}}^N$ defined in \cref{defin:invariant_measures} for some fixed parameters $(\rho ,\mathcal{E})\in\mathring{\mathcal{T}}_\kappa$. Let~$f_x^\ell$ be the conditional expectation of $\tau_xf$ with respect to the empirical average $\mathbf{Q}_x^\ell$ under the restriction of $\nu_{\rho,\mathcal{E}}^N$ to the box $\Lambda_x^\ell$. As the measure $\nu_{\rho,\mathcal{E}}^N$ conditioned to a fixed value of~$\mathbf{Q}_x^\ell$ is uniform, it does not depend on the parameters $(\rho ,\mathcal{E})$, and therefore $f_x^\ell$ is also independent of~$(\rho ,\mathcal{E})$. The equivalence of ensembles \cite[Proposition 17]{da_cunha_stationary_2026} for the measure $\nu_{\rho,\mathcal{E}}^N$ then yields that
    \begin{equation*}
        \sup_{\eta\in\Omega_N} \big| f_x^\ell(\eta ) - \Psi_f(\mathbf{Q}_x^\ell (\eta ))\big| \le \frac{C}{\ell}
    \end{equation*}
    for some constant $C>0$ that depends only on the local function $f$. Therefore, it is enough to prove that
    \begin{equation}\label{eq:one_block_estimate_to_prove}
        \lim_{N\to\infty} \sup_{x\in\Lambda_f}\E_{\mu^N}\left[ \bigg|\int_0^t V_x^\ell (\eta (s))\diff s\bigg|\right] =0
    \end{equation}
    where $V_x^\ell = \tau_xf - f_x^\ell$. The entropy inequality \cite[Appendix 1.8]{kipnis_scaling_1999} implies that the expectation in \eqref{eq:one_block_estimate_to_prove} is bounded above by
    \begin{equation*}
        \frac{H(\mu^N|\bar{\nu}^N)}{\gamma N} + \frac{1}{\gamma N}\log \E_{\bar{\nu}^N}\left[ \exp\left( \gamma N \bigg|\int_0^t V_x^\ell (\eta (s))\diff s\bigg|\right)\right]
    \end{equation*}
    for any $\gamma >0$. Thanks to the inequality $e^{|x|}\le e^x+e^{-x}$ together with the inequality
    \begin{equation*}
        \limsup_{N\to\infty} \frac{1}{N}\log (a_N+b_N) \le \max\left\{ \limsup_{N\to\infty}\frac{1}{N}\log a_N , \limsup_{N\to\infty}\frac{1}{N}\log b_N\right\},
    \end{equation*}
    we can remove the absolute value in the expectation above. Then, the Feynman-Kac formula \cite[Appendix 1.7]{kipnis_scaling_1999} together with \cref{lemma:entropy_estimate,lemma:dirichlet_form_estimate} imply that the expectation in \eqref{eq:one_block_estimate_to_prove} is bounded above by
    \begin{equation}\label{eq:supremum_estimate_one_block}
        \frac{C}{\gamma} + t\sup_g \left\{ \bar{\nu}^N(|V_x^\ell|g) - \frac{N}{4\gamma}\mathfrak{D}_N(g;\bar{\nu}^N) \right\}.
    \end{equation}
    where the supremum is taken over density functions with respect to $\bar{\nu}^N$.
    Let us estimate the supremum in the above expression. We start by localizing everything to the block $\Lambda_x^\ell$. For this, we introduce the following notations:
    \begin{itemize}
        \item The measure $\bar{\nu}_x^\ell$ is the restriction of the reference measure $\bar{\nu}^N$ to the box $\Lambda_x^\ell$ ;
        \item The function $g_x^\ell$ is the conditional expectation of $g$ with respect to the box $\Lambda_x^\ell$ under the measure $\bar{\nu}^N$ ;
        \item If $h:\{0,1,\hdots ,\kappa\}^{\Lambda_x^\ell}\longrightarrow\R_+$, we define its Dirichlet form $\mathfrak{D}_x^\ell$ on $\Lambda_x^\ell$ by
        \begin{equation*}
            \mathfrak{D}_x^\ell (h;\bar{\nu}_x^\ell) = \sum_{\{y,y+1\}\subset\Lambda_x^\ell} \mathfrak{D}_{y,y+1}^p (h ;\bar{\nu}_x^\ell ) + \sum_{\{y,y+1\}\subset\Lambda_x^\ell}\sum_{|z-y|=1}\mathfrak{D}_{y\to z}^e(h;\bar{\nu}_x^\ell ),
        \end{equation*}
        where with a slight abuse of notation, the integrals are now taken over $\{0,1,\hdots, \kappa\}^{\Lambda_x^\ell}$ instead of $\Omega_N$.
    \end{itemize}
    Then, by convexity of the Dirichlet form, and as $g_x^\ell$ can be seen either as a function on $\Omega_N$ or on~$\{0,1,\hdots ,\kappa\}^{\Lambda_x^\ell}$, we have that
    \begin{align*}
        \mathfrak{D}_x^\ell (g_x^\ell ; \bar{\nu}_x^\ell ) \le \mathfrak{D}_N(g;\bar{\nu}^N).
    \end{align*}
    Note that the function $V_x^\ell$ depends only on the coordinates inside the box $\Lambda_x^\ell$, therefore
    \begin{equation*}
        \bar{\nu}^N(V_x^\ell g) = \bar{\nu}^N(V_x^\ell g_x^\ell) = \bar{\nu}_x^\ell(V_x^\ell g_x^\ell).
    \end{equation*}
    It follows that the supremum in \eqref{eq:supremum_estimate_one_block} is bounded above by
    \begin{equation}\label{eq:localized_supremum_estimate_one_block}
        \sup_g \left\{ \bar{\nu}_x^\ell(V_x^\ell g) - \frac{N}{4\gamma}\mathfrak{D}_x^\ell(g;\bar{\nu}_x^\ell) \right\}
    \end{equation}
    where this time the supremum is taken over density functions $g:\{0,1,\hdots ,\kappa\}^{\Lambda_x^\ell}$ with respect to~$\bar{\nu}_x^\ell$. 

    \medskip

    Now that everything is localized to the box $\Lambda_x^\ell$, we change the inhomogeneous measure $\bar{\nu}_x^\ell$ to an homogeneous measure $\tilde{\nu}_x^\ell$. More precisely, define the homogeneous measure with parameter~$\bar{\mathbf{q}}_x$, \emph{i.e.}
    \begin{equation*}
        \tilde{\nu}_x^\ell = \big(\nu_{\bar{\mathbf{q}}_x} \big)^{\otimes\Lambda_x^\ell}
    \end{equation*}
    and the Radon-Nikodym derivative
    \begin{equation*}
        h_x^\ell = \frac{\diff\bar{\nu}_x^\ell}{\diff\tilde{\nu}_x^\ell} = \prod_{y\in\Lambda_x^\ell} \frac{\nu_{\bar{\mathbf{q}}_y}}{\nu_{\bar{\mathbf{q}}_x}}.
    \end{equation*}
    Thanks to the structure \eqref{eq:lipschitz_interpolation} of the interpolation, one can check that for any $\sigma\in\{0,1,\hdots ,\kappa\}^{\Lambda_x^\ell}$
    \begin{equation*}
       \big|\log h_x^\ell (\sigma )\big| \le \sum_{y\in\Lambda_x^\ell} \big|\log \nu_{\bar{\mathbf{q}}_y}(\sigma_y) - \log \nu_{\bar{\mathbf{q}}_x}(\sigma_y)\big| \le C\frac{\ell^2}{N}
    \end{equation*}
    where $C>0$ is a constant that is independent of $x$, but depends only on the boundary parameters~$\mathbf{q}_\ell$, $\mathbf{q}_r$ and $\kappa$. This implies that uniformly over $x$, we have the bounds
    \begin{equation}\label{eq:bounds_Radon_Nikodym_derivative}
        e^{-C\ell^2/N} \le h_x^\ell (\sigma ) \le e^{C\ell^2/N}.
    \end{equation}

    If $g$ is a density function with respect to $\bar{\nu}_x^\ell$, define $\tilde{g} = g/\tilde{\nu}_x^\ell (g)$ so that $\tilde{g}$ is a density function with respect to $\tilde{\nu}_x^\ell$. Notice that $1=\bar{\nu}_x^\ell (g) = \tilde{\nu}_x^\ell (h_x^\ell g)$ so the bound \eqref{eq:bounds_Radon_Nikodym_derivative} implies that
    \begin{equation}\label{eq:bound_mass_of_g}
        e^{-C\ell^2/N} \le \tilde{\nu}_x^\ell (g)\le e^{C\ell^2/N}.
    \end{equation}
    Therefore,
    \begin{align*}
        \big|\bar{\nu}_x^\ell (V_x^\ell g) - \tilde{\nu}_x^\ell (V_x^\ell \tilde{g})\big| & = \tilde{\nu}_x^\ell \left( V_x^\ell g\big( h_x^\ell -\tilde{\nu}_x^\ell (g)^{-1}\big)\right) \\
        & \le 2\| f\|_\infty |e^{C\ell^2/N}-e^{-C\ell^2/N}| \\
        & \le C\frac{\ell^2}{N}
    \end{align*}
    because $|V_x^\ell|\le 2\| f\|_\infty$. Therefore, we can replace the inhomogeneous measure $\bar{\nu}_x^\ell$ by the homogeneous measure $\tilde{\nu}_x^\ell$ in the first term inside the supremum \eqref{eq:localized_supremum_estimate_one_block} up to an error of order $\ell^2/N$. For the Dirichlet form part, thanks to \eqref{eq:bounds_Radon_Nikodym_derivative}, we have that $\mathfrak{D}_x^\ell (g ; \bar{\nu}_x^\ell ) \ge e^{-C\ell^2/N}\mathfrak{D}_x^\ell (g ; \tilde{\nu}_x^\ell )$. Moreover, by the definition of $\tilde{g}$ and the bounds \eqref{eq:bound_mass_of_g}, we have that
    \begin{equation*}
        \mathfrak{D}_x^\ell (g ; \bar{\nu}_x^\ell )\ge e^{-2C\ell^2/N}\mathfrak{D}_x^\ell (\tilde{g} ; \tilde{\nu}_x^\ell ).
    \end{equation*}
    Gathering all the previous estimates, we have proved that the supremum \eqref{eq:localized_supremum_estimate_one_block} is bounded above by
    \begin{equation}\label{eq:homogeneous_supremum_estimate_one_block}
        \sup_{\tilde{g}} \left\{ \tilde{\nu}_x^\ell(V_x^\ell \tilde{g}) - \frac{N}{4\gamma}e^{-2C\ell^2/N}\mathfrak{D}_x^\ell(\tilde{g};\tilde{\nu}_x^\ell) \right\} + C\frac{\ell^2}{N}
    \end{equation}
    where the supremum is taken over density functions $\tilde{g}$ with respect to $\tilde{\nu}_x^\ell$. We are reduced to an expression that contains only a homogeneous product measure. Set $w=\sqrt{\tilde{g}}$ and define its conditional expectation $\bar{w}=\tilde{\nu}_x^\ell (w|\mathbf{Q}_x^\ell)$ with respect to the empirical particle and energy density averages over $\Lambda_x^\ell$. The conditional expectation of $V_x^\ell$ with respect to $\mathbf{Q}_x^\ell$ is equal to zero by construction, therefore we have that
    \begin{equation*}
        \tilde{\nu}_x^\ell (V_x^\ell \tilde{g}) = \tilde{\nu}_x^\ell (V_x^\ell (w^2-\bar{w}^2)) = \tilde{\nu}_x^\ell\big( V_x^\ell (w-\bar{w})(w+\bar{w})\big).
    \end{equation*}
    Cauchy-Schwarz inequality then yields that
    \begin{equation*}
        \tilde{\nu}_x^\ell (V_x^\ell \tilde{g}) \le \sqrt{\tilde{\nu}_x^\ell\big( (V_x^\ell)^2 (w+\bar{w})^2\big)}\sqrt{\tilde{\nu}_x^\ell\big( (w-\bar{w})^2\big)} \le 4\| f\|_\infty \sqrt{\tilde{\nu}_x^\ell\big( (w-\bar{w})^2\big)}.
    \end{equation*}
    Notice that the function $w-\bar{w}$ has mean zero under $\nu_{\rho,\mathcal{E}}^{\otimes 2\ell +1}$ for any $(\rho ,\mathcal{E})\in\mathcal{T}_\kappa$. Therefore, we can apply the spectral gap inequality proved for these homogeneous product measures in \cite[Proposition 22]{da_cunha_stationary_2026}. It gives a universal constant $C'=C'(\kappa )>0$ for which
    \begin{equation*}
        \tilde{\nu}_x^\ell\big( (w-\bar{w})^2\big) \le C'\ell^2 \mathfrak{D}_x^\ell (g;\tilde{\nu}_x^\ell).
    \end{equation*}
    As a consequence, the supremum in \eqref{eq:homogeneous_supremum_estimate_one_block} is bounded above by
    \begin{multline*}
        \sup_{\tilde{g}} \left\{ 4\sqrt{C'}\|f\|_\infty \ell \sqrt{\mathfrak{D}_x^\ell (g;\tilde{\nu}_x^\ell)} -\frac{N}{4\gamma}e^{-2C\ell^2/N}\mathfrak{D}_x^\ell (g;\tilde{\nu}_x^\ell)\right\}\\
        \le \sup_{a\ge 0}\left\{ 4\sqrt{C'}\| f\|_\infty \sqrt{a} - \frac{N}{4\gamma}e^{-2C\ell^2/N}a\right\} = \frac{16C'\| f\|_\infty^2\ell^2\gamma}{N}e^{2C\ell^2/N}.
    \end{multline*}
    Gathering all the previous estimates, we have proved that for any $\gamma >0$, we have 
    \begin{equation*}
       \E_{\mu^N}\left[ \int_0^t\big| V_x^\ell (\eta (s))|\diff s\right] \le \frac{C}{\gamma} + \frac{CT\ell^2}{N} +\frac{16C'\| f\|_\infty^2\ell^2\gamma}{N}e^{2C\ell^2/N}.
    \end{equation*}
    This estimate is uniform over $x\in\Lambda_f$, therefore we can take the supremum over $x$ on the left-hand side at this point. Letting $N$ go to infinity, and then $\gamma$ go to infinity, we proved \eqref{eq:one_block_estimate_to_prove}, and this concludes the proof of \cref{lemma:one_block}.
\end{proof}

\subsection{Two-blocks estimate}

\begin{lemma}[Two-blocks estimate]\label{lemma:two_blocks}
    For any $t\in [0,T]$, we have that
    \begin{equation}
        \limsup_{\ell\to\infty}\limsup_{\varepsilon\to 0}\limsup_{N\to\infty} \sup_{x\in\Lambda_N}\E_{\mu^N}\left[\int_0^t |\mathbf{Q}_x^\ell (s)-\mathbf{Q}_x^{\varepsilon N}(s)|\diff s\right] =0.
    \end{equation}
\end{lemma}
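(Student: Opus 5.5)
We follow the classical two-blocks argument in the form adapted in \cite{da_cunha_hydrodynamic_2026}. The aim is a bound uniform in $x$, including when the box $\Lambda_x^{\varepsilon N}$ is pushed against a boundary.

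\emph{Step 1: reduction to pairs of microscopic blocks.} Write $\Lambda_x^{\varepsilon N}$, up to $O(\ell/(\varepsilon N))$ boundary effects, as a union of disjoint translates $\Lambda_{y}^\ell$ of the small box. Then $\mathbf{Q}_x^{\varepsilon N}$ is, up to an error $O(\ell/(\varepsilon N))$, the average of $\mathbf{Q}_y^\ell$ over centers $y$ spaced by $2\ell+1$ inside $\Lambda_x^{\varepsilon N}$. By the triangle inequality,
\begin{equation*}
    |\mathbf{Q}_x^\ell-\mathbf{Q}_x^{\varepsilon N}|\le \max_{y}|\mathbf{Q}_x^\ell-\mathbf{Q}_y^\ell| + O\big(\tfrac{\ell}{\varepsilon N}\big),
\end{equation*}
where the maximum runs over centers $y$ with $|y-x|\le 2\varepsilon N$ and $\Lambda_x^\ell\cap\Lambda_y^\ell=\emptyset$. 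The overlapping case is absorbed into the one-block-type estimate, since $\Lambda_x^\ell$ and $\Lambda_y^\ell$ then share most sites. It is actually cleaner to bound by an average over $y$ rather than a maximum. It therefore suffices to control, uniformly in $x$ and in such $y$,
\begin{equation*}
    \E_{\mu^N}\Big[\int_0^t |\mathbf{Q}_x^\ell(s)-\mathbf{Q}_y^\ell(s)|\diff s\Big].
\end{equation*}

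\emph{Step 2: entropy and Dirichlet form.} Proceed exactly as in the proof of \cref{lemma:one_block}. The entropy inequality, \cref{lemma:entropy_estimate}, the Feynman--Kac formula and \cref{lemma:dirichlet_form_estimate} bound the expectation by
\begin{equation*}
    \frac{C}{\gamma}+t\sup_g\Big\{\bar\nu^N\big(|\mathbf{Q}_x^\ell-\mathbf{Q}_y^\ell|\,g\big)-\frac{N}{4\gamma}\mathfrak{D}_N(g;\bar\nu^N)\Big\}.
\end{equation*}
We then localize $g$ to $\Lambda_x^\ell\cup\Lambda_y^\ell$. The point is to keep some Dirichlet-form control linking the two blocks, so we introduce an auxiliary ``long jump'' exchange between the two sites $x':=$ the rightmost point of $\Lambda_x^\ell$ and $y':=$ the leftmost point of $\Lambda_y^\ell$, for both particle swaps and energy transfers. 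This is the \emph{moving particle lemma}. A particle swap along a path of length $|y-x|\le 2\varepsilon N$ is decomposed into nearest-neighbour swaps, and Cauchy--Schwarz gives
\begin{equation*}
    \mathfrak{D}^p_{x',y'}(g;\bar\nu^N)\le C|y-x|\sum_{z}\mathfrak{D}^p_{z,z+1}(g;\bar\nu^N)\big(1+O(\varepsilon)\big),
\end{equation*}
where the factor $1+O(\varepsilon)$ comes from \cref{lemma:quasi_reversibility} along the path. For energy transfers we first move the particle adjacent and perform the transfer there. The rates $c^e$ vanish on empty sites, so the path must be chosen with particles swapped along. This is a composition of finitely many bulk moves per step, and I expect it to be the main technical obstacle: the energy exchange is constrained by $\xi_x\xi_y$ and by the bounds $1\le\eta\le\kappa$. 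The result is that the localized Dirichlet form on the doubled box $\Lambda_x^\ell\cup\Lambda_y^\ell$, augmented with the long bond, is bounded by $C\varepsilon N\,\mathfrak{D}_N(g;\bar\nu^N)$. The penalty thus becomes $\frac{1}{4C\gamma\varepsilon}$ times this local form.

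\emph{Step 3: homogenization and spectral gap.} As in \cref{lemma:one_block}, replace $\bar\nu^N$ on the two blocks by the homogeneous product measure with parameter $\bar{\mathbf{q}}_x$. The profile varies by $O(\varepsilon)$ over distance $\varepsilon N$, so the Radon--Nikodym derivative lies in $e^{\pm C\ell\varepsilon}$, a factor that goes to $1$ as $\varepsilon\to0$. The doubled box with the long bond is isomorphic to a segment of length $4\ell+2$, so we can use the spectral gap \cite[Proposition 22]{da_cunha_stationary_2026} together with the conditioning on the total $\mathbf{Q}$ over the union. The sup then tends to $0$ as $\varepsilon\to0$ for fixed $\ell,\gamma$, since the Dirichlet-form penalty blows up: the limiting densities have vanishing Dirichlet form and so are functions of the joint conserved quantities. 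This leaves
\begin{equation*}
    \sup_{\mathbf{q}}E_{\nu}\big[\,|\mathbf{Q}_x^\ell-\mathbf{Q}_y^\ell|\;\big|\;\text{total}=\mathbf{q}\big].
\end{equation*}
By equivalence of ensembles \cite[Proposition 17]{da_cunha_stationary_2026} and the law of large numbers, this is $O(\ell^{-1/2})$ uniformly. Finally, let $N\to\infty$, then $\varepsilon\to0$, then $\gamma\to\infty$, then $\ell\to\infty$. All the bounds are uniform in $x\in\Lambda_N$, because the boxes \eqref{eq:lambda_x_ell} stay inside $\Lambda_N$ and the constants only depend on $\mathbf{q}_\ell,\mathbf{q}_r,\kappa$.
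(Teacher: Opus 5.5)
Your outline matches the paper's proof. It uses the entropy inequality, Feynman--Kac and \cref{lemma:dirichlet_form_estimate}, then splits $\Lambda_x^{\varepsilon N}$ into blocks of size $2\ell+1$. A fictitious long bond between two blocks costs $C\varepsilon N\,\mathfrak{D}_N(g;\bar\nu^N)$ by a moving particle-energy argument, and homogenization costs $e^{\pm C\ell\varepsilon}$. You then apply the spectral gap on the doubled box and need a uniform $O(\ell^{-1/2})$ bound on $\E[|\mathbf{Q}_x^\ell-\mathbf{Q}_y^\ell|\mid \mathbf{Q}_x^\ell+\mathbf{Q}_y^\ell]$. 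Two differences are harmless. Pulling the average over blocks out of the expectation and handling one pair at a time is equivalent to what the paper does. Linking adjacent endpoints rather than the centers $x,y_j$ is, if anything, more convenient, since the doubled box then really is a segment.

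The $O(\ell^{-1/2})$ bound in your last step is true, but your justification for it does not work. Equivalence of ensembles \cite[Proposition 17]{da_cunha_stationary_2026} compares canonical and grand-canonical expectations of \emph{local} functions. The function $|\mathbf{Q}_x^\ell-\mathbf{Q}_y^\ell|$ depends on every site of the conditioned box. The canonical law of a half-box average is genuinely not close to the grand-canonical one: already for SSEP the half-box count is hypergeometric, with about half the binomial variance. You also need a bound uniform over all values of the conditioned totals, including degenerate ones at the edges of $\mathcal{T}_\kappa$, which a local-CLT argument does not give for free.

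What makes the step work is exchangeability, as in \cref{lemma:conditional_expectation}. Conditionally on the sum, the $4\ell+2$ vectors $\mathbf{Q}_z$ are exchangeable. A direct second-moment computation then gives $\E[|\mathbf{Q}_x^\ell-\mathbf{Q}_y^\ell|^2\mid \mathbf{Q}_x^\ell+\mathbf{Q}_y^\ell]\le C/\ell$ for every value of the sum.

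Two smaller corrections. First, in Step 1 the average over $y$ is necessary, not just cleaner, because $\E[\max_y |\mathbf{Q}_x^\ell-\mathbf{Q}_y^\ell|]$ does not reduce to pairwise expectations. The at most two tiles meeting $\Lambda_x^\ell$ go into the $O(\ell/(\varepsilon N))$ error by counting, not via a one-block argument. Second, your displayed moving-particle bound with only particle Dirichlet forms on the right is false. Particle jumps preserve the order of particles, so carrying a particle past occupied sites with different energies requires energy transfers. The right-hand side must be the full $\mathfrak{D}_N$. This is what \cref{lemma:moving_particle} does: it realizes each nearest-neighbour transposition of two occupied sites with energies $m\neq n$ by $|m-n|\le\kappa-1$ energy transfers.
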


\begin{proof}
    The ideas of the proof are similar to the ones used in the proof of the one-block estimate, so some steps are only sketched while some others are more detailed.

    \medskip

    Fix $x\in\Lambda_N$. As before, using subsequently the entropy inequality, the Feynman-Kac formula, the entropy bound \cref{lemma:entropy_estimate} and the Dirichlet form estimate \cref{lemma:dirichlet_form_estimate}, we get that the expectation in the statement is bounded above by
    \begin{equation*}
        \frac{C}{\gamma} + t\sup_g \left\{ \bar{\nu}^N\big(|\mathbf{Q}_x^\ell - \mathbf{Q}_x^{\varepsilon N}|g\big) - \frac{N}{4\gamma}\mathfrak{D}_N(g;\bar{\nu}^N) \right\}
    \end{equation*}
    for any $\gamma >0$, where the supremum is taken over density functions with respect to $\bar{\nu}^N$. Divide the box $\Lambda_x^{\varepsilon N}$ into $p=\big\lfloor \frac{2\varepsilon N+1}{2\ell +1}\big\rfloor$ disjoint boxes of size $2\ell +1$, plus possibly two leftover blocks of size at most $2\ell +1$. Then, we can write that 
    \begin{equation*}
        \mathbf{Q}_x^\ell -\mathbf{Q}_x^{\varepsilon N} =\frac1p \sum_{j=-p/2}^{p/2} (\mathbf{Q}_x^\ell -\mathbf{Q}_{x+j(2\ell +1)}^\ell ) + r (\ell ,\varepsilon ,N)
    \end{equation*}
    where $r(\ell ,\varepsilon ,N)$ is a remainder term or order $O\big(\frac{\ell}{\varepsilon N}\big)$. In order to estimate the supremum above, we have to estimate the supremum
    \begin{equation}\label{eq:supremum_estimate_two_blocks}
        \sup_g \left\{ \frac1p \sum_{j=-p/2}^{p/2}\bar{\nu}^N\big( |\mathbf{Q}_x^\ell -\mathbf{Q}_{x+j(2\ell +1)}^\ell |g\big) - \frac{N}{4\gamma}\mathfrak{D}_N(g;\bar{\nu}^N) \right\}.
    \end{equation}
    In order to simplify the notations, we define $y_j= x+j(2\ell +1)$. Notice that $\mathbf{Q}_x^\ell - \mathbf{Q}_{y_j}^\ell$ depends only on the coordinates inside the box $\Lambda_{x,j}^\ell\coloneq \Lambda_x^\ell\cup \Lambda_{y_j}^\ell$. We introduce the following notations:
    \begin{itemize}
        \item The measure $\bar{\nu}_{x,j}^\ell$ is the restriction of the reference measure $\bar{\nu}^N$ to $\Lambda_{x,j}^\ell$ ;
        \item The function $g_{x,j}^\ell$ is the conditional expectation of $g$ with respect to the box $\Lambda_{x,j}^\ell$ under the measure $\bar{\nu}^N$ ;
        \item If $h:\{0,1,\hdots ,\kappa\}^{\Lambda_{x,j}^\ell}\longrightarrow\R_+$, we define its Dirichlet form $\mathfrak{D}_{x,j}^\ell$ on $\Lambda_{x,j}^\ell$ by
        \begin{equation*}
            \mathfrak{D}_{x,j}^\ell (h;\bar{\nu}_{x,j}^\ell) = J_{x,y_j}(h;\bar{\nu}_{x,j}^\ell)+ \sum_{\{y,y+1\}\subset\Lambda_{x,j}^\ell} \mathfrak{D}_{y,y+1}^p (h ;\bar{\nu}_{x,j}^\ell ) + \sum_{\{y,y+1\}\subset\Lambda_{x,j}^\ell}\sum_{|z-y|=1}\mathfrak{D}_{y\to z}^e(h;\bar{\nu}_{x,j}^\ell )
        \end{equation*}
        where $J_{x,y_j}(h;\bar{\nu}_{x,j}^\ell)$ is a Dirichlet form that allows the two boxes $\Lambda_x^\ell$ and $\Lambda_{y_j}^\ell$ to communicate. It is defined by
        \begin{equation}\label{eq:def_J_x_yj}
            J_{x,y_j}(h;\bar{\nu}_{x,j}^\ell) = \mathfrak{D}_{x,y_j}^p (h ;\bar{\nu}_{x,j}^\ell ) + \mathfrak{D}_{x\to y_j}^e(h;\bar{\nu}_{x,j}^\ell ) + \mathfrak{D}_{y_j\to x}^e(h;\bar{\nu}_{x,j}^\ell ).
        \end{equation}
        Adding this Dirichlet form is like adding a fictitious bond $\{x,y_j\}$ on which particles can jump and exchange energy.
    \end{itemize}
    Our goal is first to estimate
    \begin{equation*}
        \frac1p \sum_{j=-p/2}^{p/2}\mathfrak{D}_{x,j}^\ell (g_{x,j}^\ell ;\bar{\nu}_{x,j}^\ell)
    \end{equation*}
    in terms of the total Dirichlet form $\mathfrak{D}_N(g;\bar{\nu}^N)$. Performing the same proof as in the one-block estimate, we easily check that
    \begin{equation*}
        \frac{1}{p}\sum_{j=-p/2}^{p/2}\left( \sum_{\{y,y+1\}\subset\Lambda_{x,j}^\ell} \mathfrak{D}_{y,y+1}^p (g_{x,j}^\ell ;\bar{\nu}_{x,j}^\ell ) + \sum_{\{y,y+1\}\subset\Lambda_{x,j}^\ell}\sum_{|z-y|=1}\mathfrak{D}_{y\to z}^e(g_{x,j}^\ell;\bar{\nu}_{x,j}^\ell )\right) \le \mathfrak{D}_N(g;\bar{\nu}^N),
    \end{equation*}
    so we only have to estimate the contribution
    \begin{equation*}
        \frac{1}{p}\sum_{j=-p/2}^{p/2} J_{x,y_j}(g_{x,j}^\ell ;\bar{\nu}_{x,j}^\ell)
    \end{equation*}
    coming from the fictitious bonds $\{x,y_j\}$. We can extend $J_{x,y_j}$ to a Dirichlet form $\bar{J}_{x,y_j}$ on the whole space $\Omega_N$. Then, by convexity, we have that
    \begin{equation*}
        J_{x,y_j}(g_{x,j}^\ell ;\bar{\nu}_{x,j}^\ell) \le \bar{J}_{x,y_j}(g;\bar{\nu}^N).
    \end{equation*}
    We use the forthcoming moving particle-energy \cref{lemma:moving_particle} to estimate the above quantity as follows
    \begin{equation*}
        \bar{J}_{x,y_j}(g;\bar{\nu}^N) \le C|y_j-x|\mathfrak{D}_N(g;\bar{\nu}^N) \le C\varepsilon N\mathfrak{D}_N(g;\bar{\nu}^N)
    \end{equation*}
    Gathering all the previous estimates, we have proved that
    \begin{equation*}
        \frac1p \sum_{j=-p/2}^{p/2}\mathfrak{D}_{x,j}^\ell (g_{x,j}^\ell ;\bar{\nu}_{x,j}^\ell) \le C\varepsilon N\mathfrak{D}_N(g;\bar{\nu}^N).
    \end{equation*}
    Thanks to this estimate, we can proceed as in the proof of the one-block to localize everything to the box $\Lambda_{x,j}^\ell$. Indeed, this implies that the supremum \eqref{eq:supremum_estimate_two_blocks} is bounded above by
    \begin{multline*}
        \sup_g \left\{\frac1p\sum_{j=-p/2}^{p/2} \left( \bar{\nu}_{x,j}^\ell\big( |\mathbf{Q}_x^\ell -\mathbf{Q}_{y_j}^\ell |g_{x,j}^\ell\big) - \frac{1}{4C\gamma\varepsilon}\mathfrak{D}_{x,j}^\ell (g_{x,j}^\ell ;\bar{\nu}_{x,j}^\ell)\right)\right\} \\
        \le \sup_{|j|\le p/2} \sup_g \left\{ \bar{\nu}_{x,j}^\ell (|\mathbf{Q}_x^\ell -\mathbf{Q}_{y_j}^\ell |g) - \frac{1}{4C\gamma\varepsilon}\mathfrak{D}_{x,j}^\ell (g ; \bar{\nu}_{x,j}^\ell)\right\}
    \end{multline*}
    where this time, the supremum is taken over density functions $g:\{0,1,\hdots ,\kappa\}^{\Lambda_{x,j}^\ell}\longrightarrow\R_+$ with respect to $\bar{\nu}_{x,j}^\ell$. We now change the inhomogeneous product measure $\bar{\nu}_{x,j}^\ell$ to a homogeneous product measure 
    \begin{equation*}
        \tilde{\nu}_{x,j}^\ell = \big( \nu_{\bar{\mathbf{q}}_x}\big)^{\otimes \Lambda_{x,j}^\ell}
    \end{equation*}
    using the same techniques as in the proof of the one-block estimate. More precisely, by estimating the Radon-Nikodym derivative between both measures, one can check that the supremum above is bounded by
    \begin{equation}\label{eq:supremum_estimate_two_blocks_homogeneous}
        C\delta (N,\varepsilon ,\ell) + \sup_{g}\left\{ \tilde{\nu}_{x,j}^\ell (|\mathbf{Q}_x^\ell -\mathbf{Q}_{y_j}^\ell |g) - \frac{e^{-2\delta (N,\varepsilon ,\ell)}}{4C\gamma\varepsilon}\mathfrak{D}_{x,j}^\ell(g;\tilde{\nu}_{x,j}^\ell)\right\}
    \end{equation}
    where $\delta (N,\varepsilon ,\ell )$ is some error term that satisfies
    \begin{equation*}
        \delta (N,\varepsilon ,\ell ) \le C\ell \left( \varepsilon + \frac{\ell}{N}\right)
    \end{equation*}
    and the supremum is taken over densities with respect to $\tilde{\nu}_{x,j}^\ell$. Hence, we are left to estimate the supremum in \eqref{eq:supremum_estimate_two_blocks_homogeneous} with respect to a homogeneous product measure. If $g$ is such a density, we set~$w=\sqrt{g}$ and $\bar{w} = \tilde{\nu}_{x,j}^\ell (w|\mathbf{Q}_x^\ell+\mathbf{Q}_{y_j}^\ell)$. Set also $V_{x,j}^\ell = |\mathbf{Q}_x^\ell -\mathbf{Q}_{y_j}^\ell |$ and $\bar{V}_{x,j}^\ell = \tilde{\nu}_{x,j}^\ell (V_{x,j}^\ell | \mathbf{Q}_x^\ell +\mathbf{Q}_{y_j}^\ell)$. Then, by Cauchy-Schwarz inequality we have that
    \begin{align*}
        \big| \tilde{\nu}_{x,j}^\ell (V_{x,j}^\ell g) - \tilde{\nu}_{x,j}^\ell (\bar{V}_{x,j}^\ell g)\big| & = \big| \tilde{\nu}_{x,j}^\ell \big( (V_{x,j}^\ell - \bar{V}_{x,j}^\ell)(w^2-\bar{w}^2)\big)\big| \\
        & \le \sqrt{\tilde{\nu}_{x,j}^\ell \big( (V_{x,j}^\ell - \bar{V}_{x,j}^\ell)^2 (w+\bar{w})^2\big)}\sqrt{\tilde{\nu}_{x,j}^\ell\big( (w-\bar{w})^2\big)} \\
        & \le 4\| V_{x,j}^\ell\|_\infty \sqrt{\tilde{\nu}_{x,j}^\ell\big( (w-\bar{w})^2\big)}.
    \end{align*}
    Notice that $\| V_{x,j}^\ell\|_\infty \le C$ for some constant $C=C(\kappa )>0$. Moreover, the function $w-\bar{w}$ has mean zero under a homogeneous product measure for any value of the parameters $(\rho ,\mathcal{E})\in\mathcal{T}_\kappa$. Therefore, we can apply the spectral gap inequality \cite[Proposition 22]{da_cunha_stationary_2026} to get that the variance on the right-hand side is bounded above by $C\ell^2 \mathfrak{D}_{x,j}^\ell (g;\tilde{\nu}_{x,j}^\ell)$ for some universal constant~$C(\kappa )>0$. Therefore, we have proved that the supremum in \eqref{eq:supremum_estimate_two_blocks_homogeneous} is bounded above by
    \begin{equation*}
        \tilde{\nu}_{x,j}^\ell (\bar{V}_{x,j}^\ell g) + \sup_{g} \left\{ C'\ell \sqrt{\mathfrak{D}_{x,j}^\ell (g;\tilde{\nu}_{x,j}^\ell)} - \frac{e^{-2\delta (N,\varepsilon ,\ell)}}{4C\gamma\varepsilon}\mathfrak{D}_{x,j}^\ell(g;\tilde{\nu}_{x,j}^\ell)\right\}.
    \end{equation*}
    This latter supremum is bounded above by
    \begin{equation*}
        \sup_{a\ge 0} \left\{ C'\ell\sqrt{a} - \frac{e^{-2\delta (N,\varepsilon ,\ell)}}{4C\gamma\varepsilon}a\right\} = C''\ell^2\gamma\varepsilon e^{2\delta (N,\varepsilon ,\ell)}
    \end{equation*}
    which vanishes as $N\to\infty$, $\varepsilon\to 0$ and $\ell\to\infty$ uniformly over $x\in\Lambda_N$ and $|j|\le p/2$. 

    \medskip

    To conclude, we only need to estimate the first term $\tilde{\nu}_{x,j}^\ell (\bar{V}_{x,j}^\ell g)$, and as $g$ integrates to one, we have
    \begin{equation*}
        \tilde{\nu}_{x,j}^\ell (\bar{V}_{x,j}^\ell g) \le \| \bar{V}_{x,j}^\ell\|_\infty
    \end{equation*}
    so it is enough to estimate the sup-norm on the right-hand side. Under the measure $\tilde{\nu}_{x,j}^\ell$, all the coordinates~$\mathbf{Q}_y$ for $y\in \Lambda_{x,j}^\ell$ are independent and identically distributed (i.i.d.) with law $\nu_{\bar{\mathbf{q}}_x}$, we can use the forthcoming \cref{lemma:conditional_expectation} proved at the end of this section to get that
    \begin{equation*}
        \| \bar{V}_{x,j}^\ell \|_\infty \le \frac{C}{\sqrt{\ell}}
    \end{equation*}
    for some constant $C>0$ that depends only on $\kappa$. 

    \medskip

    If we gather all the previous estimates, we have proved that for any $\gamma >0$, the expectation in the statement satisfies the bound
    \begin{equation*}
        \E_{\mu^N}\left[\int_0^t |\mathbf{Q}_x^\ell (s)-\mathbf{Q}_x^{\varepsilon N}(s)\diff s\right] \le C\left(\frac{1}{\gamma} + \frac{\ell}{\varepsilon N} + \ell \left( \varepsilon +\frac{\ell}{N}\right) + \ell^2 \gamma\varepsilon e^{2\ell (\varepsilon +\ell /N)} + \frac{1}{\sqrt{\ell}}\right)
    \end{equation*}
    uniformly over $x\in\Lambda_N$. Taking the supremum over $x\in\Lambda_N$, then letting $N\to\infty$, $\varepsilon\to 0$ and $\ell\to\infty$, and finally letting $\gamma\to\infty$, the proof of \cref{lemma:two_blocks} is complete.
\end{proof}

During the proof of the two-blocks estimate, we have used the following lemma that allows to perform a long-range particle jump or energy transfer at a cost that is proportional to the distance between the two sites. 

\begin{lemma}[Moving particle-energy lemma]\label{lemma:moving_particle}
    There exists a constant $C=C(\mathbf{q}_\ell ,\mathbf{q}_r,\kappa )>0$ such that for any $x,y\in\Lambda_N$ and any density function $g$ with respect to $\bar{\nu}^N$, we have
    \begin{equation*}
        \bar{J}_{x,y}(g;\bar{\nu}^N) \le C|y-x|\mathfrak{D}_N(g;\bar{\nu}^N)
    \end{equation*}
    where $\bar{J}_{x,y}$ is the extension to the whole space of the Dirichlet form defined in \eqref{eq:def_J_x_yj}. 
\end{lemma}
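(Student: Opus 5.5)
We would prove the moving particle-energy lemma by the classical telescoping argument. We realise each long-range move across the fictitious bond $\{x,y\}$ (say $x<y$) as a composition of nearest-neighbour moves along the path $x,x+1,\dots ,y$, and pay a factor $|y-x|$ through Cauchy--Schwarz. There are three terms in $\bar J_{x,y}$: the particle exchange $\mathfrak{D}^p_{x,y}$ and the two energy transfers $\mathfrak{D}^e_{x\to y}$, $\mathfrak{D}^e_{y\to x}$. We treat them separately.

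\emph{Particle exchange.} The rate $c^p_{x,y}$ is nonzero only when exactly one of the two sites is occupied. Then $\eta^{x,y}$ is obtained from $\eta$ by successive nearest-neighbour swaps: move $\eta_x$ to $y$ via $x\to x+1\to\dots\to y$, then move the displaced value back from $y-1$ to $x$. Writing $\eta^{(0)}=\eta,\dots,\eta^{(k)}=\eta^{x,y}$ with $k\le 2|y-x|$, we get
\[
\big[\sqrt{g(\eta^{x,y})}-\sqrt{g(\eta)}\big]^2\le k\sum_{i}\big[\sqrt{g(\eta^{(i+1)})}-\sqrt{g(\eta^{(i)})}\big]^2 .
\]
Each step $\eta^{(i)}\to\eta^{(i+1)}$ is a nearest-neighbour swap $z,z+1$. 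Such a swap either exchanges two different values, or is trivial. When $\xi_z\neq\xi_{z+1}$ the swap has rate $\kappa-1$; when both sites are occupied, exchanging two particle energies is not a single allowed move. We therefore plan to use a cleaner path: move the single particle (or the hole) through the intermediate sites, so that each elementary step is an exclusion jump across a particle--hole pair. When the intermediate site is occupied by a particle of the same occupation type, we instead transport the relevant energy by unit energy transfers, whose rates $c^e$ are bounded below by a positive constant whenever they are needed. This is the delicate combinatorial point.

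Next we change variables $\eta^{(i)}\mapsto\eta$ in the integral against $\bar\nu^N$. The Jacobian is the ratio $\bar\nu^N(\eta^{(i)})/\bar\nu^N(\eta)$, which is bounded uniformly by a constant $C$: the configurations differ on at most $|y-x|+1$ sites, but by \cref{lemma:reference_measure_lower_bound} all marginals are bounded below, and the product of ratios telescopes to the local ratios at $x,y$. The ratios at intermediate sites are controlled by \cref{lemma:quasi_reversibility} as $(1+C/N)^{|y-x|}\le C$. Each path map is injective for fixed $i$, so summing over $\eta$ gives at most a bounded multiple of $\mathfrak{D}_N(g;\bar\nu^N)$ per step. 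Combining this with the factor $k$ yields $C|y-x|\mathfrak{D}_N$.

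\emph{Energy transfers.} For $\mathfrak{D}^e_{x\to y}$, the rate is nonzero only if both $x,y$ are occupied, $\eta_x\ge2$ and $\eta_y<\kappa$. We propose the path that carries one energy unit along the chain, or, where an intermediate site is empty or unable to receive, first moves the particle at $x$ across the empty stretch by exclusion jumps and then returns it. Concretely, we would first move the particle at $y$ next to $x$ using exclusion and energy-swap steps, perform the nearest-neighbour energy transfer, then undo the path. Every elementary step has rate bounded below by $\min(\kappa-1,1)$ and number of steps $O(|y-x|)$. The same Cauchy--Schwarz and change-of-measure argument then gives the bound.

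The main obstacle is constructing, for every configuration with positive long-range rate, an explicit path of $O(|y-x|)$ allowed nearest-neighbour moves with rates bounded below. We also need to check that, for each fixed step index, the map $\eta\mapsto\eta^{(i)}$ has bounded multiplicity, so that the change of variables costs only a constant. We would handle this by a case analysis on the occupation pattern between $x$ and $y$. The idea is to bubble the relevant particle through blocks of occupied sites by swapping energies pairwise, using unit energy transfers, which costs at most $\kappa$ moves per site. We then insert the resulting $O(\kappa|y-x|)$ steps into the telescoping estimate.
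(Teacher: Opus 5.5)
Your plan is essentially the paper's proof. The paper's key observation is exactly your bubbling step: an adjacent transposition of values $(0,m)$ is one exclusion jump, and of values $(m,n)$ with $m,n\ge 1$ it is $|m-n|\le\kappa-1$ unit energy transfers from the larger to the smaller value, so each long-range move is at most $2|y-x|$ transpositions, i.e.\ at most $2\kappa|y-x|$ allowed moves, and one concludes by Cauchy--Schwarz and the quasi-reversibility of $\bar{\nu}^N$. When writing it up, bound each step by the Dirichlet form of the single bond it uses (each bond is used $O(\kappa)$ times), not by all of $\mathfrak{D}_N(g;\bar{\nu}^N)$ as your wording suggests, since that would give $|y-x|^2$; also index steps by (transposition, sub-step) rather than a global counter, because the number of energy moves depends on $\eta$, and with this indexing each change of variables is at most two-to-one.
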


\begin{proof}
    The idea of the proof is similar to the one of the moving-particle and moving-energy \cite[Lemmas 27 and 28]{da_cunha_stationary_2026}, so we only sketch the proof. The goal is to find a deterministic path of jumps and energy transfers allowed by the dynamics of the EPE, in order to perform a long-range particle jump or energy transfer from site $x$ to site $y$. 

    \medskip

    If the values of two neighbouring sites in the configuration are $0$ and $m\ge 1$, then their values can be exchanged by a one-step particle jump. If the values of two neighbouring sites are $m\ge 1$ and $n\ge 1$, then their values can be exchanged by performing $|m-n|\le\kappa -1$ one-step energy transfers from the larger to the smaller. Therefore, every transposition of the values of two neighbouring sites can be performed in at most $\kappa$ steps allowed by the EPE dynamics.

    \medskip

    If we want to perform a long-range particle jump from $x$ to $y$, \textit{i.e.}~to change a configuration $\eta$ into $\eta^{x,y}$ then we can perform a sequence of neighbouring transpositions to bring $\eta_x$ to site $y$, and then bring $\eta_y$ back to site $x$. This can be done in at most $2|y-x|$ neighbouring transpositions, and each of them can be performed in at most $\kappa$ steps allowed by the EPE dynamics.

    \medskip

    For a long-range energy transfer from $x$ to $y$, \textit{i.e.}~if we want to change $\eta$ into $\eta^{x\to y}$, then we can bring $\eta_x$ to site $y-1$ by a sequence of neighbouring transpositions, then perform a one-step energy transfer from $y-1$ to $y$, and then undo the sequence of neighbouring transpositions to bring $\eta_x-1$ back to site $x$. This can be done in at most $2|y-x|-2$ neighbouring transpositions, plus one energy transfer, and each of them can be performed in at most $\kappa$ steps allowed by the EPE dynamics.

    \medskip

    More precisely, this means that for any of these transformations $\eta\mapsto T\eta$, we can find a sequence $(\eta^{(k)})_{0\le k \le n}$ such that $\eta^{(0)}=\eta$, $\eta^{(n)}=T\eta$, and for any $0\le k \le n(x,y)-1$, there exists a site $z_k$ such that either $\eta^{(k+1)} = (\eta^{(k)})^{z_k,z_k+1}$ with $c_{z_k,z_k+1}^p(\eta^{(k)})>0$, or $\eta^{(k+1)} = (\eta^{(k)})^{z_k\to z_k+1}$ with $c_{z_k\to z_k+1}^e(\eta^{(k)})>0$. Moreover $n=n(x,y)\le 2\kappa|y-x|$. Using the Cauchy-Schwarz inequality to write that
    \begin{equation*}
        [\sqrt{g(T\eta )}-\sqrt{g(\eta )}]^2 \le n \sum_{i=0}^{n-1} \Big[ \sqrt{g(\eta^{(i+1)})}-\sqrt{g(\eta^{(i)})}\Big]^2,
    \end{equation*}
    together with the quasi-reversibility relations in \cref{lemma:quasi_reversibility} satisfied by $\bar{\nu}^N$, we can conclude the proof of \cref{lemma:moving_particle}.
\end{proof}

Finally, we also have used the following lemma that allows to estimate the conditional expectation of the difference of two empirical averages knowing their sum.

\begin{lemma}\label{lemma:conditional_expectation}
    Let $(X_i)_{i\ge 1}$ and $(Y_i)_{i\ge 1}$ be two independent sequences of i.i.d. random variables taking values in a compact subset $K$ of $\R^2$. Define the empirical averages
    \begin{equation*}
        \bar{X}_n = \frac1n\sum_{i=1}^n X_i,\qquad \bar{Y}_n = \frac1n\sum_{i=1}^n Y_i.
    \end{equation*}
    Then, for any $n\ge 1$, we have
    \begin{equation*}
        \big\| \E \big[|\bar{X}_n-\bar{Y}_n| \big| \bar{X}_n+\bar{Y}_n\big]\big\|_\infty \le \frac{2\mathrm{diam}(K)}{\sqrt{2n-1}}.
    \end{equation*}
\end{lemma}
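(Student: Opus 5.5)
The plan rests on an exchangeability argument. Set $S=\bar{X}_n+\bar{Y}_n$ and $D=\bar{X}_n-\bar{Y}_n$. The sequence $(X_1,\hdots ,X_n,Y_1,\hdots ,Y_n)$ is not exchangeable, since the $X$'s and $Y$'s may have different laws. In our application, however, both come from the same homogeneous product measure $\nu_{\bar{\mathbf{q}}_x}$, so I will assume that all $2n$ variables $Z_1,\hdots ,Z_{2n}$ are i.i.d.

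Conditionally on the unordered multiset $\{Z_1,\hdots ,Z_{2n}\}$, the labelling is uniformly random. Hence $D$ has the law of $\frac1n(\sum_{i\in A}z_i-\sum_{i\notin A}z_i)$ with $A$ a uniform $n$-subset of $\{1,\hdots ,2n\}$. The sigma-field generated by $S$ is coarser than the one generated by the multiset, because $S$ is a symmetric function of the $Z_i$. By the tower property and Jensen's inequality,
\begin{equation*}
    \E\big[|D|\,\big|\, S\big] = \E\Big[ \E\big[|D| \,\big|\, \{Z_i\}\big]\,\Big|\, S\Big],\qquad \E\big[|D|\,\big|\,\{Z_i\}\big]\le \sqrt{\E\big[|D|^2\,\big|\,\{Z_i\}\big]}.
\end{equation*}
It therefore suffices to bound the conditional second moment uniformly in the multiset.

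Next I compute this second moment using sampling without replacement. Write $D=\frac1n\sum_i \epsilon_i (z_i-\bar z)$ with $\epsilon_i=\pm1$, exactly $n$ of each sign, and $\bar z$ the mean of the $2n$ values. Centering is allowed because $\sum\epsilon_i=0$. We have $\E[\epsilon_i^2]=1$ and $\E[\epsilon_i\epsilon_j]=-\frac{1}{2n-1}$ for $i\neq j$. Together with $\sum_i(z_i-\bar z)=0$ this gives
\begin{equation*}
    \E\big[|D|^2\,\big|\,\{Z_i\}\big]=\frac{1}{n^2}\Big(1+\frac{1}{2n-1}\Big)\sum_{i=1}^{2n}|z_i-\bar z|^2=\frac{2}{n(2n-1)}\sum_{i=1}^{2n}|z_i-\bar z|^2 .
\end{equation*}
Since $\bar z$ lies in the convex hull of $K$, each term satisfies $|z_i-\bar z|\le \mathrm{diam}(K)$. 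The sum is then at most $2n\,\mathrm{diam}(K)^2$, so the second moment is at most $4\mathrm{diam}(K)^2/(2n-1)$. Taking the square root yields exactly the claimed bound $2\mathrm{diam}(K)/\sqrt{2n-1}$, uniformly over the conditioning, hence in $\|\cdot\|_\infty$.

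The only delicate point is the identical-distribution hypothesis, which the stated lemma leaves implicit but which the application in \cref{lemma:two_blocks} provides. I would state it explicitly; without it the conditional expectation need not be small, for instance when $X_i$ and $Y_i$ are two distinct constants. Everything else is the routine without-replacement variance computation above.
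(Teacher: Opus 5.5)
Your proof is correct and is essentially the paper's argument. The paper likewise concatenates the $2n$ variables and reduces via conditional Cauchy--Schwarz to a conditional second moment. It then uses exchangeability to get the same identity $\E[|D_n|^2\mid \bar Z_n]=\frac{2\,\E[S_n\mid\bar Z_n]}{n(2n-1)}$ with $S_n=\sum_i|Z_i-\bar Z_n|^2\le 2n\,\mathrm{diam}(K)^2$. The only difference is cosmetic: it conditions directly on $\bar Z_n$, whereas you condition on the finer multiset $\sigma$-field and then apply the tower property. Your remark on the hypothesis is also apt: the paper's appeal to exchangeability of $(Z_i)_{1\le i\le 2n}$ tacitly requires the $X_i$ and $Y_i$ to share a common law, which is exactly what holds under $\tilde{\nu}_{x,j}^\ell$ in the two-blocks estimate.
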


\begin{proof}
    Fix $n\ge 1$. We define $(Z_i)_{1\le i\le 2n}$ by $Z_i=X_i$ and $Z_{n+i}=Y_i$ for $1\le i\le n$. If we set~$D_n = \bar{X}_n - \bar{Y}_n$, then we have 
    \begin{equation*}
        D_n = \frac1n\sum_{i=1}^n \varepsilon_iZ_i
    \end{equation*}
    where $\varepsilon_i=1$ and $\varepsilon_{n+i}=-1$ for $1\le i\le n$. Define also the empirical average
    \begin{equation*}
        \bar{Z}_n = \frac{1}{2n}\sum_{i=1}^{2n} Z_i = \frac{\bar{X}_n+\bar{Y}_n}{2}.
    \end{equation*}
    Therefore, in order to estimate the conditional expectation in the statement, we only have to estimate $\E \big[ |D_n| \big| \bar{Z}_n\big]$. Thanks to the conditional Cauchy-Schwarz inequality, it satisfies
    \begin{equation}\label{eq:conditional_Cauchy_Schwarz}
        \E \big[ |D_n| \big| \bar{Z}_n\big] \le \sqrt{\E \big[ |D_n|^2 \big| \bar{Z}_n\big]}.
    \end{equation}
    Let us compute the conditional second moment. For this, set $W_i = Z_i - \bar{Z}_n$ for $1\le i\le 2n$, and also set 
    \begin{equation*}
        S_n = \sum_{i=1}^{2n} |W_i|^2.
    \end{equation*}
    By exchangeability of the random variables $(Z_i)_{1\le i\le 2n}$, we have that
    \begin{equation*}
        \E [W_i|\bar{Z}_n]=0 \qquad\mbox{ and }\qquad \E \big[ |W_i|^2 \big| \bar{Z}_n\big] = \frac{1}{2n}\E [S_n|\bar{Z}_n].
    \end{equation*}
    Again by exchangeability, the conditional expectations $\E [W_i^\dagger W_j|\bar{Z}_n]$ are all equal for $i\neq j$, and we can compute them using the identity
    \begin{equation*}
        0 = \bigg| \sum_{i=1}^{2n} W_i\bigg|^2 = S_n + \sum_{i\neq j}W_i^\dagger W_j,
    \end{equation*}
    which by conditioning on $\bar{Z}_n$ gives that
    \begin{equation*}
        \E [W_i^\dagger W_j|\bar{Z}_n] = -\frac{\E [S_n|\bar{Z}_n]}{2n(2n-1)}.
    \end{equation*}
    We are now ready to compute the conditional second moment of $D_n$. It writes
    \begin{align*}
        \E \big[ |D_n|^2\big| \bar{Z}_n\big] & = \frac{1}{n^2}\sum_{i=1}^{2n} \E \big[ |Z_i|^2\big| \bar{Z}_n\big] + \frac{1}{n^2}\sum_{i\neq j} \varepsilon_i\varepsilon_j \E [Z_i^\dagger Z_j|\bar{Z}_n] \\
        & = \frac{1}{n^2}\sum_{i=1}^{2n} \Big( \E \big[ |W_i|^2\big|\bar{Z}_n\big] + \bar{Z}_n^2\Big) + \frac{1}{n^2}\bigg(\sum_{i\neq j}\varepsilon_i\varepsilon_j\bigg)\left( \E [W_i^\dagger W_j|\bar{Z}_n] + \bar{Z}_n^2\right) \\
        & = \frac2n \E \big[ |W_1|^2\big|\bar{Z}_n\big] - \frac2n \E [W_1^\dagger W_2|\bar{Z}_n] 
    \end{align*}
    where we have used that $\sum_{i\neq j}\varepsilon_i\varepsilon_j = -2n$, and the fact that $W_i$ and $\bar{Z}_n$ are orthogonal. Therefore, using the identities we have computed above, we get that
    \begin{equation*}
        \E \big[ |D_n|^2\big| \bar{Z}_n\big] = \frac{2 \E [S_n|\bar{Z}_n]}{n(2n-1)}.
    \end{equation*}
    But notice that $S_n \le 2n\,\mathrm{diam}(K)^2$ almost surely, so we also have that $\E [S_n|\bar{Z}_n] \le 2n\,\mathrm{diam}(K)^2$ almost surely. If we plug this estimate into \eqref{eq:conditional_Cauchy_Schwarz}, we get the desired result.
\end{proof}

\appendix
\crefalias{section}{appendix}
\section{Generalized Ornstein-Uhlenbeck process}
\label{sec:appendix_OU}

In this appendix, we give a precise definition of the generalized Ornstein-Uhlenbeck process that appears in the statement of \cref{thm:dynamical_fluctuations}, following the theory of martingale problems initiated in \cite{holley_generalized_1978}, and further developed in \cite{kipnis_scaling_1999,bernardin_equilibrium_2022}. Here, we adapt the definition to Ornstein-Uhlenbeck processes with time-dependent coefficients.

\begin{definition}[Generalized Ornstein-Uhlenbeck process]\label{defin:OU}
    Let $\mathcal{C}$ be a topological vector space, let $\mathcal{A}:\mathcal{C}\longrightarrow\mathcal{C}$ be a linear operator letting $\mathcal{C}$ invariant, and let $c : [0,T]\times \mathcal{C}\longrightarrow \R_+$ be a non-negative time-dependent continuous functional satisfying $c_t(\lambda G) = |\lambda | c_t(G)$ for all $\lambda\in\R$, all~$G\in\mathcal{C}$ and all $t\in [0,T]$. Assume also that
    \begin{equation*}
        \forall G\in\mathcal{C},\qquad \int_0^T c_t(G)^2\diff t <\infty.
    \end{equation*}
    Let $\mathcal{C}'$ be the topological dual of $\mathcal{C}$ endowed with the weak-$*$ topology. We say that the $\mathcal{C}'$-valued process~$(Y_t)_{t\in [0,T]}$ with continuous trajectories is a solution to the Ornstein-Uhlenbeck martingale problem $\mathrm{OU}(\mathcal{C},\mathcal{A},c)$ on the time interval $[0,T]$ with (random) initial condition $y_0$ if:
    \begin{enumerate}[label=(\roman*)]
        \item $Y_0=y_0$ in law ;
        \item for any $G\in\mathcal{C}$, the process
        \begin{equation*}
            M_t(G) = Y_t(G) - Y_0(G) - \int_0^t Y_s(\mathcal{A}G)\diff s
        \end{equation*}
        is a mean-zero martingale with respect to the natural filtration of $(Y_t)_{t\in [0,T]}$ and with quadratic variation given by
        \begin{equation*}
            \langle M(G)\rangle_t = \int_0^t c_s(G)^2\diff s.
        \end{equation*}
    \end{enumerate}
\end{definition}

Then, we have the following result which states uniqueness in law of the solution to the Ornstein-Uhlenbeck martingale problem.

\begin{proposition}[Uniqueness of solutions]\label{prop:uniqueness_OU}
    Assume that there exists a semigroup $(P_t)_{t\ge 0}$ on~$\mathcal{C}$ associated to the operator~$\mathcal{A}:\mathcal{C}\longrightarrow\mathcal{C}$ in the sense that
    \begin{equation*}
        P_{t+\varepsilon}G-P_tG = \varepsilon\mathcal{A}P_tG + o_t(\varepsilon )
    \end{equation*}
    where $\varepsilon^{-1}o_t(\varepsilon )$ goes to zero, as $\varepsilon$ goes to zero, in $\mathcal{C}$ uniformly on compact time intervals. Assume also that for all $0\le s\le t\le T$ and all $G\in\mathcal{C}$, we have
    \begin{equation}\label{eq:condition_uniqueness_OU}
        \int_s^t c_r(P_{t-r}G)^2\diff r <\infty.
    \end{equation}
    Then, there exists at most one distribution for the solution to the Ornstein-Uhlenbeck martingale problem $\mathrm{OU}(\mathcal{C},\mathcal{A},c)$ on the time interval $[0,T]$ with (random) initial condition $y_0$.
\end{proposition}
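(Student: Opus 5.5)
The plan is the classical Holley--Stroock argument: show that the conditional characteristic functions of the process are uniquely determined by $\mathcal{A}$, $c$ and the law of $y_0$. Let $(Y_t)$ be any solution of $\mathrm{OU}(\mathcal{C},\mathcal{A},c)$, and fix $G\in\mathcal{C}$ and $t\in[0,T]$.

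\textbf{Step 1: extend the martingale property to time-dependent test functions.} I would show that for the time-dependent function $H_r=P_{t-r}G$, $r\in[s,t]$, the process
\begin{equation*}
    \mathbb{M}_r = Y_r(H_r)-Y_s(H_s)-\int_s^r Y_u\big( (\partial_u+\mathcal{A})H_u\big)\diff u = Y_r(H_r)-Y_s(H_s)
\end{equation*}
is a martingale on $[s,t]$. Its quadratic variation should be $\int_s^r c_u(H_u)^2\diff u$.

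\begin{itemize}
    \item Since $\partial_u H_u=-\mathcal{A}H_u$ by the semigroup assumption, the integral term vanishes, which gives the second equality above.
    \item To justify the martingale property, partition $[s,r]$ into small intervals $u_k<u_{k+1}$. On each one, write $Y_{u_{k+1}}(H_{u_{k+1}})-Y_{u_k}(H_{u_k})$ as the martingale increment $M_{u_{k+1}}(H_{u_{k+1}})-M_{u_k}(H_{u_{k+1}})$ plus a drift term.
    \item The drift term is controlled by the $o_t(\varepsilon)$ expansion, uniformly on compacts, together with continuity of the trajectories of $Y$. Its contribution vanishes as the mesh goes to zero.
    \item The quadratic variation follows by the same discretization, using continuity of $c$ in both variables.
\end{itemize}

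\textbf{Step 2: exponential martingale and conditional Gaussianity.} Since $\mathbb{M}$ has continuous trajectories and deterministic quadratic variation, It\^o's formula shows that
\begin{equation*}
    \exp\Big( i\theta\,\mathbb{M}_r + \tfrac{\theta^2}{2}\int_s^r c_u(H_u)^2\diff u\Big)
\end{equation*}
is a bounded martingale; boundedness holds because the exponent's real part is deterministic and finite by \eqref{eq:condition_uniqueness_OU}. Taking conditional expectations at $r=t$ gives, with $\mathcal{F}_s$ the natural filtration,
\begin{equation*}
    \E\big[ e^{i\theta Y_t(G)}\,\big|\,\mathcal{F}_s\big] = \exp\Big( i\theta Y_s(P_{t-s}G) - \tfrac{\theta^2}{2}\int_s^t c_u(P_{t-u}G)^2\diff u\Big).
\end{equation*}

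\textbf{Step 3: finite-dimensional distributions.} I would proceed by induction on $n$ for times $0\le t_1<\dots<t_n\le T$ and functions $G_1,\dots,G_n\in\mathcal{C}$.

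\begin{itemize}
    \item Condition on $\mathcal{F}_{t_{n-1}}$ and use Step 2. This turns $Y_{t_n}(G_n)$ into $Y_{t_{n-1}}(P_{t_n-t_{n-1}}G_n)$ times a deterministic factor.
    \item By linearity, $Y_{t_{n-1}}(G_{n-1}) + Y_{t_{n-1}}(P_{t_n-t_{n-1}}G_n)$ combines into $Y_{t_{n-1}}$ evaluated at a single test function. This reduces the joint characteristic function from $n$ times to $n-1$ times.
    \item At $n=1$ the formula involves only $Y_0(P_{t_1}G_1)$, whose law is that of $y_0$.
\end{itemize}

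Thus all joint characteristic functions of $(Y_{t_1}(G_1),\dots,Y_{t_n}(G_n))$ are determined. Since $\mathcal{C}'$ carries the weak-$*$ topology, the cylinder $\sigma$-algebra generated by these evaluations determines the law on $C([0,T],\mathcal{C}')$, which gives uniqueness.

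\textbf{Main obstacle.} The delicate point is Step 1: justifying the time-dependent martingale identity and its quadratic variation from the given weak semigroup expansion. This requires controlling the error terms uniformly via continuity of trajectories, and using continuity of $c_t$ in $G$ to pass Riemann sums of $c_{u_k}(H_{u_{k+1}})^2$ to the integral. I would follow the discretization argument of \cite{kipnis_scaling_1999,bernardin_equilibrium_2022}, adapted to time-dependent $c_t$.
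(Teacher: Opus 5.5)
Your proposal is correct and follows essentially the same route as the paper, which (adapting Lemma 7.1 of Bernardin et al.) shows that $\exp\bigl(\mathrm{i}Y_t(P_{S-t}G)+\frac12\int_0^t c_r(P_{S-r}G)^2\,\mathrm{d}r\bigr)$ is a martingale, reads off the conditional Gaussian characteristic function, and concludes via finite-dimensional distributions; you additionally make explicit the intermediate time-dependent martingale and the induction over times. When writing out Step 1, note that weak-$*$ continuity of paths alone does not give uniform control of $Y_{u_k}$ over the compact family $\{\mathcal{A}P_{t-u}G\}_{u}$ and over the remainders $o_t(\varepsilon)$: you also need equicontinuity of $\{Y_v\}_{v\in[0,T]}$, which follows from Banach--Steinhaus when $\mathcal{C}$ is Fréchet, as $\mathcal{S}_\theta$ is.
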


\begin{proof}
    We only sketch the proof of this result. By adapting the proof of \cite[Lemma 7.1]{bernardin_equilibrium_2022}, one gets that for any $S\le T$, the process defined for any $t\in [0,S]$ and any $G\in\mathcal{C}$ by
    \begin{equation}
        Z_t^S(G) = \exp \left( \mathrm{i}Y_t(P_{S-t}G) + \frac12\int_0^t c_r(P_{S-r}G)^2\diff r\right)
    \end{equation}
    is a continuous martingale with respect to the natural filtration $(\mathcal{F}_t)_{t\in [0,T]}$ of $(Y_t)_{t\in [0,T]}$. In particular, for $0\le s\le t\le S\le T$, the equality $\E \big[ Z_t^S(G)\big|\mathcal{F}_s\big] = Z_s^S(G)$ directly implies that 
    \begin{equation*}
        \E \Big[ \exp\big( \mathrm{i}Y_t(P_{S-t}G))\Big|\mathcal{F}_s\Big] = \exp\left( \mathrm{i}Y_s(P_{S-s}G) - \frac12\int_s^t c_r(P_{S-r}G)^2\diff r\right).
    \end{equation*}
    Setting $S=t$ and replacing $G$ by $\lambda G$ for any $\lambda\in\R$, we obtain that
    \begin{equation*}
        \E \Big[ \exp\big( \mathrm{i}\lambda Y_t(G))\Big|\mathcal{F}_s\Big] = \exp\left( \mathrm{i}\lambda Y_s(P_{t-s}G) - \frac{\lambda^2}{2}\int_s^t c_r(P_{t-r}G)^2\diff r\right) .
    \end{equation*}
    This equality shows that conditioning to $\mathcal{F}_s$, the variable $Y_t(G)$ is a Gaussian with mean~$Y_s(P_{t-s}G)$ and variance $\int_s^t c_r(P_{t-r}G)^2\diff r$. Since the distribution at initial time is given, this is enough to characterize all the finite-dimensional distributions of the process $(Y_t)_{t\in [0,T]}$, and therefore its law. 
\end{proof}

We can apply this result to get that the Ornstein-Uhlenbeck problem $\mathrm{OU}(\mathcal{S}_\theta ,D\Delta_\theta ,\|\nabla\cdot\|_{\theta ,\mathbf{q}_t})$ appearing in the statement of \cref{thm:dynamical_fluctuations} has a unique solution in law, which is therefore the limit of the sequence of fluctuation fields $(\mathcal{Y}^N)_{N\ge 1}$.

\section{Boundary matrices of the Robin stationary fluctuations}
\label{appendix:robin_boundary_matrices}

In this appendix, we compute explicitly the boundary matrices $\Xi_\ell$ and $\Xi_r$ appearing in the statement of \cref{thm:stationary_fluctuations} in the Robin case $\theta =1$. During the proof, we have seen that these matrices are respectively given by
\begin{align*}
    & \Xi_\ell \coloneq \Sigma_\ell (\mathbf{q}_\mathrm{ss}(0)) - 2D\chi (\mathbf{q}_\mathrm{ss}(0)) + D\partial_u(\chi (\mathbf{q}_\mathrm{ss}))(0) ,   \\
    &\Xi_r \coloneq \Sigma_r (\mathbf{q}_\mathrm{ss}(1))  - 2D\chi (\mathbf{q}_\mathrm{ss}(1)) - D\partial_u(\chi (\mathbf{q}_\mathrm{ss}))(1) , 
\end{align*}
where $\chi$ is the compressibility matrix given in \cref{defin:compressibility_matrix}, $\Sigma_\ell ,\Sigma_r$ are the boundary matrices defined in \cref{defin:boundary_matrices} and $\mathbf{q}_\mathrm{ss}$ is the stationary profile \eqref{eq:stationary_solution}. We will compute explicitly the matrix $\Xi_\ell$, but the computation of $\Xi_r$ is similar. 

\medskip 
For simplicity, we denote $\mathbf{q}_\mathrm{ss}(0) = (\rho,\mathcal{E})$. We compute each coefficient of matrix. The first coefficient, the one corresponding to the particle-particle correlation, is given by
\begin{align*}
   (\Xi_\ell)_{1,1}(\rho , \mathcal{E}) & = (\Sigma_\ell)_{1,1}(\rho ,\mathcal{E}) - 2D\chi_{1,1}(\rho ,\mathcal{E}) + D\mathbf{q}_\mathrm{ss}'(0)^\dagger \nabla\chi_{1,1}(\rho ,\mathcal{E}) \\
   & = D\big[(1-\rho )\rho_\ell + (1-\rho_\ell )\rho \big] -2D\rho (1-\rho ) + D(\rho -\rho_\ell )(1-2\rho )\\
   & = 0
\end{align*}
where we have used the fact that $\mathbf{q}_\mathrm{ss}'(0) = \mathbf{q}_\mathrm{ss}(0)-\mathbf{q}_\ell$. The second coefficient, corresponding to the particle-energy correlation, is given by
\begin{align*}
   (\Xi_\ell)_{1,2}(\rho , \mathcal{E}) & = (\Sigma_\ell)_{1,2}(\rho ,\mathcal{E}) - 2D\chi_{1,2}(\rho ,\mathcal{E}) + D\mathbf{q}_\mathrm{ss}'(0)^\dagger \nabla\chi_{1,2}(\rho ,\mathcal{E}) \\
   & = D\big[(1-\rho )\mathcal{E}_\ell + (1-\rho_\ell )\mathcal{E} \big] -2D\mathcal{E}(1-\rho ) - D(\rho -\rho_\ell )\mathcal{E} + D(\mathcal{E}-\mathcal{E}_\ell)(1-\rho )\\
   & = 0.
\end{align*}
By symmetry, we also have $(\Xi_\ell)_{2,1}(\rho , \mathcal{E}) = 0$. Let us turn to the last coefficient, corresponding to the energy-energy correlation. For this, we introduce the ratios $\lambda = \frac{\mathcal{E}}{\rho}$ and $\lambda_\ell = \frac{\mathcal{E}_\ell}{\rho_\ell}$, and the function 
\begin{equation*}
    \varphi (\lambda )=\lambda^2+\lambda -1 - \frac{(\lambda -1)^2}{D}
\end{equation*}
so that $m_2(\rho ,\mathcal{E}) = \rho\varphi (\lambda )$ where we recall that $m_2$ is the second moment of $\eta_x$ under $\nu_{\rho ,\mathcal{E}}$. Notice that we have the identity 
\begin{equation*}
    \chi_{2,2}(\rho ,\mathcal{E}) = m_2(\rho ,\mathcal{E})-\mathcal{E}^2 = \rho\varphi (\lambda ) - \rho^2\lambda^2.
\end{equation*}
Then, the last coefficient of the matrix $\Xi_\ell$ is given by
\begin{align*}
    (\Xi_\ell)_{2,2}(\rho , \mathcal{E}) & = (\Sigma_\ell)_{2,2}(\rho ,\mathcal{E}) - 2D\chi_{2,2}(\rho ,\mathcal{E}) + D\mathbf{q}_\mathrm{ss}'(0)^\dagger \nabla\chi_{2,2}(\rho ,\mathcal{E}) \\
    & = D\big[(1-\rho_\ell)\rho\varphi (\lambda ) +(1-\rho )\rho_\ell \varphi (\lambda_\ell )\big] + \rho\rho_\ell \big[ (\lambda_\ell -1)(\kappa -\lambda ) + (\lambda -1)(\kappa -\lambda_\ell )\big] \\
    & \qquad -2D\rho\varphi (\lambda ) + 2D\rho^2\lambda^2 + D(\rho -\rho_\ell )(\varphi (\lambda) - 2\rho\lambda^2) + D\rho_\ell (\lambda -\lambda_\ell )(\varphi '(\lambda )- 2\rho\lambda )
\end{align*}
Plugging the definition of $\varphi$ and simplifying, it reduces to
\begin{equation*}
    (\Xi_\ell)_{2,2}(\rho , \mathcal{E}) = (D-1)(1-\rho )\rho_\ell (\lambda -\lambda_\ell )^2
\end{equation*}
If we insert the value of $(\rho ,\mathcal{E})$ given by the stationary profile $\mathbf{q}_\mathrm{ss}(0)$, namely
\begin{equation*}
    \rho = \frac{2\rho_\ell +\rho_r}{3} \qquad\mbox{ and }\qquad \mathcal{E} = \frac{2\mathcal{E}_\ell +\mathcal{E}_r}{3},
\end{equation*}
we get exactly the expression of $(\Xi_\ell)_{2,2}$ given in \eqref{eq:Xil}.

\medskip

We conclude having a word on the case of the SSEP with slow boundaries ($\theta =1$) for which the stationary fluctuations were investigated in \cite{franco_non-equilibrium_2019}. The stationary fluctuations of this model with a single conservation law must be exactly the ones obtained for the particle density field in the EPE model. Let us check this. If we transpose our notations to the SSEP studied in \cite{franco_non-equilibrium_2019}, the parameters are given by
\begin{equation*}
    \chi (\rho )=\rho (1-\rho ),\qquad D=1,\qquad \Sigma_\ell = (1-\alpha )\rho + \alpha (1-\rho)
\end{equation*}
if $\alpha$ is the density of the left reservoir. In this case, we have that the left boundary term appearing in the stationary fluctuations is given by
\begin{equation*}
    \Xi_\ell (\rho )= \alpha + (1-2\alpha )\rho - 2\rho (1-\rho )+ (\rho -\alpha )(1-2\rho )=0.
\end{equation*}

This contradicts the result of stationary fluctuations obtained in \cite[Theorem 2.8]{franco_non-equilibrium_2019}, where non-zero boundary contributions appear. This is due to an apparent sign error during the proof. Indeed, the boundary coefficient $\Sigma_\ell (\rho )= \alpha  +(1-2\alpha )\rho$ is correctly written in \cite[Equation 2.16]{franco_non-equilibrium_2019}, and it is exactly the expected boundary flip rate. Meanwhile, the proof starts with  \cite[Equation 6.1]{franco_non-equilibrium_2019} where the corresponding boundary coefficient is written as $\Sigma_\ell (\rho ) = \alpha - (1-2\alpha )\rho$. We conclude that the additional boundary terms in \cite[Theorem 2.8]{franco_non-equilibrium_2019} arise from this sign discrepancy, and should be absent, as we have shown in the present work.

\section*{Declarations}
\subsection*{Competing interests}
The author has no competing interests to declare that are relevant to the content of this article.
\subsection*{Data availability}
No datasets were generated or analysed during the current study.
\section*{Acknowledgments}
The author warmly thanks Makiko Sasada for bringing this research problem to his attention. The author also wants to thank Marielle Simon and Clément Erignoux for valuable discussions in the making of this work. In particular Clément Erignoux for his reading of the manuscript and his suggestions to improve it. This project has received financial support from the CNRS through the MITI interdisciplinary programs .

\bibliography{biblio}
\bibliographystyle{plain}

\end{document}